\newtheorem{theorem}{Theorem}[section]
\newtheorem{lemma}[theorem]{Lemma}
\newtheorem{proposition}[theorem]{Proposition}
\newtheorem{corollary}[theorem]{Corollary}
\newtheorem{remark}[theorem]{Remark}
\newtheorem{example}[theorem]{Example}
\newtheorem{examples}[theorem]{Examples}
\theoremstyle{remark}{
}
\theoremstyle{definition}{
\newtheorem{definition}[theorem]{Definition}}
\newcommand{\hh}{{\mathbb{H}}}
\newcommand{\cc}{{\mathbb{C}}}
\newcommand{\rr}{{\mathbb{R}}}
\newcommand{\D}{\mathbb{D}}
\newcommand{\s}{{\mathbb{S}}}
\newcommand{\I}{\mathcal{I}}
\newcommand{\study}{{\mathscr{S}}}
\newcommand{\Q}{{Q}}
\newcommand{\con}{\mathscr{C}}
\newcommand\re{\operatorname{Re}}
\newcommand\im{\operatorname{Im}}
\newcommand{\hslashslash}{%
  \raisebox{.9ex}{%
    \scalebox{.7}{%
      \rotatebox[origin=c]{18}{$-$}%
    }%
  }%
}
\newcommand{\fslash}{%
  {%
   \vphantom{f}%
   \ooalign{\kern.05em\smash{\hslashslash}\hidewidth\cr$f$\cr}%
   \kern.05em
  }%
}
\title{\bf Zeros of slice functions and polynomials over dual quaternions}
\author{Graziano Gentili, Caterina Stoppato, Tomaso Trinci\\
\\
\small Dipartimento di Matematica e Informatica ``U. Dini'', Universit\`a degli Studi di Firenze \\
\small Viale Morgagni 67/A, I-50134 Firenze, Italy\\
\small graziano.gentili@unifi.it, caterina.stoppato@unifi.it, tomaso.trinci90@gmail.com}
\date{  }
\begin{document}

\maketitle

%%%%%%%%%%%%%%%%%%%%%%%%%%%%%%%%%%%%%%%%

\begin{abstract}
This work studies the zeros of slice functions over the algebra of dual quaternions and it comprises applications to the problem of factorizing motion polynomials. The class of slice functions over a real alternative *-algebra $A$ was defined by Ghiloni and Perotti in 2011, extending the class of slice regular functions introduced by Gentili and Struppa in 2006. Both classes strictly include the polynomials over $A$. We focus on the case when $A$ is the algebra of dual quaternions $\D\hh$. The specific properties of this algebra allow a full characterization of the zero sets, which is not available over general real alternative *-algebras. This characterization sheds some light on the study of motion polynomials over $\D\hh$, introduced by Heged{\"u}s, Schicho, and Schr{\"o}cker in 2013 for their relevance in mechanism science.
\end{abstract}

%%%%%%%%%%%%%%%%%%%%%%%%%%%%%%%%%%%%%%%%

%%%%%%%
%\vfill
{\small
\tableofcontents
}
%\vfill
%\newpage
%%%%%%%

%%%%%%%%%%%%%%%%%%%%%%%%%%%%%%%%%%%%%%%%

\thanks{\small \noindent{\bf Acknowledgements.} This work was partly supported by INdAM, through: GNSAGA; INdAM project ``Hypercomplex function theory and applications''. It was also partly supported by MIUR, through the projects: Finanziamento Premiale FOE 2014 ``Splines for accUrate NumeRics: adaptIve models for Simulation Environments''; PRIN 2017 ``Real and complex manifolds: topology, geometry and holomorphic dynamics''. The authors warmly thank the anonymous reviewers for their precious suggestions.}

%%%%%%%%%%%%%%%%%%%%%%%%%%%%%%%%%%%%%%%%

\section{Introduction}\label{sec:introduction}

The concept of \emph{slice regular} function is one of the possible generalizations of the notion of complex analytic function to higher dimensional real algebras. It was introduced over quaternions in~\cite{cras, advances} and over other algebras in~\cite{clifford,rocky} (see also the related work~\cite{israel}). Then the work~\cite{perotti} extended the concept of slice regular function to all real alternative *-algebras and introduced the broader class of \emph{slice} functions. The theory of slice regular functions includes polynomials and it generalizes many classical results of complex analysis. In some cases the generalization is straightforward; in many others, it is more intricate and it reveals a richer environment. This is already evident when studying the zeros of slice regular functions: as proven in~\cite{gpsalgebra}, some form of discreteness of the zero set is always present, but the exact characterization of the zero set is algebra-specific because it depends on the nature of the zero divisors within the algebra.

Deepening the study of the zero sets of slice functions and slice regular functions by focusing on a specific algebra is exactly the purpose of this work. We focus on the algebra $\D\hh$ of dual quaternions, whose set of zero divisors is well understood, not only for its intrinsic interest but also for its applications to mechanism science, see~\cite{hegedus,lischarlerschrocker,lischichoschrocker1,lischichoschrocker2}. These articles introduced \emph{motion polynomials}, which correspond to rational rigid body motions in the Euclidean $3$-space, and studied their factorizations, which correspond to linkages producing the same motions.

The work is organized as follows. Section~\ref{sec:numbers} describes the construction of the algebra $\D\hh$, its properties, and its use to represent the group of proper rigid body transformations. The definitions and basic properties of slice functions and slice regular functions are recalled in Section~\ref{sec:functions}, along with the definition of motion polynomial. Section~\ref{sec:primal} constructs the \emph{primal part} of a slice function over $\D\hh$, extending a known construction over polynomials and providing a tool to fully exploit the peculiarities of the algebra of dual quaternions. The study of the zeros of slice functions over $\D\hh$ is first addressed in Section~\ref{sec:zeros}. It continues in Section~\ref{sec:products}, which focuses on the zeros of products of slice functions, and in Section~\ref{sec:factorization}, which is devoted to slice regular functions. Section~\ref{sec:applications} presents successful applications to the problem of factorizing motion polynomials.

%%%%%%%%%%%%%%%%%%%%%%%%%%%%%%%%%%%%%%%%

\section{The algebras of quaternions and dual quaternions}\label{sec:numbers}
This section presents the algebra of quaternions $\hh$ and the algebra of dual quaternions $\D\hh$, along with relevant actions of their multiplicative groups.

\subsection{Quaternions}
Let $\hh$ denote the real algebra of quaternions. Recall that it is obtained by endowing $\rr^4$ with the multiplication operation defined on the standard basis $\{1,i,j,k\}$ by
\begin{align*}
i^2=j^2&=k^2=-1, \\
ij=-ji=k, \quad jk&=-kj=i, \quad ki=-ik=j,
\end{align*}
and extended by distributivity to all quaternions $q=x_0+x_1i+x_2j+x_3k$. We set 
\[ \re(q)=x_0, \quad \im(q)=x_1i+x_2j+x_3k, \quad |q|=\sqrt{x_0^2+x_1^2+x_2^2+x_3^2}.\]
$\re(q), \im(q)$ and $|q|$ are called the \emph{real part}, the \emph{imaginary part} and the \emph{modulus} of $q$, respectively. The quaternion
\[ q^c=x_0-x_1i-x_2j-x_3k=\re(q)-\im(q)\]
is called the \emph{conjugate} of $q$ and it satisfies
\[|q|=\sqrt{qq^c}=\sqrt{q^cq}.\]
The \emph{inverse} of any element $q\in\hh^*\vcentcolon= \hh\setminus\{0\}$ is given by
\[q^{-1}=\dfrac{q^c}{|q|^2}.\]
Two quaternions $p,q$ commute if, and only if, $\im(p)=c \im(q)$ for some $c \in \rr$. The commutative \emph{center} 
\[\mathcal{Z}(\hh) \vcentcolon= \{x \in \hh\,:\,xh=hx\ \forall\,h \in \hh\}\]
of $\hh$ coincides with $\rr$. Furthermore, for each pair of quaternions $p,q$, the standard scalar product between $p$ and $q$ equals $\frac12(pq^c+qp^c)$; when $p,q$ are orthogonal, i.e., $pq^c+qp^c=0$, we write $p \perp q$. For a more detailed discussion we refer the reader to~\cite{librospringer}.

%%%%%%%%%%%%%%%%%%%%%%%%%%%%%%%%%%%%%%

\subsection{Dual quaternions}

Let $\D\hh$ denote the algebra of dual quaternions, which is an associative algebra over the real field $\rr$ defined as
\[ \D\hh \vcentcolon=\hh+\epsilon \hh, \]
with the following definitions for all $h=h_1+\epsilon h_2,h'=h'_1+\epsilon h'_2$:
\begin{description}
\item[Addition:] $h+h'=(h_1+h'_1)+\epsilon (h_2+h'_2)$;
\item[Multiplication:] $hh'=h_1h'_1+\epsilon(h_1h'_2+h_2h'_1)$.
\end{description}
For each $h=h_1+\epsilon h_2\in\D\hh$, we will refer to $h_1$ as the \emph{primal part} and $h_2$ as the \emph{dual part} of $h$.

\begin{remark}\label{rmk:centerdh}
The center $\mathcal{Z}({\D\hh})$ of $\D\hh$ coincides with the real (commutative) subalgebra $\D\rr\vcentcolon=\rr+\epsilon \rr$, whose elements are called \emph{dual numbers}.
\end{remark}

By construction, $\epsilon^2=0$. As a consequence, we can make the following remark.

\begin{remark}\label{rmk:zerodivisors}
A dual quaternion $h \in \D\hh^*=\D\hh \ \setminus \{0\}$ is a zero divisor if, and only if, $h=\epsilon h_2$, i.e., its primal part vanishes. In other words, the set of zero divisors in $\D\hh$ is $\epsilon \hh^*$ and the set of zero divisors in $\D\rr$ is $\epsilon \rr^*$. In particular, the product of any two zero divisors in $\D\hh$ equals zero.
\end{remark}

Since $\D\hh$ is associative, an element admits a multiplicative inverse if, and only if, it is neither zero nor a zero divisor.

\begin{remark}\label{rmk:inverse}
For every $h \in \D\hh\setminus\epsilon\hh$, the inverse of $h$ is
\[h^{-1}=h_1^{-1}-\epsilon h_1^{-1}h_2h_1^{-1}.\]
\end{remark}

The algebra $\D\hh$ is a *-algebra with a *-involution called \emph{conjugation}. For each $h \in \D\hh$, it is defined as follows.

\begin{description}
\item[Conjugation:] $h^c=h_1^c+\epsilon h_2^c$,
\end{description}
where $h_1^c, h_2^c$ are the standard conjugates of the quaternions $h_1$ and $h_2$.

%%%%%%%%%%%%%%%%%%%%%%%%%%%%%%%%%%%%%%

\subsection{Imaginary units}\label{sec:imunit}

\begin{definition}
For every $h \in \D\hh\supseteq\hh$, the \emph{trace} of $h$ and the (squared) \emph{norm} of $h$ are defined as 
\[t(h)=h+h^c, \quad n(h)=hh^c.\]
\end{definition}

\begin{remark}
Trace and norm of any $h \in \D\hh$ are dual numbers, namely
\begin{align}
 n(h)&=h_1h_1^c+\epsilon(h_1h_2^c+h_2h_1^c) =|h_1|^2+2\epsilon \re(h_1h_2^c), \nonumber \\ 
 t(h)&=(h_1+h_1^c)+\epsilon (h_2+h_2^c)=2\re(h_1)+2\epsilon \re(h_2).  \nonumber
\end{align}
As a consequence, for all $h\in\hh$ the trace $t(h)=2\re(h)$ and the norm $n(h)=|h|^2$ are real numbers.
\end{remark}

Both for $A=\D\hh$ and for $A=\hh$, we call the elements of
\[\s_{A} \vcentcolon =\{h \in A\,:\,t(h)=0\text{, } n(h)=1\}\]
the \emph{imaginary units} of $A$. The set
\[\s_\hh=\{h \in \hh\,:\,\re(h)=0, |h|=1\}\]
is the unit $2$-sphere in the $3$-space of purely imaginary quaternions, while
\[\s_{\D\hh}=\{h_1+\epsilon h_2 \in \D\hh\,:\,h_1\in\s_\hh, h_2\in\im(\hh), h_1\perp h_2\}\]
can be seen as the total space of the tangent bundle over $\s_\hh$. Indeed, any element $J \in \s_{\D\hh}$ splits into a primal part $J_1\in\s_\hh$ and a dual part $J_2$ which equals $mI$ for some $m \in \rr$ and some $I\in\s_\hh$ orthogonal to $J_1$ and we may think of the set
\[T_{J_1}\vcentcolon=\{J_1+\epsilon m I: m \in \rr, I\in\s_\hh, I\perp J_1\}\]
as the tangent plane to $\s_\hh$ at $J_1$.

\begin{remark}
It is well-known that $\s_\hh=\{J\in\hh\,:\,J^2=-1\}$. As a consequence of this fact and of the equality $h^2=h_1^2+\epsilon(h_1h_2+h_2h_1)$, we also have that
\[\s_{\D\hh}=\{J\in\D\hh\,:\,J^2=-1\}\,.\]
\end{remark}

Both for $A=\D\hh$ and for $A=\hh$, the set of elements $x$ that can be uniquely expressed as $x=\alpha+\beta J$ for some $\alpha,\beta \in \rr$, $\beta>0$ and $J \in \s_A$ is the \emph{quadratic cone}
\[\Q_A \vcentcolon = \rr \cup \{h \in A\,:\,t(h),n(h) \in \rr  \text{ and } 4n(h)>t(h)^2 \}.\]
Indeed, it has been proven in~\cite{perotti} that
\[ \Q_A  = \bigcup_{J \in \s_A} \cc_J,\]
where $\cc_J=\rr+\rr J$ is the unitary *-subalgebra generated by $1$ any $J$. Clearly, $\cc_J$ is isomorphic to the complex field and $\cc_I \cap \cc_J = \rr$ for every $I,J \in \s_A$ with $I\neq \pm J$. Moreover, $Q_A$ splits as the disjoint union
\[Q_A\ =\bigcup_{\alpha\in\rr,\beta\geq0} (\alpha + \beta \s_A)\,.\]
In other words, each $x=\alpha+\beta J \in \Q_A$ lies in a unique 
\[\s_x \vcentcolon= \alpha + \beta \s_A\,,\]
which is obtained by real translation and dilation from $\s_A$ when $\beta>0$ and is a singleton $\{\alpha\}$ when $\beta=0$.

It has been proven in~\cite{perotti} that
\[\Q_\hh=\hh\,.\]
Moreover,
\[\Q_{\D\hh}=\rr \cup \{h_1+\epsilon h_2 \in \D\hh\,:\,h_1\in\hh\setminus\rr, h_2\in\im(\hh), h_1\perp h_2\}\,.\]

The sets $\Q_{\D\hh}$ and $\s_{\D\hh}$ have interesting expressions in Cartesian coordinates. Let us consider the standard basis $\{1,i,j,k\}$ of $\hh$. Any dual quaternion can be expressed by
\begin{equation*}
 h=\underbrace{r_0+r_1 i + r_2 j+ r_3 k}_{h_1} + \epsilon (\underbrace{r_4 +  r_5 i+  r_6 j+  r_7 k}_{h_2}),
\end{equation*}
with $r_s \in \rr$ for all $s \in [0,\dots,7]$. The condition $h_1 \perp h_2$ is equivalent to the equation of the \emph{Study quadric}
\begin{equation}
\study^7 : r_0r_4+r_1r_5+r_2r_6+r_3r_7=0\,,
\end{equation}
which will play an important role throughout the paper. The condition $h_2\in\im(\hh)$ is equivalent to the equation $r_4=0$. Finally, $h_1\in\hh\setminus\rr$ if, and only if, $r_1^2+r_2^2+r_3^2 \neq 0$. To sum up, $\Q_{\D\hh}$ is determined by the conditions
\[r_4=0, \quad r_1r_5+r_2r_6+r_3r_7=0, \quad  r_1^2+r_2^2+r_3^2 \neq 0\]
and it has dimension $6$. In particular, the elements of $\s_{\D\hh}$ are determined by the equations
\[ r_0=r_4=0, \quad r_1r_5+r_2r_6+r_3r_7=0, \quad  r_1^2+r_2^2+r_3^2 =1.\]
For future reference, we point out that the intersection between $\study^7$ and the subspace $r_4=0$ is the union
\[(\rr+\epsilon\im(\hh))\cup\Q_{\D\hh}\,,\]
where the first operand is a $4$-space that intersects $\Q_{\D\hh}$ along the axis $\rr$.

%%%%%%%%%%%%%%%%%%%%%%%%%%%%%%%%%%%%

\subsection{Conjugacy}

Let us consider the multiplicative subgroup
\[\D\hh^\times=\D\hh \setminus \epsilon\hh\]
of the algebra $\D\hh$. This group acts on $\D\hh$ as follows.

\begin{definition}
We define
\begin{equation}\label{eq:action}
\con : \D\hh^\times \times \D\hh \longrightarrow \D\hh, \qquad (h,l)\longmapsto \con(h,l)\vcentcolon= h^{-1}lh.
\end{equation}
We will also use the notation $\con_h(l)\vcentcolon =\con(h,l)$.
\end{definition}

Thanks to the associativity of $\D\hh$, it follows immediately that $\con$ is an action. Le us denote by $[a,b]:=ab-ba$ the commutator of any $a,b\in\D\hh$. As a consequence of Remark~\ref{rmk:inverse},
\begin{equation}\label{eq:action2}
\con(h,l)%=(h_1^{-1}-\epsilon h_1^{-1}h_2h_1^{-1})l(h_1+\epsilon h_2)
=h_1^{-1}l_1h_1+\epsilon\left(h_1^{-1}l_2h_1+ [h_1^{-1}l_1h_1, h_1^{-1}h_2]\right).
\end{equation}

\begin{proposition}
For each $h\in \D\hh^\times$, the map $\con_h$ maps $\s_{\D\hh}$ bijectively into itself. As a consequence, for all $x\in Q_{\D\hh}$, the map $\con_h$ maps $\s_x$ bijectively into itself.
\end{proposition}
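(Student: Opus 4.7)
The plan leverages the characterization $\s_{\D\hh}=\{J\in\D\hh\,:\,J^2=-1\}$ supplied by the remark just before the statement, which should reduce the first claim to a one-line associativity computation rather than forcing us to verify $t=0$ and $n=1$ separately from the explicit formula~\eqref{eq:action2}.

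First, I would take any $J\in\s_{\D\hh}$ and compute
\[
(\con_h(J))^2 = (h^{-1}Jh)(h^{-1}Jh) = h^{-1}J^2h = -h^{-1}h = -1,
\]
which forces $\con_h(J)\in\s_{\D\hh}$. For the bijectivity I would invoke the fact, already noted in the excerpt, that $\con$ is a group action of $\D\hh^\times$: since $h\in\D\hh^\times$ implies $h^{-1}\in\D\hh^\times$ (its primal part is $h_1^{-1}\neq 0$ by Remark~\ref{rmk:inverse}), applying the previous step to $h^{-1}$ in place of $h$ shows that $\con_{h^{-1}}$ also restricts to a self-map $\s_{\D\hh}\to\s_{\D\hh}$, and the two restrictions are mutually inverse.

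For the consequence, I would write a generic element of $\s_x$ as $\alpha+\beta K$ with $\alpha,\beta\in\rr$ fixed (depending only on $x$) and $K$ varying over $\s_{\D\hh}$. By Remark~\ref{rmk:centerdh} we have $\rr\subseteq\mathcal{Z}(\D\hh)$, so the scalars $\alpha,\beta$ commute with both $h$ and $h^{-1}$, giving
\[
\con_h(\alpha+\beta K)=h^{-1}(\alpha+\beta K)h=\alpha+\beta\,\con_h(K).
\]
Combined with the first part, this yields $\con_h(\s_x)=\alpha+\beta\,\con_h(\s_{\D\hh})=\alpha+\beta\s_{\D\hh}=\s_x$, bijectively.

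I do not foresee a serious obstacle here. The only strategic point is to resist the temptation to unfold~\eqref{eq:action2} and instead exploit the compact characterization $J^2=-1$, under which the argument collapses to one associativity computation plus a routine group-action inversion for bijectivity, and then a central-scalar factorization for the statement about $\s_x$.
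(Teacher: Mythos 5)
Your proof is correct and follows essentially the same route as the paper: show $\con_h$ restricts to a self-map of $\s_{\D\hh}$, obtain bijectivity from the inverse $\con_{h^{-1}}$, and handle $\s_x$ via the centrality of the real scalars $\alpha,\beta$. The only difference is local and legitimate: you verify membership in $\s_{\D\hh}$ through the characterization $\s_{\D\hh}=\{J\,:\,J^2=-1\}$ (established in the remark preceding the proposition), which collapses the check to one associativity computation, whereas the paper computes $n(h^{-1}Jh)=1$ and $t(h^{-1}Jh)=0$ directly from the *-involution.
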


\begin{proof}
Let us consider an arbitrary element $h\in \D\hh^\times$ and verify that $\con_h(J) \in \s_{\D\hh}$ for every $J \in \s_{\D\hh}$. Recalling the definition of $\s_{\D\hh}$, it is sufficient to evaluate the norm and the trace of the element $h^{-1}Jh$:
\[n(h^{-1}Jh)=(h^{-1}Jh)(h^{-1}Jh)^c=h^{-1}Jhh^c(-J)(h^{-1})^c=n(h)^{-1}{h^c}n(h)n(h)^{-1}h=1, \]
\[t(h^{-1}Jh)=h^{-1}Jh+(h^{-1}Jh)^c=h^{-1}Jh+h^c(-J)(h^{-1})^c=n(h)^{-1}{h^c}Jh+h^c(-J)n(h)^{-1}h=0.\]
Moreover, we observe that $(\con_h)_{|_{\s_{\D\hh}}}:\s_{\D\hh}\to\s_{\D\hh}$ is a bijection with inverse $(\con_{h^{-1}})_{|_{\s_{\D\hh}}}$.
Finally, for all $x=\alpha+\beta J \in \Q_{\D\hh}$ we have that $(\con_h)_{|_{\s_x}}$ is a transformation of $\s_x=\alpha+\beta\s_{\D\hh}$ as a consequence of the equality
\begin{equation}\label{eq:actiononsx}
\con_h(\alpha+\beta I)=h^{-1}(\alpha+\beta I)h=\alpha+\beta h^{-1}Ih =\alpha+\beta \con_h(I)\,,
\end{equation}
valid for all $I\in\s_{\D\hh}$.
\end{proof}

The previous proposition implies that the action $\con$ on $\D\hh$ is not transitive. However, we can prove transitivity on a single $\s_x$.

\begin{proposition}
Fix $x \in Q_{\D\hh}$. The action of $\D\hh^\times$ on $\s_x$ is transitive. If we quotient by $\D\rr^\times =\D\rr\ \setminus \epsilon \rr$ then the action of $\D\hh^\times/\D\rr^\times$ on $\s_x$ is faithful, but not free.
\end{proposition}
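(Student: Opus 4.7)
The plan is to decouple the three claims by exploiting equation~\eqref{eq:actiononsx}, which shows that for $x=\alpha+\beta J\in Q_{\D\hh}$ with $\beta>0$, the action of $\con_h$ on $\s_x=\alpha+\beta\s_{\D\hh}$ is conjugate via the affine map $I\mapsto\alpha+\beta I$ to the action of $\con_h$ on $\s_{\D\hh}$. Hence transitivity, faithfulness, and non-freeness on $\s_x$ all reduce to the corresponding statements for the action of $\D\hh^\times$ on $\s_{\D\hh}$.

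For transitivity, given $I=I_1+\epsilon I_2$ and $J=J_1+\epsilon J_2$ in $\s_{\D\hh}$, I look for $h=h_1+\epsilon h_2\in\D\hh^\times$ with $\con_h(J)=I$. By formula~\eqref{eq:action2} this amounts to solving the two conditions
\[h_1^{-1}J_1h_1=I_1, \qquad h_1^{-1}J_2h_1+[I_1,h_1^{-1}h_2]=I_2.\]
The first is handled by the classical transitivity of the $\hh^*$-action on $\s_\hh$ by conjugation, providing $h_1\in\hh^*$. With $h_1$ fixed, set $v:=I_2-h_1^{-1}J_2h_1$; since conjugation preserves the purely imaginary part and the real scalar product, $h_1^{-1}J_2h_1\in\im(\hh)$ is orthogonal to $I_1=h_1^{-1}J_1h_1$, so $v\in\im(\hh)$ is orthogonal to $I_1$. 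The second condition then reads $[I_1,u]=v$ with $u:=h_1^{-1}h_2$. Writing $u=a+bI_1+u_\perp$ with $a,b\in\rr$ and $u_\perp\in\im(\hh)\cap I_1^\perp$, the anticommutation of orthogonal imaginary quaternions gives $[I_1,u]=2I_1u_\perp$, so $u_\perp:=-\tfrac12 I_1 v$ solves the equation and $h:=h_1+\epsilon h_1u$ is the desired element of $\D\hh^\times$.

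For faithfulness, Remark~\ref{rmk:centerdh} shows $\D\rr^\times\subseteq\mathcal{Z}(\D\hh)$, so $\D\rr^\times$ lies in the kernel of the action. Conversely, if $\con_h$ acts as the identity on $\s_x$, then by~\eqref{eq:actiononsx} it fixes every element of $\s_{\D\hh}$, and in particular every element of $\s_\hh\subseteq\s_{\D\hh}$. The relation $Ih=hI$ for $I\in\s_\hh$, once split into primal and dual parts, yields $Ih_1=h_1 I$ and $Ih_2=h_2I$ for all $I\in\s_\hh$. Since $\mathcal{Z}(\hh)=\rr$, both $h_1$ and $h_2$ lie in $\rr$; together with $h_1\neq 0$ (i.e.\ $h\in\D\hh^\times$) this forces $h\in\D\rr^\times$. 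For non-freeness, pick any $I\in\s_{\D\hh}$: then $\con_I(\alpha+\beta I)=\alpha+\beta I$ exhibits a fixed point of $\con_I$ in $\s_x$, yet $[I]\neq[1]$ in $\D\hh^\times/\D\rr^\times$ because $I$ has non-real primal part.

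The main obstacle is the commutator step in the proof of transitivity: inverting the map $u_\perp\mapsto 2I_1u_\perp$ on $\im(\hh)\cap I_1^\perp$ is the only place where genuinely new algebra beyond bookkeeping is required. All remaining ingredients are standard: the central role of $\mathcal{Z}(\hh)=\rr$, the computation of $\mathcal{Z}(\D\hh)$ from Remark~\ref{rmk:centerdh}, and the transitive conjugation action of $\hh^*$ on $\s_\hh$.
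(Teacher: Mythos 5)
Your proof is correct and follows essentially the same route as the paper: reduce to $\s_{\D\hh}$ via~\eqref{eq:actiononsx}, obtain transitivity by splitting~\eqref{eq:action2} into a primal condition (conjugation on $\s_\hh$) and a dual condition solved through the commutator map (the paper argues surjectivity by a rank-two computation where you invert it explicitly with $u_\perp=-\tfrac12 I_1v$), prove faithfulness by identifying the kernel of the action with the central subgroup $\D\rr^\times$, and exhibit non-freeness through a nontrivial stabilizer. The only differences are cosmetic levels of explicitness, so no further comparison is needed.
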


\begin{proof}
Thanks to equality~\eqref{eq:actiononsx}, it suffices to prove the theses for $\s_{\D\hh}$ (i.e., for $x\in\s_{\D\hh}$).

Let us fix $J\in\s_{\D\hh}$ and prove that
\[h\mapsto \con(h,J)=h_1^{-1}J_1h_1+\epsilon\left(h_1^{-1}J_2h_1 + [h_1^{-1}J_1h_1, h_1^{-1}h_2]\right).\]
is a surjective map $\D\hh^\times\to\s_{\D\hh}$. For any $K_1\in\s_{\hh}$, the equality
\[h_1^{-1}J_1h_1=K_1\]
is fulfilled for $h_1=J_1+K_1$ when $K_1\neq-J_1$; it is fulfilled for any $h_1\in\s_\hh$ with $h_1\perp J_1$ when $K_1=-J_1$. Now we want to prove that, for $K_1\in\s_{\hh}$ fixed and for the aforementioned $h_1\in\hh$, the map 
\[h_2\mapsto h_1^{-1}J_2h_1 + [K_1, h_1^{-1}h_2]\]
from $\hh$ to the tangent plane at $K_1$ to $\s_{\hh}$ in $\hh$ is surjective. To this end, it suffices to prove that the real linear map 
\[h_2\mapsto[K_1, h_1^{-1}h_2]\]
has rank $2$. Let us choose $L_1\in\s_\hh$ with $L_1\perp K_1$ and set $M_1\vcentcolon =K_1L_1=\frac12[K_1,L_1]$. Then the last displayed linear map transforms the vectors $h_1L_1,h_1M_1$ into the linearly independent vectors $[K_1,L_1]=2M_1, [K_1,M_1]=-2L_1$, as desired.
We have thus proven that the action of $\D\hh^\times$ on $\s_{\D\hh}$ is transitive.

This action is clearly not faithful, but it reduces to a faithful action if we quotient $\D\hh^\times$ with its center, which is the maximal normal subgroup of $\D\hh^\times$ included in all stabilizers of the action. As a consequence of the fact that the center of $\D\hh$ is $\D\rr$, we conclude that the center of $\D\hh^\times$ is $\D\rr^\times=\D\rr\setminus\epsilon\rr$.

Finally, we can see that the quotient action is not free by considering the stabilizer of $i$. Indeed, since $ih=hi$ is equivalent to $h\in\D\cc\vcentcolon=\cc+\epsilon\cc$, we conclude that the stabilizer of $i$ equals $\D\cc^\times/\D\rr^\times$, where $\D\cc^\times=\D\cc\setminus\epsilon\cc$.
\end{proof}

The next definition will be useful in the sequel. It extends $\con$ to $\D\hh\times\D\hh$, although the first factor in the Cartesian product is no longer a group.

\begin{definition}
We define
\begin{equation*}
\label{def:actiongen}
\con :\D\hh \times \D\hh \longrightarrow \D\hh, \qquad (h,l)\longmapsto \con(h,l)\vcentcolon=
\begin{cases}
h^{-1}lh & \text{if\ }h\in\D\hh^\times\\
h_2^{-1}lh_2 & \text{if\ }h\in\epsilon\hh^*\\
l & \text{if\ }h=0
\end{cases}
\end{equation*}
 and $\con_h(l):=\con(h,l)$.
\end{definition}

This peculiar extension is justified by the next remarks.

\begin{remark}\label{rmk:conproprerty}
For all $x \in Q_{\D\hh}$, the extended $\con$ maps $\D\hh\times\s_x$ surjectively into $\s_x$.  Moreover, for all $h \in \D\hh$, the equalities $\re(\con_h(x))=\re(x), \im(\con_h(x))=\con_h(\im(x))$ hold.
\end{remark}

\begin{remark}\label{rmk:conproprerty2}
For all $h,l \in \D\hh$, it holds 
\[h\con_h(l)=lh\,.\]
This equality is obvious if $h$ is invertible or $h=0$. If $h\in\epsilon\hh^*$, it follows by direct computation: $h\con_h(l)=\epsilon h_2 (h_2^{-1}l h_2) = l(\epsilon h_2)=lh$. In the last case, we actually have $h\tilde l=lh$ for all $\tilde l\in T_{\con_h(l_1)}$.
\end{remark}

\begin{remark}
Let us fix $l\in\D\hh$. Then $h_2^{-1}lh_2$ is a limit point of $h^{-1}lh$ as $h_1\to0$. Indeed,
\[\lim_{\rr\ni t\to0}\con(th_2+\epsilon h_2,l) = \lim_{\rr\ni t\to0}\left((th_2)^{-1}l(th_2)+\epsilon [(th_2)^{-1}l_1(th_2),(th_2)^{-1}h_2]\right)=h_2^{-1}lh_2\,.\]
Moreover, $l$ is a limit point of $h^{-1}lh$ as $h\to0$ because $\lim_{\rr\ni t\to0}\con(t,l)=\lim_{\rr\ni t\to0}l=l$.
\end{remark}

%%%%%%%%%%%%%%%%%%%%%%%%%%%%%%%%%

\subsection{Euclidean displacement}

It is well-known, see~\cite{daniilidis, selig}, that dual quaternions can be used to represent the group of proper rigid body transformations $SE(3)$. Let us outline this  characterization, starting with a few necessary tools.

First of all, $\rr^3$ can be identified with $1+\epsilon\im(\hh)$ by seeing the vector $(x_1,x_2,x_3)$ as the dual quaternion $1+\epsilon(x_1i+x_2j+x_3k)$. Proper rigid body transformations can be obtained by means of an appropriate action on $1+\epsilon\im(\hh)$ of the following subgroup of the multiplicative group $\D\hh^\times$:
\[G:=\{h\in\D\hh\,:\,n(h)\in\rr^*\}=\{h_1+\epsilon h_2\,:\,h_1,h_2\in\hh,h_1\neq0,t(h_1h_2^c)=0\}=\D\hh^\times\cap\study^7\,.\]
In order to define this action, let us introduce a new *-involution on $\D\hh$ (different from $h\mapsto h^c$):
\begin{description}
\item[Alternate *-involution:] $\tilde h=h_1^c-\epsilon h_2^c$.
\end{description}
Now, to each $h\in G$, we can associate the transformation
\begin{equation}\label{eq:rigidbodytrasnformation}
1+\epsilon x \longmapsto \frac{h\,(1+\epsilon x)\,\tilde h}{n(h)} =1+\epsilon(h_1xh_1^{-1}+2h_2h_1^{-1})\,.
\end{equation}
If we restrict to the case when $t(h)\in\rr$, i.e., $h_2\in\im(\hh)$, we find two special cases.
\begin{description}
\item[Translations.] If $h\in \rr+\epsilon\im(\hh)$, then the previous transformation is a pure translation of $\rr^3$ with translation vector $2h_2h_1^{-1}$. In particular, if $h_2=0$ then the transformation is the identity.
\item[Rotations.] If $h\in \Q_{\D\hh}\setminus\rr$, then the previous transformation is a pure rotation of $\rr^3$. The rotation angle $\theta$ is determined by the equality $\cos(\frac\theta2)=\frac{\re(h_1)}{|h_1|}$. The rotation axis has Pl\"ucker coordinates $\big(\frac{\im(h_1)}{|\im(h_1)|},\frac{h_2}{|\im(h_1)|}\big)$. In particular, if $h=h_1\in\hh\setminus\rr$, then the rotation axis is the line through the origin parallel to $\im(h_1)$. This corresponds to the double covering of $SO(3)$ by means of group of unitary quaternions described in~\cite[Theorem~3.17]{GHS}.
\end{description}
More details can be found in~\cite[Section 4]{daniilidis}.

As proven in~\cite[Chapter 9]{selig}, the isomorphism
\[SE(3)\simeq G/\rr^*\]
holds and the group of dual quaternions $h$ with $n(h)=1$ is a double covering of $SE(3)$.
% Finally, we notice that if $h \in \D\hh^\times$ such that $n(h)=1$, then $h$ belongs to the Study quadric $\study^7$, as we showed at the end of Section~\ref{sec:imunit}.

%%%%%%%%%%%%%%%%%%%%%%%%%%%%%%%%%

\section{The algebra of slice functions}\label{sec:functions}

In this section, we let the symbol $A$ refer to $\hh$ or $\D\hh$ indistinctly. We overview some material from~\cite{perotti,gpsalgebra} concerning the theory of slice functions over the *-algebra $A$.

The *-algebra $A$ can be seen as $2n$-dimensional vector space over $\rr$ for $n=2$ or $n=4$. The left multiplication by a element $J \in \s_A$ induces a complex structure on $A$, thus there exist vectors $e_1^J, \dots ,e_{n-1}^J \in A$ such that the set $\{1,J,e^J_1, Je^J_1,\dots,e^J_{n-1},Je^J_{n-1}\}$ is a real vector basis, called a \emph{splitting basis} of $A$ associated to $J$. We consider on $A$ the natural Euclidean topology and differential structure. The relative topology on each $\cc_J$ with $J \in \s_A$ clearly agrees with the topology determined by the natural identification between $\cc_J$ and $\cc$, through the *-isomorphism
\[\phi_J:\cc \longrightarrow \cc_J, \quad \alpha+\beta i \longmapsto \alpha+ \beta J.\]
Given a subset $D$ of $\cc$, its circularization $\Omega_D$ is defined as the following subset of $\Q_A$:
\[ \Omega_D \vcentcolon=\{x\in \Q_A\ : \ \exists\, \alpha, \beta \in \rr, \exists\, J \in \s_A \text{ s.t. } x=\alpha+\beta J, \alpha + \beta i \in D \}. \]
A subset of $\Q_A$ is termed \emph{circular} if it equals $\Omega_D$ for some $D \subset \cc$. For instance, given $x=\alpha+\beta J \in \Q_A$ the smallest circular set including $x$ is
\[\Omega_{\{x\}} = \s_x = \alpha + \beta \s_A\,.\]
From now on, we assume $D$ to be invariant under complex conjugation $z=\alpha+\beta i \mapsto \alpha - \beta i$. As a consequence, for each $J \in \s_A$ the ``slice'' $\Omega_J \vcentcolon=\Omega \cap \cc_J$ is equivalent to $D$ under the natural identification between $\cc_J$ and $\cc$.

The class of $A$-valued functions we consider was defined in \cite{perotti} by means of the complexified algebra $A_{\cc} =A \otimes_{\rr} \cc =\{x+\iota y \ :  \ x,y \in A, \iota ^2=-1\}$ of $A$, endowed with the following product:
\[(x+\iota y)(x' +\iota y')=xx' - yy' + \iota (xy' +yx').\]
In this setting, $A$ can be found as the subalgebra $A+\iota 0$ of $A_\cc$. An isomorphic copy of $\cc$ can be found as the subalgebra $\rr_\cc=\rr \otimes_{\rr} \cc=\rr + \iota \rr$ of $A_\cc$. Henceforth, we identify $\cc$ (which includes $D$) with $\rr_\cc$.

\begin{remark}
We notice that if $A=\hh$, then $\mathcal{Z}(\hh_\cc)=\rr_\cc$. Similarly, if $A=\D\hh$ then $\mathcal{Z}(\D\hh_\cc)=\D\rr_\cc$.
\end{remark}

In addition to the complex conjugation
\[ \overline{x+\iota y}=x-\iota y, \]
the *-involution on $A$ induces a *-involution on $A_{\cc}$, namely
\[(x+\iota y)^c=x^c+\iota y^c.\]
For all $J \in \s_A$, we can extend the previously defined map $\phi_J: \cc \rightarrow \cc_J$ to
\[\phi_J:A_{\cc} \longrightarrow A, \ \ x+\iota y \longmapsto x+Jy.\]
Let $D$ be a subset of $\cc$ and consider a function
\[F=F_1+\iota F_2: D \longrightarrow A_{\cc}\]
with $A$-valued components $F_1$ and $F_2$. The function $F$ is called a \emph{stem function} on $D$ if $F(\overline{z})=\overline{F(z)}$ for every $z \in D$ or, equivalently, if $F_1(\overline{z})=F_1(z)$ and $F_2(\overline{z})=-F_2(z)$ for every $z \in D$.

\begin{definition}
A function $f:\Omega_D \longrightarrow A$ is called a \emph{(left) slice function} if there exists a stem function $F : D \longrightarrow A_{\cc}$ such that the diagram
\begin{equation}
\begin{CD}
D @>F> >A_\cc\\ % right arrow with labels
@V V \phi_J V %down arrow with labels
@V V \phi_J V\\%down arrow with labels
\Omega_D@>f> >A % right arrow with labels
\end{CD}
\end{equation}
commutes for each $J\in \s_A$. In this situation, we say that $f$ is induced by $F$ and we write $f = \I(F)$. If $F$ is $\rr_{\cc}$-valued, then we say that the slice function $f$ is \emph{slice preserving}. 
\end{definition}

%\begin{remark}
%This construction,  with no modifications, can be rephrased using any real alternative *-algebra instead of $\D\hh$. Among these algebras, only $\hh%$ will further play a role in the paper.
%\end{remark}

The algebraic structure of slice functions is described by the following proposition. A detailed proof can be found in \cite{perotti}.

\begin{proposition}
The stem functions $D\longrightarrow A_{\cc}$ form a *-algebra over $\rr$ with pointwise addition $(F +G)(z) = F (z)+G(z)$, multiplication $(F G)(z) = F (z)G(z)$ and conjugation $F^c(z) = F(z)^c$. This *-algebra is associative and its center includes the subset of $\rr_{\cc}$-valued stem functions. Let $\Omega = \Omega_D$ and consider the mapping
\[ \I:\{ \text{\emph{stem functions on}} \ D \} \longrightarrow \{\text{\emph{slice functions on}} \ \Omega \} =\vcentcolon \mathcal{S}(\Omega) \]
Besides the pointwise addition $(f, g) \mapsto f + g$, there exist unique operations of multiplication $(f,g) \mapsto f \cdot g$ and conjugation $f \mapsto f^c$ on $\mathcal{S}(\Omega)$ such that the mapping $\I$ is a *-algebra isomorphism. The *-algebra $\mathcal{S}(\Omega)$ is associative and its center $\mathcal{Z}(\mathcal{S}(\Omega))$ includes the *-subalgebra $S_{\rr}(\Omega)$ of slice preserving functions.
\end{proposition}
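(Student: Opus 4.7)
The plan is to establish everything on the stem function side first, where pointwise reasoning applies directly, and then transport the *-algebra structure along the map $\I$. Throughout, I will use that complex conjugation on $A_\cc$ is a ring automorphism that commutes with the *-involution $(x+\iota y)^c=x^c+\iota y^c$, and that $\rr_\cc\subseteq \mathcal{Z}(A_\cc)$ (because $r,s\in\rr$ commute with every element of $A$, so $(r+\iota s)(a+\iota b)=(a+\iota b)(r+\iota s)$ by a one-line expansion).

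For the claim on stem functions, I would check directly that the stem function condition is preserved under the pointwise operations: if $F(\bar z)=\overline{F(z)}$ and $G(\bar z)=\overline{G(z)}$, then
\[(FG)(\bar z)=F(\bar z)G(\bar z)=\overline{F(z)}\,\overline{G(z)}=\overline{F(z)G(z)}=\overline{(FG)(z)},\]
and similarly $F^c(\bar z)=\overline{F(z)^c}=\overline{F^c(z)}$. The *-algebra axioms and associativity are then inherited pointwise from $A_\cc$, and the centrality of $\rr_\cc$-valued stem functions is immediate from $\rr_\cc\subseteq\mathcal{Z}(A_\cc)$.

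Next I would verify that $\I$ is a bijection, so that one can transport the structure uniquely. Surjectivity is built into the definition of $\mathcal{S}(\Omega)$. For injectivity, suppose $\I(F)=\I(G)$ with $F=F_1+\iota F_2$ and $G=G_1+\iota G_2$; the commuting diagram at $J$ and at $-J$ gives
\[F_1(z)+JF_2(z)=G_1(z)+JG_2(z),\qquad F_1(z)-JF_2(z)=G_1(z)-JG_2(z),\]
and adding and subtracting (and using that $J$ is invertible in $A$) forces $F_1=G_1$ and $F_2=G_2$. The bijection then forces the unique operations $f\cdot g:=\I(\I^{-1}(f)\cdot\I^{-1}(g))$ and $f^c:=\I(\I^{-1}(f)^c)$, and tautologically makes $\I$ a *-algebra isomorphism. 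Compatibility of the transported addition with the pointwise sum is a one-line check from the $\rr$-linearity of each $\phi_J$: $\I(F+G)(x)=\phi_J(F(z))+\phi_J(G(z))=\I(F)(x)+\I(G)(x)$.

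The remaining assertions — associativity of $\mathcal{S}(\Omega)$ and the inclusion $S_\rr(\Omega)\subseteq\mathcal{Z}(\mathcal{S}(\Omega))$ — then follow automatically by transporting the corresponding facts for stem functions. The only genuinely delicate point underlying the whole setup is the consistency that makes $f=\I(F)$ well-defined independently of the representation $x=\alpha+\beta J=\alpha+(-\beta)(-J)$; this is precisely what the stem function condition $F(\bar z)=\overline{F(z)}$ ensures, and it is presumably taken as built into the definition of $\mathcal{S}(\Omega)$. Everything else is formal transport of structure, so I do not expect any serious obstacle.
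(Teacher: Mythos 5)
Your argument is correct and is essentially the standard proof: verify pointwise that stem functions form an associative *-algebra with $\rr_\cc$-valued ones central, show $\I$ is a bijection (injectivity via evaluating the defining diagram at $J$ and $-J$ and using $J^2=-1$), and then transport the structure, which forces uniqueness of the operations. The paper itself defers the detailed proof to the cited work of Ghiloni and Perotti, and the argument there proceeds along exactly these lines, so there is nothing to add beyond the consistency point you already flag (that the stem condition $F(\bar z)=\overline{F(z)}$ makes $\I(F)$ well defined).
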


The next result describes the centers of the *-algebras of quaternionic and dual quaternionic slice functions.

\begin{proposition}
If $A=\hh$ and $\Omega=\Omega_D \subseteq \hh$, then $\mathcal{Z}(\mathcal{S}(\Omega))=S_{\rr}(\Omega)$. If $A=\D\hh$ and $\Omega=\Omega_D \subseteq \Q_{\D\hh}$, then $\mathcal{Z}(\mathcal{S}(\Omega))$ is the *-subalgebra $S_{\D\rr}(\Omega)$ of slice functions induced by $\D\rr_\cc$-valued stem functions, which properly includes the *-subalgebra  $S_{\rr}(\Omega)$ of slice preserving functions.
\end{proposition}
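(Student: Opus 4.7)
The plan is to transport the statement to stem functions via the *-algebra isomorphism $\I$ from the previous proposition: the center of $\mathcal{S}(\Omega)$ corresponds bijectively to the center of the *-algebra of stem functions $D\to A_\cc$, which carries pointwise operations. Accordingly, I would show that a stem function $F$ is central if and only if $F(z)\in\mathcal{Z}(A_\cc)$ for every $z\in D$.

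The ``if'' direction is immediate, since pointwise products with $\mathcal{Z}(A_\cc)$-valued functions commute. The key step is ``only if'': fix $z_0\in D$ and $a\in A_\cc$, and the idea is to produce a stem function $G$ whose value at $z_0$ is $a$. When $z_0\notin\rr$, I would set $G(z_0)=a$, $G(\overline{z_0})=\overline{a}$, and $G\equiv 0$ on the rest of $D$; this defines a bona fide stem function, and commuting $F$ with it at $z_0$ yields $F(z_0)a=aF(z_0)$, hence $F(z_0)\in\mathcal{Z}(A_\cc)$. When $z_0\in\rr\cap D$, the stem condition already puts $F(z_0)$ in $A$; testing against $A$-valued indicator stem functions supported at $z_0$ shows $F(z_0)\in\mathcal{Z}(A)$, and since $\mathcal{Z}(\hh)=\rr\subset\rr_\cc$ and $\mathcal{Z}(\D\hh)=\D\rr\subset\D\rr_\cc$, in both settings we still land in $\mathcal{Z}(A_\cc)$.

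Combining with the remark that $\mathcal{Z}(\hh_\cc)=\rr_\cc$ and $\mathcal{Z}(\D\hh_\cc)=\D\rr_\cc$, the central stem functions are exactly the $\rr_\cc$-valued ones for $A=\hh$ and exactly the $\D\rr_\cc$-valued ones for $A=\D\hh$, giving respectively $S_\rr(\Omega)$ and $S_{\D\rr}(\Omega)$ under $\I$. For the proper inclusion $S_\rr(\Omega)\subsetneq S_{\D\rr}(\Omega)$, I would simply exhibit the constant slice function $f\equiv\epsilon$, induced by the stem function $F\equiv\epsilon$: it is $\D\rr_\cc$-valued but not $\rr_\cc$-valued.

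The main obstacle I anticipate is the admissibility of the testing stem function $G$ used in the crucial step: it is an ``indicator-like'' map with isolated support, so its legitimacy hinges on stem functions in the sense of~\cite{perotti} being required only to satisfy the conjugation symmetry, with no continuity assumption. If a stronger notion of stem function were in force, one would replace $G$ by a symmetric pair of small bumps around $z_0$ and $\overline{z_0}$ and take a pointwise limit, but the framework set up earlier in the paper does not demand this refinement.
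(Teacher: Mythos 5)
Your proof is correct and follows essentially the paper's route: both pass through the isomorphism $\I$ to the stem-function algebra and identify its center pointwise, using that $\mathcal{Z}(\hh_\cc)=\rr_\cc$ and $\mathcal{Z}(\D\hh_\cc)=\D\rr_\cc$. The only difference is the choice of test functions: the paper commutes $F$ against \emph{constant} $A$-valued stem functions $G\equiv a$ (with vanishing $\iota$-part), which are trivially admissible and render your admissibility worry about indicator-like supports, and the separate treatment of real points, unnecessary.
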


\begin{proof}
Let us prove the statement for $A=\D\hh$: the case $A=\hh$ is well-known and can be proven using the same technique.
Thanks to the previous proposition, it suffices to prove that the center of the *-algebra of stem functions $D\to \D\hh_\cc$ is the *-subalgebra of stem functions $D\to \D\rr_\cc$.

If $F:D\to \D\rr_\cc$ is a stem function, then it commutes with any stem function $G:D\to \D\hh_\cc$ by Remark~\ref{rmk:centerdh}.

Conversely, suppose $F:D\to \D\hh_\cc$ to commute with all stem functions $G:D\to \D\hh_\cc$. In particular, $F$ commutes with all $G=G_1+\iota G_2$ with $G_1\equiv a$ for $a\in\D\hh$ and $G_2\equiv0$. If we fix $z\in D$, it follows that
\[F(z)a=F(z)G(z)=G(z)F(z)=aF(z)\,.\]
By Remark~\ref{rmk:centerdh}, $F(z)\in\D\rr_\cc$, as desired.
\end{proof}

The product $f \cdot g$ of two functions $f, g \in \mathcal{S}(\Omega)$ is called \emph{slice product} of $f$ and $g$. If $f$ belongs to the center of $\mathcal{S}(\Omega)$ then, by direct inspection $(f\cdot g)(x)=f(x)g(x)$ for all $x\in\Omega$. In general to compute $f \cdot g$, one needs instead to compute $FG$ and then $f\cdot g = \I(FG)$. Similarly, for the \emph{slice conjugate} $f^c$ of $f = \I(F)$ we compute $f^c = \I(F^c)$. The \emph{normal function} of $f$ in $\mathcal{S}(\Omega)$ is defined as
\[N(f)=f \cdot f^c =\I(FF^c).\]
Formulae to express the operations on slice functions without computing the corresponding stem functions can be given by means of two further operations. To each $f \in \mathcal{S}(\Omega)$, we associate a function $f_s^\circ : \Omega \longrightarrow A$, called \emph{spherical value} of $f$, and a function $f_s' : \Omega \setminus \rr \longrightarrow A$, called \emph{spherical derivative} of $f$, by setting
\begin{align}
\label{eq:der}
f^\circ_s(x) & \vcentcolon = \frac{1}{2}(f(x)+f(x^c)), \\
\label{eq:val}
f'_s(x) & \vcentcolon = \frac{1}{2} \im(x)^{-1}(f(x)-f(x^c)).
\end{align}
Spherical value and spherical derivative are slice functions, too: if $f=\I(F_1+\iota F_2)$ then $f^\circ_s=\I(F_1)$ and $f'_s=\I(\widetilde F_2)$ with $\widetilde F_2(\alpha+\iota\beta)\vcentcolon=\beta^{-1}F_2(\alpha+\iota\beta)$. Clearly,
\begin{equation}\label{eqn:repr}
f(x)=f^\circ_s(x)+\im(x)f'_s(x)\,.
\end{equation}
What is less obvious, but a consequence of the definition of slice functions, is the fact that $f^\circ_s,f'_s$ are constant on each $\s_x \subseteq \Omega$. As a consequence, $f\in\mathcal{S}(\Omega)$ is uniquely determined by its restriction $f_{|\Omega_J}$ to any slice $\Omega_J$ (with $J\in\s_A$) of its domain.

We are now ready to state the aforementioned formulae for the operations on slice functions (see~\cite{gpsalgebra}): for all $x \in \Omega \setminus \rr$,
\begin{align}
f^c&=(f^c)^\circ_s+\im \ (f^c)'_s; \label{eqn:con} \\
f\cdot g&=\underbrace{f^\circ_sg^\circ_s+ \im^2 \ f'_sg'_s}_{(f \cdot g)^\circ_s} + \im \ \underbrace{(f^\circ_sg'_s+f'_sg^\circ_s)}_{(f \cdot g)'_s}; \label{eqn:prod}\\
N(f)(x)&=\underbrace{n(f^\circ_s(x))+\im(x)^2 n(f'_s(x))}_{N(f)^\circ_s(x)}+ \im(x)\underbrace{t(f^\circ_s(x)f'_s(x)^c)}_{N(f)'_s(x)} \label{eqn:normal}.
\end{align}

\begin{remark}
Consider $h,h' \in A$ then, by direct computation, $n(h)=n(h^c)$ and $t(hh')=t(h'h)$. As a consequence of formula~\eqref{eqn:normal}, for all $f \in \mathcal{S}(\Omega)$ we have that $N(f)=N(f^c)$. Moreover, since $n$ and $t$ take values in the center of $A$, it follows that $N(f) \in \mathcal{Z}(\mathcal{S}(\Omega))$.
\end{remark}

\begin{definition}
A function $f\in\mathcal{S}(\Omega)$ is \emph{tame} if $N(f)=N(f^c)$ is slice preserving.
\end{definition}

For $A=\hh$, all slice functions are tame. For $A=\D\hh$, the tame elements of $\mathcal{S}(\Omega)$ form a proper subset of $\mathcal{S}(\Omega)$, which is closed under multiplication by~\cite[Remark 2.7]{gpsalgebra}.

Within the class of slice functions, we consider a special subclass having nice properties that recall those of holomorphic functions of a complex variable. Suppose $\Omega=\Omega_D$ to be open in $\Q_A$, then for any $J \in \s_A$, $\Omega \cap \cc_J$ is open in the relative topology of $\cc_J$; therefore, $D$ itself is open. We let $\mathcal{S}^0(\Omega)$ and $\mathcal{S}^1(\Omega)$ denote the real vector spaces of slice functions on $\Omega$ induced by continuous stem functions and by stem functions of class $C^1$, respectively. Now consider a function $f=\I(F) \in \mathcal{S}^1(\Omega)$; for $z=\alpha+ \iota \beta$, set
\begin{equation*}
\frac{\partial F}{\partial z}: D \longrightarrow A_{\cc}, \quad \frac{\partial F}{\partial z}\vcentcolon = \frac12\left(\frac{\partial F}{\partial \alpha} - \iota \frac{\partial F}{\partial \beta}\right).
\end{equation*}
\begin{equation*}
\frac{\partial F}{\partial \overline{z}}: D \longrightarrow A_{\cc}, \quad \frac{\partial F}{\partial \overline{z}}\vcentcolon = \frac12\left(\frac{\partial F}{\partial \alpha} + \iota \frac{\partial F}{\partial \beta}\right).
\end{equation*}
Both $\frac{\partial F}{\partial z}$ and $\frac{\partial F}{\partial \overline{z}}$ are still stem functions. They induce the slice functions
\[
\frac{\partial f}{\partial x^c} \vcentcolon = \mathcal{I}\bigg(\frac{\partial F}{\partial \overline{z}}\bigg), \qquad \frac{\partial f}{\partial x} \vcentcolon = \mathcal{I}\bigg(\frac{\partial F}{\partial z}\bigg).
\]

\begin{definition}
Let $\Omega$ be open in $\Q_A$. A slice function $f \in \mathcal{S}^1(\Omega)$ is called \emph{slice regular} if $\frac{\partial f}{\partial x^c}=0$ in $\Omega$. We denote by $\mathcal{SR}(\Omega)$ the real vector space of slice regular functions on $\Omega$. For each $f\in\mathcal{SR}(\Omega)$, the slice regular function $\frac{\partial f}{\partial x}$ is called the \emph{slice derivative} (or complex derivative) of $f$.
\end{definition}

The following lemma explains the connection between slice regularity and complex holomorphy. The proof of this result can be found in \cite{perotti}.

\begin{lemma}\label{lemma:splitting}
Suppose $\Omega=\Omega_D$ to be open in $\Q_A$. Let $J \in  \s_A$ and let $\{1,J,e^J_1, Je^J_1,\dots,e^J_{n-1},Je^J_{n-1}\}$ be an associated splitting basis of $A$. For $f \in \mathcal{S}^1(\Omega)$, let $f_0,\dots,f_{n-1}:\Omega_J \longrightarrow \cc_J$ be the $C^1$ functions such that $f_{|\Omega_J}=\sum_{l=0}^{n-1} f_le^J_l$, where $e^J_0\vcentcolon =1$. Then $f$ is slice regular if, and only if, for each $l \in \{0,\dots,n-1\}$,  $f_l$ is holomorphic from $\Omega_J$ to $\cc_J$, both equipped with the complex structure associated to left multiplication by $J$.
\end{lemma}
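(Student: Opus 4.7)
The plan is to translate both conditions (slice regularity of $f$ and holomorphy of each $f_l$) into the same system of real Cauchy--Riemann equations for the components, read in the splitting basis.

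First, write the stem function as $F=F_1+\iota F_2$ with $F_1,F_2:D\to A$ of class $C^1$. Since $\{1,J,e^J_1,Je^J_1,\dots,e^J_{n-1},Je^J_{n-1}\}$ is a real basis of $A$, each element of $A$ has a unique expression as $\sum_{l=0}^{n-1}z_l\,e^J_l$ with $z_l\in\cc_J$, where $\cc_J$ acts by left multiplication. Accordingly, write
\[
F_1=\sum_{l=0}^{n-1}F_{1,l}\,e^J_l,\qquad F_2=\sum_{l=0}^{n-1}F_{2,l}\,e^J_l,
\]
with $F_{1,l},F_{2,l}:D\to\cc_J$ of class $C^1$. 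The first step I would carry out is to identify these components with the $f_l$: for $z=\alpha+\iota\beta\in D$ and $x=\phi_J(z)=\alpha+\beta J\in\Omega_J$, the defining relation $f(x)=\phi_J(F(z))=F_1(z)+JF_2(z)$ combined with the fact that $J\in\cc_J$ commutes with the $\cc_J$-valued $F_{2,l}$ gives $f(x)=\sum_l(F_{1,l}(z)+JF_{2,l}(z))\,e^J_l$. By uniqueness of the expansion in the splitting basis, this yields the identity
\[
f_l\circ\phi_J \;=\; F_{1,l}+JF_{2,l}\qquad\text{for every }l=0,\dots,n-1,
\]
as $\cc_J$-valued functions on $D$.

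Next, I would compute both differential operators in the splitting basis. On one hand, from the definition of $\partial F/\partial\bar z$ and the expansion of $F=F_1+\iota F_2$, one gets
\[
\frac{\partial F}{\partial\bar z}=\frac12\sum_l\Bigl[\Bigl(\tfrac{\partial F_{1,l}}{\partial\alpha}-\tfrac{\partial F_{2,l}}{\partial\beta}\Bigr)+\iota\Bigl(\tfrac{\partial F_{1,l}}{\partial\beta}+\tfrac{\partial F_{2,l}}{\partial\alpha}\Bigr)\Bigr]e^J_l.
\]
Since $\{e^J_l\}_l$ and $\{\iota e^J_l\}_l$ are $\rr$-linearly independent in $A_\cc$, the vanishing of $\partial F/\partial\bar z$ on $D$ is equivalent to the pair of Cauchy--Riemann equations
\[
\frac{\partial F_{1,l}}{\partial\alpha}=\frac{\partial F_{2,l}}{\partial\beta},\qquad \frac{\partial F_{1,l}}{\partial\beta}=-\frac{\partial F_{2,l}}{\partial\alpha}\qquad\text{for every }l.
\]
On the other hand, holomorphy of $f_l:\Omega_J\to\cc_J$ with respect to left multiplication by $J$ means $\frac12\bigl(\partial f_l/\partial\alpha+J\,\partial f_l/\partial\beta\bigr)=0$; plugging in $f_l\circ\phi_J=F_{1,l}+JF_{2,l}$ and using $J^2=-1$ yields precisely the same system displayed above.

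Comparing the two, slice regularity of $f$ holds if and only if each $f_l$ is holomorphic, as claimed. I do not expect any genuine obstacle here: the proof is essentially a matching of two formulations of the same Cauchy--Riemann system. The only point requiring care is bookkeeping among the three commuting/non-commuting ``imaginary'' objects in play, namely $i\in\cc$, $\iota\in A_\cc$ and $J\in\s_A$; the identity $f_l\circ\phi_J=F_{1,l}+JF_{2,l}$ hinges on $\cc_J$ being commutative, so that left $\cc_J$-multiplication preserves the splitting basis expansion.
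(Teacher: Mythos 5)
Your framework (componentwise Cauchy--Riemann equations in the splitting basis) is sound, and it does prove the implication ``$f$ slice regular $\Rightarrow$ each $f_l$ holomorphic''. The gap is in the converse, at the step ``plugging in $f_l\circ\phi_J=F_{1,l}+JF_{2,l}$ \dots\ yields precisely the same system displayed above''. Holomorphy of $f_l$ is the \emph{single} $\cc_J$-valued equation
\[
\bigl(\partial_\alpha F_{1,l}-\partial_\beta F_{2,l}\bigr)+J\bigl(\partial_\beta F_{1,l}+\partial_\alpha F_{2,l}\bigr)=0\,,
\]
whereas slice regularity requires the two brackets to vanish \emph{separately}. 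Since the brackets are $\cc_J$-valued rather than real, you cannot split this by taking ``real and $J$-parts'': $u+Jv=0$ with $u,v\in\cc_J$ does not force $u=v=0$. So per index $l$ you are comparing two real equations with four, and the purely algebraic matching you assert is false. Concretely, with $A=\hh$, $J=i$, $F_2\equiv0$ and $F_1(\alpha+i\beta)=\beta-i\alpha$, the restriction $F_{1,0}+JF_{2,0}=-i(\alpha+i\beta)$ is holomorphic, yet $\frac{\partial F}{\partial\bar z}=\tfrac12(-i+\iota)\neq0$.

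What rescues the lemma is exactly the stem condition, which your argument never invokes at this point: $F_1(\bar z)=F_1(z)$ and $F_2(\bar z)=-F_2(z)$ (and $D$ is conjugation-invariant), hence, the components being taken with respect to a fixed real basis, $\partial_\alpha F_{1,l}-\partial_\beta F_{2,l}$ is even in $\beta$ and $\partial_\beta F_{1,l}+\partial_\alpha F_{2,l}$ is odd. Writing the displayed equation at $z$ and at $\bar z$ and adding/subtracting (using that $J$ is invertible in $\cc_J$) yields the vanishing of both brackets, i.e.\ slice regularity; my example above is not a stem function, which is why it evades the statement. With this insertion your proof closes. Two minor remarks: pulling $J$ across $F_{2,l}e^J_l$ is an instance of associativity of $\hh$ and $\D\hh$ (not of commutativity of $\cc_J$); and the paper itself does not prove this lemma but quotes it from Ghiloni--Perotti, so a complete self-contained argument like yours, once repaired, is a reasonable substitute.
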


It has been proven in \cite{perotti} that slice regularity is closed under addition, slice multiplication and slice conjugation. Thus, $\mathcal{SR}(\Omega)$ is a *-subalgebra of $\mathcal{S}(\Omega)$.

The next result, also from \cite{perotti}, provides a relevant class of examples of slice regular functions over $\D\hh$, that will be particularly useful throughout the paper.

\begin{proposition}
\label{prop:algpol}
Let $\D\hh[t]$ denote the *-algebra of polynomials $\sum_{n=0}^dt^na_n$ over dual quaternions, with the standard operations
\begin{align*}
&\sum t^na_n+\sum t^nb_n=\sum t^n(a_n+b_n),\\
&\sum t^na_n\cdot\sum t^nb_n=\sum t^n\sum_{\ell=0}^na_\ell b_{n-\ell},\\
&\left(\sum t^na_n\right)^c=\sum t^n a_n^c.
\end{align*}
Let denote $\hh[t]$ the *-subalgebra of quaternionic polynomials. Mapping each polynomial $P(t)$ into the function $P_{|_{\Q_{\D\hh}}}:\Q_{\D\hh}\to\D\hh$ defines an injective *-algebra homomorphism $\D\hh[t]\to\mathcal{SR}(\Q_{\D\hh})$. Moreover, the inclusion $\hh[t]\longrightarrow \mathcal{SR}(\hh)$ is an injective *-algebra homomorphism.
\end{proposition}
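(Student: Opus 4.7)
The plan is to construct, for each $P(t)=\sum_{n=0}^d t^n a_n\in\D\hh[t]$, an explicit stem function $F_P:\cc\to\D\hh_\cc$ whose induced slice function is the restriction $P_{|\Q_{\D\hh}}$, and then verify the required properties directly at the stem-function level. Concretely, I would set $F_P(z):=\sum_{n=0}^d z^n a_n$, where $z=\alpha+\iota\beta$ is viewed as an element of $\rr_\cc\subseteq\D\hh_\cc$, the powers $z^n$ are computed in $\rr_\cc$, and each $a_n$ is identified with $a_n+\iota 0\in\D\hh_\cc$. Since $\rr_\cc\subseteq\D\rr_\cc=\mathcal{Z}(\D\hh_\cc)$, the scalars $z^n$ commute with every $a_n$; moreover the *-involution of $\D\hh_\cc$ fixes elements of $\rr_\cc$ pointwise, so $\overline{z^n a_n}=\bar z^n a_n$ and $F_P$ is genuinely a stem function. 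That $\I(F_P)=P_{|\Q_{\D\hh}}$ then follows by applying $\phi_J$ termwise, using the identity $\phi_J(z\cdot a)=(z_1+Jz_2)\,a$ for $z\in\rr_\cc$, which gives $\phi_J(z^n)=(\alpha+\beta J)^n$.

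Slice regularity is automatic: the map $z\mapsto z^n\in\rr_\cc$ is classically holomorphic, so $\partial(z^n a_n)/\partial\bar z=0$ and hence $\partial F_P/\partial\bar z\equiv 0$. Additivity of $P\mapsto P_{|\Q_{\D\hh}}$ is immediate from the definition of $F_P$. Injectivity follows by restricting to $\rr\subset\Q_{\D\hh}$ and expanding each coefficient in the real basis $\{1,i,j,k,\epsilon,\epsilon i,\epsilon j,\epsilon k\}$: the identity $\sum_n r^n a_n=0$ for every $r\in\rr$ decomposes into eight real polynomial identities, each forcing the corresponding components of the $a_n$ to vanish.

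The main genuine calculation, and the only real obstacle, is multiplicativity: the slice product on $\mathcal{S}(\Q_{\D\hh})$ is \emph{not} the pointwise product, so I must verify that the stem function of the polynomial product $P\cdot Q$ coincides with the product of stem functions $F_PF_Q$ computed in $\D\hh_\cc$. Using once more the centrality of $\rr_\cc$, I would rearrange
\[F_P(z)F_Q(z)=\Bigl(\sum_n z^n a_n\Bigr)\Bigl(\sum_m z^m b_m\Bigr)=\sum_{n,m}z^{n+m} a_n b_m=\sum_k z^k\sum_{\ell=0}^k a_\ell b_{k-\ell}=F_{P\cdot Q}(z),\]
so that $P_{|\Q_{\D\hh}}\cdot Q_{|\Q_{\D\hh}}=\I(F_P F_Q)=\I(F_{P\cdot Q})=(P\cdot Q)_{|\Q_{\D\hh}}$ because $\I$ is a *-algebra isomorphism between stem and slice functions. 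The same centrality, combined with $(z^n a_n)^c=z^n a_n^c$, gives $F_P^c=F_{P^c}$ and hence compatibility with the two *-involutions. The quaternionic inclusion $\hh[t]\hookrightarrow\sr(\hh)$ follows by the identical argument, with $\rr_\cc=\mathcal{Z}(\hh_\cc)$ playing the role of the central subalgebra throughout.
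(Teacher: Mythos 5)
Your proposal is correct and essentially complete. Note that the paper itself offers no proof of this proposition: it is quoted from Ghiloni--Perotti (the reference \cite{perotti}), so there is nothing internal to compare against; your argument is exactly the standard one from that framework, namely exhibiting the polynomial stem function $F_P(z)=\sum_n z^n a_n$ with $z\in\rr_\cc$, using the centrality of $\rr_\cc$ in $\D\hh_\cc$ to get $F_PF_Q=F_{P\cdot Q}$ and $F_P^c=F_{P^c}$, holomorphy of $z\mapsto z^n$ for slice regularity, and evaluation at real points for injectivity. All the key steps are in place, including the one genuinely nontrivial point, that the slice product of the induced functions is induced by the product of stem functions, which you correctly reduce to a rearrangement valid because $z^m$ lies in $\mathcal{Z}(\D\hh_\cc)=\D\rr_\cc$, together with the fact that $\I$ is a *-algebra isomorphism. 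One small slip in wording: to verify the stem-function condition $F_P(\overline z)=\overline{F_P(z)}$ you invoke the *-involution fixing $\rr_\cc$, but the relevant operation is the complex conjugation $x+\iota y\mapsto x-\iota y$, which is a real-algebra automorphism of $\D\hh_\cc$ fixing $\D\hh+\iota 0$; since each $a_n$ lies there and $\overline{z^n}=\overline z^{\,n}$, the identity $\overline{z^na_n}=\overline z^{\,n}a_n$ follows, so the conclusion you state is right even though the cited property is not the one doing the work. The quaternionic case by the same argument, with $\rr_\cc=\mathcal{Z}(\hh_\cc)$, is also fine.
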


If we take into account that $\D\rr$ is both the center of $\D\hh$ and the subspace of the points of $\D\hh$ preserved by conjugation, a direct inspection in the previous definition allows the following remark.

\begin{remark}\label{rmk:evaluation}
Let $P(t),Q(t)\in\D\hh[t]$, let $R(t):=P(t)\cdot Q(t)$ and let us evaluate these three polynomials at a point $h\in\D\hh$. While $(P+Q)(h)=P(h)+Q(h)$, the equalities
\begin{align*}
&R(h)=P(h)Q(h)\,,\\
&P^c(h)=P(h)^c.
\end{align*}
are only guaranteed if $h$ belongs to $\D\rr$ or if $P(t)$ belongs to $\D\rr[t]$.
\end{remark}

As explained in~\cite{hegedus}, polynomials over dual quaternions are particularly relevant for applications in kinematics. Let us sketch the relevant construction. We point out out that we are considering polynomials with coefficients on the right-hand side, as opposed to the convention adopted in~\cite{hegedus}. The polynomials $\sum_{n=0}^dt^na_n$ and $\sum_{n=0}^da_nt^n$ coincide when evaluated at real points, but not when evaluated at other points of $\D\hh$.

\begin{definition}
Consider a polynomial $P(t)=\sum_{n=0}^dt^na_n\in\D\hh[t]$ having degree $d$. The polynomial $N(P)(t):=P(t)\cdot P^c(t)\in\D\rr[t]$ is called the \emph{norm} of $P(t)$. If $N(P)(t)$ belongs to $\rr[t]$ and the leading coefficient $a_d$ of $P(t)$ belongs to $\D\hh^\times$, then $P(t)$ is called a \emph{motion polynomial}.
\end{definition}

If $P(t)$ is a motion polynomial, then we can make the following observations:
\begin{itemize}
\item for each $t_0\in\rr$, the value $h=P(t_0)$ has the property $n(h)=N(P)(t_0)\in\rr$, whence it belongs to the Study quadric $\study^7$;
\item the leading coefficient of $N(P)(t)$ is the $2d$-th coefficient, namely $n(a_d)\in\rr^*$; for any $t_0\in\rr$ that is not a root of $N(P)(t)$, the value $h=P(t_0)$ is an element of $G=\D\hh^\times\cap\study^7$.
\end{itemize}
As a consequence, for any $t_0\in\rr$ that is not a root of $N(P)(t)$ it is possible to consider the proper rigid body transformation~\eqref{eq:rigidbodytrasnformation} with $h=P(t_0)$:
\begin{equation}\label{eq:actionpoly}
1+\epsilon x\longmapsto \frac{P(t_0)\,(1+\epsilon x)\,\widetilde{P(t_0)}}{N(P)(t_0)}.
\end{equation}
If we fix a point, say $1+\epsilon x_0$, then its \emph{trajectory}
\begin{equation}\label{eq:trajectory}
t\longmapsto \frac{P(t)\,(1+\epsilon x_0)\,\widetilde{P(t)}}{N(P)(t)}
\end{equation}
will be a rational curve.

The following remark relates motion polynomials to tame functions and their properties.

\begin{remark}\label{rmk:motionvstame}
Let us consider a slice regular function $f$ defined as $f:=P_{|_{\Q_{\D\hh}}}$ for some $P(t)=\sum_{n=0}^dt^na_n\in\D\hh[t]$ having degree $d$. Then its normal function $N(f)$ coincides with $N(P)_{|_{\Q_{\D\hh}}}$. We can draw the following consequences.
\begin{itemize}
\item $P(t)$ is a motion polynomial if, and only if, $f$ is tame and $a_d\in\D\hh^\times$.
\item If $P(t)$ is a motion polynomial then $f$ cannot be a zero divisor in $\mathcal{S}^0(\Q_{\D\hh})$ by~\cite[Proposition 5.18]{gpsalgebra} and $P(t)$ cannot be a zero divisor in $\D\hh[t]$.
\end{itemize}
\end{remark}

%%%%%%%%%%%%%%%%%%%%%%%%%%%%%

\section{Primal part function}\label{sec:primal}

In this section we associate to each $\D\hh$-valued slice regular function $f$ an $\hh$-valued slice regular function, called the \emph{primal part} of $f$. This notion extends the analogous notion defined for polynomials in \cite{hegedus}. We begin with some preliminary definitions and results.

\begin{definition}
Let $\pi :\D\hh \longrightarrow \hh$ be the function that maps a dual quaternion into its primal part, i.e., $\pi(h)=\pi(h_1+\epsilon h_2)=h_1$.
\end{definition}

\begin{lemma}\label{lemma:homomorphism}
The function $\pi: \D\hh \longrightarrow \hh$ is a surjective real *-algebra homomorphism. Its extension $\pi_\cc:\D\hh_\cc \longrightarrow \hh_\cc$, defined as $\pi_\cc(x+\iota y)=\pi(x)+\iota\pi(y)$, is a surjective complex *-algebra homomorphism. 
\end{lemma}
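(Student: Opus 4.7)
The plan is to verify the two parts of the lemma by direct computation, beginning with $\pi$ and then bootstrapping to $\pi_\cc$ via the universal property of extension by $\cc$-linearity.

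For $\pi:\D\hh\to\hh$, I would check each requirement in turn using the explicit formulas for addition, multiplication, and conjugation on $\D\hh$ given in Section~\ref{sec:numbers}. Additivity and $\rr$-homogeneity are immediate from the componentwise definition of the sum; the identity $\pi(1)=1$ is obvious. For multiplicativity, write $h=h_1+\epsilon h_2$ and $h'=h_1'+\epsilon h_2'$ and apply the dual quaternionic product to obtain $hh'=h_1h_1'+\epsilon(h_1h_2'+h_2h_1')$, whose primal part is exactly $h_1h_1'=\pi(h)\pi(h')$. Preservation of the *-involution follows from the formula $h^c=h_1^c+\epsilon h_2^c$, giving $\pi(h^c)=h_1^c=\pi(h)^c$. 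Finally, surjectivity is clear since every $q\in\hh$ equals $\pi(q+\epsilon\,0)$.

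For the complexified map $\pi_\cc$, the verification is again a direct calculation using the definition of the product and *-involution on $\D\hh_\cc$ recalled in Section~\ref{sec:functions}. Additivity of $\pi_\cc$ is inherited from that of $\pi$. For multiplicativity, expand
\[
(x+\iota y)(x'+\iota y')=(xx'-yy')+\iota(xy'+yx')
\]
and apply $\pi_\cc$ componentwise; since $\pi$ is an $\rr$-algebra homomorphism, the result rearranges exactly to $(\pi(x)+\iota\pi(y))(\pi(x')+\iota\pi(y'))=\pi_\cc(x+\iota y)\pi_\cc(x'+\iota y')$. Preservation of the *-involution $(x+\iota y)^c=x^c+\iota y^c$ reduces to preservation of the *-involution by $\pi$. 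To see $\cc$-linearity, take $a+\iota b\in\rr_\cc$ and compute
\[
\pi_\cc\bigl((a+\iota b)(x+\iota y)\bigr)=(a\pi(x)-b\pi(y))+\iota(a\pi(y)+b\pi(x))=(a+\iota b)\,\pi_\cc(x+\iota y),
\]
using the $\rr$-linearity of $\pi$. Surjectivity of $\pi_\cc$ is then inherited from that of $\pi$ on each of the two real components.

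There is no genuine obstacle here: every claim follows by unpacking definitions and exploiting the componentwise nature of the structural operations on $\D\hh$ and $\D\hh_\cc$. The only thing worth being a little careful about is that the formula $\pi_\cc(x+\iota y)=\pi(x)+\iota\pi(y)$ is well-defined (i.e.\ compatible with the tensor product structure $\D\hh_\cc=\D\hh\otimes_\rr\cc$), which is automatic because $\pi$ is $\rr$-linear, so its extension by $\cc$-linearity is the unique $\cc$-linear map agreeing with $\pi$ on $\D\hh\otimes_\rr 1$.
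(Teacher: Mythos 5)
Your proposal is correct and follows essentially the same route as the paper: a direct componentwise verification that $\pi$ respects sums, products, conjugation and the unit, followed by the analogous computation for $\pi_\cc$ (the paper phrases $\cc$-linearity as $\pi_\cc$ fixing $\rr_\cc$ pointwise, which is equivalent to your check via multiplicativity). No gaps.
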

\begin{proof}
By construction, $\pi$ is a surjective $\rr$-linear map. Moreover, for all $h,l \in \D\hh$, the following equalities hold:
\begin{align*}
\pi (hl) &=\pi (h_1l_1+\epsilon(h_1l_2+h_2l_1))=h_1l_1=\pi (h) \pi(l), \\
\pi (h^c)&= \pi (h_1^c+\epsilon h_2^c) = h_1^c= \pi (h)^c, \\
\pi (1)&=1.
\end{align*}
Thus, $\pi$ is a real *-algebra homomorphism. Moreover,
\begin{align*}
\pi_\cc((x+\iota y)+(x'+\iota y'))&=\pi(x+x')+\iota \pi(y+y')\\
&=(\pi(x)+\iota\pi(y))+(\pi(x')+\iota\pi(y'))\\
&=\pi_\cc(x+\iota y)+\pi_\cc(x'+\iota y')\,,\\
\pi_\cc((x+\iota y)(x'+\iota y'))&=\pi(xx'-yy')+\iota \pi(xy'+yx')\\
&=\pi(x)\pi(x')-\pi(y)\pi(y')+\iota \pi(x)\pi(y')+\iota\pi(y)\pi(x')\\
&=(\pi(x)+\iota\pi(y))(\pi(x')+\iota\pi(y'))\\
&=\pi_\cc(x+\iota y)\pi_\cc(x'+\iota y')\,,\\
\pi_\cc((x+\iota y)^c)&=\pi(x^c)+\iota\pi(y^c)\\
&=\pi(x)^c+\iota\pi(y)^c\\
&=\pi_\cc(x+\iota y)^c\,,\\
\pi_\cc(x+\iota y)&=x+\iota y\quad\mathrm{if\ }x,y\in\rr\,.
\end{align*}
Thus, $\pi_\cc$ is a surjective complex *-algebra homomorphism.
\end{proof}

We can easily study the effect of $\pi$ on the the map $\con$ of Definition~\ref{def:actiongen}.

\begin{remark}
Let $h,l \in \D\hh$. If $h$ is invertible, equality~\eqref{eq:action2} implies that
\[\pi(\con(h,l))=\con(h_1,l_1)=\con(\pi(h),\pi(l))\,.\]
If $h\in\epsilon\hh^*$, then
\[\pi(\con(h,l))=\pi(h_2^{-1}l h_2)= h_2^{-1}l_1 h_2=\con(h,\pi(l))\,.\]
If $h=0$ then both of the previous formulae are true.
\end{remark}

\begin{proposition}
For each stem function $F: D \longrightarrow \D\hh_\cc$, the function ${}^\pi \! F\vcentcolon=\pi_\cc\circ F$ is a stem function
\[{}^\pi \! F: D \longrightarrow \hh_\cc.\]
Moreover, if $F$ is holomorphic then ${}^\pi \! F$ is holomorphic too. Finally, the map $F\mapsto {}^\pi \! F$ is a *-algebra homomorphism.
\end{proposition}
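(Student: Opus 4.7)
The plan is to reduce all three assertions to the properties of $\pi_\cc$ already established in Lemma~\ref{lemma:homomorphism}, since the operations on stem functions are defined pointwise and composition with $\pi_\cc$ should transfer each relevant property. The only non-immediate point to check is that $\pi_\cc$ is compatible with the complex conjugation $x+\iota y\mapsto x-\iota y$ on $A_\cc$, but this is a direct consequence of the very definition $\pi_\cc(x+\iota y)=\pi(x)+\iota\pi(y)$.

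First I would verify that ${}^\pi\!F$ is itself a stem function. Since $\pi_\cc(\overline{x+\iota y})=\pi_\cc(x-\iota y)=\pi(x)-\iota\pi(y)=\overline{\pi_\cc(x+\iota y)}$, the map $\pi_\cc$ intertwines the two complex conjugations. The stem function condition $F(\bar z)=\overline{F(z)}$ then yields ${}^\pi\!F(\bar z)=\pi_\cc(\overline{F(z)})=\overline{\pi_\cc(F(z))}=\overline{{}^\pi\!F(z)}$.

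Next I would handle holomorphy. The map $\pi_\cc$ is $\rr$-linear because $\pi$ is, and it satisfies $\pi_\cc(\iota w)=\iota\pi_\cc(w)$ directly from its definition; hence $\pi_\cc$ is $\cc$-linear (in particular continuous), so it commutes with the real partial derivatives with respect to the real coordinates $\alpha,\beta$ of $z=\alpha+\iota\beta$. This allows one to push $\pi_\cc$ through the Wirtinger derivative, giving
\[\frac{\partial\,{}^\pi\!F}{\partial\bar z}=\tfrac12\bigl(\partial_\alpha{}^\pi\!F+\iota\,\partial_\beta{}^\pi\!F\bigr)=\pi_\cc\circ\frac{\partial F}{\partial\bar z},\]
which vanishes whenever $F$ is holomorphic.

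Finally, the *-algebra homomorphism property is immediate from Lemma~\ref{lemma:homomorphism} applied pointwise: for each $z\in D$,
\begin{align*}
{}^\pi\!(F+G)(z)&=\pi_\cc(F(z)+G(z))=\pi_\cc(F(z))+\pi_\cc(G(z))={}^\pi\!F(z)+{}^\pi\!G(z),\\
{}^\pi\!(FG)(z)&=\pi_\cc(F(z)G(z))=\pi_\cc(F(z))\pi_\cc(G(z))={}^\pi\!F(z)\,{}^\pi\!G(z),\\
{}^\pi\!(F^c)(z)&=\pi_\cc(F(z)^c)=\pi_\cc(F(z))^c={}^\pi\!F(z)^c,
\end{align*}
and ${}^\pi\!F$ maps into $\rr_\cc$ whenever $F$ does, since $\pi_\cc$ restricts to the identity on $\rr_\cc$. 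No step presents a real obstacle: the statement is essentially a transfer principle, and all the conceptual content has already been carried out in Lemma~\ref{lemma:homomorphism}.
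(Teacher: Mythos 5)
Your proposal is correct and follows essentially the same route as the paper: verify the stem-function condition via the compatibility of $\pi_\cc$ with the two complex conjugations, deduce the homomorphism property pointwise from Lemma~\ref{lemma:homomorphism}, and obtain holomorphy from the $\cc$-linearity of $\pi_\cc$, which lets it commute with the Wirtinger derivative so that $\frac{\partial\,{}^\pi\!F}{\partial\bar z}=\pi_\cc\circ\frac{\partial F}{\partial\bar z}$. No gaps; the extra observation that ${}^\pi\!F$ is $\rr_\cc$-valued when $F$ is goes slightly beyond what the statement requires but is harmless.
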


\begin{proof}
By definition, for every $z \in D$
\[{}^\pi \! F(\overline{z})=\pi_\cc (F(\overline{z}))=\pi_\cc\big(\overline{F(z)}\big)=\pi(F_1(z))-\iota\pi(F_2(z))=\overline{\pi_\cc (F(z))}=\overline{{}^\pi \! F(z)}\,.\]
Thus, ${}^\pi \! F$ is a stem function. The fact that $F\mapsto {}^\pi \! F$ is a *-algebra homomorphism follows at once from the previous lemma.

By applying again the previous lemma, we observe that $\pi_\cc$ is a $\cc$-linear map, whence it coincides with its differential. Thus, if $F$ is holomorphic then
\[\frac{\partial {}^\pi \! F}{\partial \overline{z}}=\pi_\cc \circ \frac{\partial F}{\partial \overline{z}}\equiv0\,,\]
whence ${}^\pi \! F$ is a holomorphic function, too.
\end{proof}

The previous result allows us to give the next definition and to derive the subsequent corollary.

\begin{definition}
Let $\Omega=\Omega_D \subseteq \Q_{\D\hh}$ and let $f=\mathcal{I}(F)$ be a function in $\mathcal{S}(\Omega)$. We define the \emph{primal part} of $f$ as the quaternionic slice function ${}^\pi \! f \vcentcolon=\mathcal{I}({}^\pi \! F)\in\mathcal{S}(\Omega\cap\hh)$.
\end{definition}

\begin{corollary}\label{cor:piomo}
Let $\Omega=\Omega_D \subseteq \Q_{\D\hh}$. The map
\begin{align*}
\mathcal{S}(\Omega)&\to\mathcal{S}(\Omega\cap\hh)\\
f &\mapsto {}^\pi \! f
\end{align*}
is a *-algebra homomorphism. If $\Omega$ is open in $\Q_{\D\hh}$, the homomorphism maps slice regular functions into slice regular functions.
\end{corollary}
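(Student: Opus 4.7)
The proof is essentially an unpacking of the definitions: the map $f \mapsto {}^\pi\! f$ is built by pre- and post-composing the stem-function operation $F \mapsto {}^\pi\! F$ with the isomorphism $\mathcal{I}$, and both of these ingredients have already been shown to have the desired structural properties.

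More concretely, I would proceed as follows. For fixed $\Omega = \Omega_D \subseteq \Q_{\D\hh}$, the proposition on the algebraic structure of slice functions gives a $*$-algebra isomorphism
\[
\mathcal{I}_{\D\hh} : \{\text{stem functions } D \to \D\hh_\cc\} \longrightarrow \mathcal{S}(\Omega),
\]
and similarly a $*$-algebra isomorphism $\mathcal{I}_\hh$ from stem functions $D \to \hh_\cc$ to $\mathcal{S}(\Omega \cap \hh)$ (noting that the circularization of $D$ in $\Q_\hh = \hh$ coincides with $\Omega \cap \hh$, since $\s_\hh \subset \s_{\D\hh}$). By the proposition immediately preceding the corollary, the assignment $F \mapsto {}^\pi\! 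F = \pi_\cc \circ F$ is a $*$-algebra homomorphism from the first stem-function algebra to the second. By definition, ${}^\pi\! f = \mathcal{I}_\hh({}^\pi\! F)$ whenever $f = \mathcal{I}_{\D\hh}(F)$, so $f \mapsto {}^\pi\! f$ is the composition $\mathcal{I}_\hh \circ ({}^\pi\!\,\cdot\,) \circ \mathcal{I}_{\D\hh}^{-1}$ of three $*$-algebra homomorphisms and is therefore itself a $*$-algebra homomorphism.

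For the second statement, assume $\Omega$ is open in $\Q_{\D\hh}$, so that $D$ is open in $\cc$ and $\Omega \cap \hh$ is open in $\Q_\hh = \hh$. Let $f \in \mathcal{SR}(\Omega)$ and write $f = \mathcal{I}_{\D\hh}(F)$ with $F \in C^1$. Then slice regularity of $f$ is equivalent to holomorphy of $F$, i.e., $\partial F / \partial \overline{z} \equiv 0$. The same proposition also asserts that $F \mapsto {}^\pi\! F$ preserves holomorphy (since $\pi_\cc$ is $\cc$-linear, $\partial({}^\pi\! F)/\partial\overline z = \pi_\cc \circ \partial F/\partial \overline z \equiv 0$). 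Consequently ${}^\pi\! F$ is a holomorphic stem function on $D$, and so ${}^\pi\! f = \mathcal{I}_\hh({}^\pi\! F) \in \mathcal{SR}(\Omega \cap \hh)$.

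The only point requiring a moment of care is the identification of the target domain of ${}^\pi\! f$ as the $\hh$-circularization of $D$; everything else is a direct transfer of previously established facts through the isomorphism $\mathcal{I}$, so I do not expect a serious obstacle in this proof.
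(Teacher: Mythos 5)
Your proposal is correct and follows essentially the same route as the paper, which derives the corollary directly from the preceding proposition (that $F \mapsto {}^\pi\! F$ is a *-algebra homomorphism preserving holomorphy) combined with the *-algebra isomorphism $\mathcal{I}$ between stem functions and slice functions; the paper does not even spell out the composition argument, so your write-up is simply a more explicit version of the intended proof. Your care in identifying $\Omega \cap \hh$ with the $\hh$-circularization of $D$ is a valid and harmless addition.
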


The entire construction is designed to satisfy the following property:

\begin{remark}\label{rmk:primal}
Let $\Omega=\Omega_D \subseteq \Q_{\D\hh}$ and consider a function $f \in \mathcal{S}(\Omega)$, induced by the stem function $F$. For all $x=\alpha+\beta J$ (whence $x_1=\alpha+\beta J_1$) and for $z=\alpha+\iota \beta$, it holds
\[\pi(f(x))=\pi (F_1(z))+ J_1 \pi (F_2(z))={}^\pi \! f(x_1).\]
As a consequence,
\begin{align*}
\pi(f^\circ_s(x))&=({}^{\pi}\!f)_s^\circ(x_1), \\
\pi(f'_s(x))&=({}^{\pi}\!f)_s'(x_1)\,.
\end{align*}
\end{remark}

The previous remark will be useful in Section~\ref{sec:zeros} to relate the zeros of a slice function over dual quaternions with the zeros of its primal part.

\begin{remark}
We can repeat the construction by using any *-algebra homomorphism $\psi$ instead of $\pi$, indeed
\[ \psi (f(\alpha + J \beta))=\psi(F_1(z)+JF_2(z)) = \psi (F_1(z))+\psi(J) \psi (F_2(z)),\]
where $\psi(J) \in \s_{\D\hh}$ because $n(\psi(J))=\psi(n(J))=\psi(1)=1$ and $t(\psi(J))=\psi(t(J))=\psi(0)=0$. In our case $\psi=\pi$, the map $J\mapsto\psi(J)$ for $J\in\s_{\D\hh}$ is the natural projection of $\s_{\D\hh}$ onto $\s_{\hh}$.
\end{remark}

Let us relate our construction with the concept of primal part of a polynomial considered in \cite{hegedus} (again, with a different convention about the side of the coefficients).

\begin{definition}
The map $\mathrm{primal}:\D\hh[t]\to\hh[t]$ is defined as
\[\mathrm{primal}\left(\sum_{n=0}^dt^na_n\right)\vcentcolon=\sum_{n=0}^dt^n \pi(a_n)\,.\]
\end{definition}

\begin{remark}\label{rmk:primalevaluation}
Let us consider $P(t)\in\D\hh[t]$ and evaluate it at $h=h_1+\epsilon h_2\in\D\hh$. Then
\[\pi(P(h))=\mathrm{primal}(P)(h_1)\,.\]
\end{remark}

\begin{proposition}
If $f\in\mathcal{S}(\Q_{\D\hh})$ is defined as $f:=P_{|_{\Q_{\D\hh}}}$ for some $P(t)\in\D\hh[t]$, then ${}^\pi \! f=\mathrm{primal}(P)$. As a consequence, $\mathrm{primal}:\D\hh[t]\to\hh[t]$ is a *-algebra homomorphism. 
\end{proposition}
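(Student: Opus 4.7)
The plan is a two-step argument. First, I would verify the functional identity \({}^\pi\!f=\mathrm{primal}(P)\) on \(\Q_{\D\hh}\cap\hh=\hh\) by direct pointwise comparison. Then I would leverage this identity, together with the homomorphism statements already proven in Proposition~\ref{prop:algpol} and Corollary~\ref{cor:piomo}, to conclude that \(\mathrm{primal}\) is a *-algebra homomorphism.

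For the first step, fix \(P(t)\in\D\hh[t]\), set \(f=P_{|_{\Q_{\D\hh}}}\), and view \(\mathrm{primal}(P)\) as a slice regular function on \(\hh\) through Proposition~\ref{prop:algpol}. Both \({}^\pi\!f\) and \(\mathrm{primal}(P)\) are then slice functions on \(\hh\), so their equality as slice functions reduces to pointwise equality as functions. For every \(x_1\in\hh\subseteq\Q_{\D\hh}\), Remark~\ref{rmk:primal} applied with \(x=x_1\) yields \({}^\pi\!f(x_1)=\pi(f(x_1))=\pi(P(x_1))\), while Remark~\ref{rmk:primalevaluation} gives \(\pi(P(x_1))=\mathrm{primal}(P)(x_1)\). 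Chaining these equalities concludes the first step.

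For the second step, Proposition~\ref{prop:algpol} supplies the injective *-algebra homomorphisms \(\Phi\colon\D\hh[t]\to\mathcal{SR}(\Q_{\D\hh})\), \(P\mapsto P_{|_{\Q_{\D\hh}}}\), and \(\Psi\colon\hh[t]\to\mathcal{SR}(\hh)\), \(Q\mapsto Q_{|_\hh}\); Corollary~\ref{cor:piomo} supplies the *-algebra homomorphism \({}^\pi\colon\mathcal{SR}(\Q_{\D\hh})\to\mathcal{SR}(\hh)\). The identity from the first step translates to the commutativity \({}^\pi\circ\Phi=\Psi\circ\mathrm{primal}\). The left-hand side is a composition of *-algebra homomorphisms, hence is itself a *-algebra homomorphism; since \(\Psi\) is injective, a standard cancellation argument transfers each structural property back to \(\mathrm{primal}\). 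For instance, from \(\Psi(\mathrm{primal}(P\cdot Q))=\Psi(\mathrm{primal}(P))\cdot\Psi(\mathrm{primal}(Q))=\Psi(\mathrm{primal}(P)\cdot\mathrm{primal}(Q))\) and the injectivity of \(\Psi\) one deduces \(\mathrm{primal}(P\cdot Q)=\mathrm{primal}(P)\cdot\mathrm{primal}(Q)\), and the remaining structure maps (addition, unit, conjugation) are handled identically. The argument is essentially bookkeeping and I foresee no genuine obstacle; the only point requiring care is the consistent use of the identifications between polynomials in \(\D\hh[t]\) or \(\hh[t]\) and their induced slice regular functions, which are harmless precisely because of the injectivity asserted in Proposition~\ref{prop:algpol}.
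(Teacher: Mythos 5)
Your proposal is correct and follows essentially the same route as the paper: the paper also establishes ${}^\pi\!f(w)=\mathrm{primal}(P)(w)$ pointwise on $\hh$ (via Remark~\ref{rmk:primal} and the homomorphism property of $\pi$, which is exactly the content of Remark~\ref{rmk:primalevaluation} you invoke) and then deduces the second statement from Proposition~\ref{prop:algpol} and Corollary~\ref{cor:piomo}, precisely the injectivity/cancellation bookkeeping you spell out. The only cosmetic difference is that the paper phrases the first step through the surjectivity of $\pi$ rather than evaluating directly at points of $\hh\subseteq\Q_{\D\hh}$, which is an equivalent formulation.
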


\begin{proof}
If $P(t)=\sum_{n=0}^dt^na_n$ then $f(x)=\sum_{n=0}^dx^na_n$. By Lemma~\ref{lemma:homomorphism} and Remark~\ref{rmk:primal}, for all $x\in\D\hh$ it holds
\[{}^\pi \! f(\pi(x))=\pi(f(x))=\sum_{n=0}^d\pi(x)^n\pi(a_n)\,.\]
Since $\pi:\D\hh\to\hh$ is surjective, we conclude that
\[{}^\pi \! f(w)=\sum_{n=0}^dw^n\pi(a_n)=\mathrm{primal}(P)(w)\]
for all $w\in\hh$, which is our first statement.

The second statement now follows from Proposition~\ref{prop:algpol} and Corollary~\ref{cor:piomo}.
\end{proof}

\begin{remark}
\label{rkm:motionnorm}
If $P(t)\in\D\hh[t]$ is a motion polynomial of degree $d$, then $\mathrm{primal}(P)(t)\in\hh[t]$ has degree $d$. The norm of $\mathrm{primal}(P)(t)$ is a $2d$-degree real polynomial and it coincides with $N(P)(t)$.
\end{remark}

%%%%%%%%%%%%%%%%%%%%%%%%%%%%%

\section{Zeros of slice functions}\label{sec:zeros}

In this section, we describe some algebraic and geometric properties of the zero set
\[V(f)\vcentcolon=\{x\in \Omega\,:\,f(x)=0\}\]
of a slice function $f \in \mathcal{S}(\Omega)$ with $\Omega=\Omega_D$. Moreover, we study how this set is related to the zero sets of $f^c, {}^\pi \! f$ and $N(f)$.

Before proceeding towards the main results, let us establish two useful equalities.

\begin{lemma}\label{lemma:conjugate}
Let $f \in \mathcal{S}(\Omega)$. For all $x\in\Omega\cap\rr$, it holds $f^c(x)=f(x)^c$. For all $x \in \Omega\setminus\rr$ and all $y\in\s_x$, it holds $f^c(\con(f'_s(x),y^c))=f(y)^c$.
\end{lemma}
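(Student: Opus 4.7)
The plan is to write $f = \mathcal{I}(F)$ with $F = F_1 + \iota F_2$, so that $f^c = \mathcal{I}(F^c)$ with $F^c = F_1^c + \iota F_2^c$, and then to reduce the second equality to the commutation identity packaged in Remark~\ref{rmk:conproprerty2}. The real case is essentially a one-line check: for $x = \alpha \in \Omega \cap \rr$ the stem condition $F_2(\overline{z}) = -F_2(z)$ at $z = \alpha$ forces $F_2(\alpha) = 0$, hence $f(\alpha) = F_1(\alpha)$ and $f^c(\alpha) = F_1^c(\alpha) = F_1(\alpha)^c = f(\alpha)^c$.

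For the non-real case, fix $x = \alpha + \beta J \in \Omega \setminus \rr$ and any $y = \alpha + \beta K \in \s_x$ (so $K \in \s_A$), and set $z = \alpha + \iota \beta$. The representation~\eqref{eqn:repr} gives $f(y) = F_1(z) + K F_2(z)$, hence, using $K^c = -K$ and $(ab)^c = b^c a^c$,
\begin{equation*}
f(y)^c = F_1^c(z) - F_2^c(z)\,K.
\end{equation*}
On the other side, $y^c = \alpha - \beta K \in \s_x$, and Remark~\ref{rmk:conproprerty} yields $w := \con_{f'_s(x)}(y^c) = \alpha - \beta K'$ with $K' := \con_{f'_s(x)}(K) \in \s_A$. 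Since $(f^c)^\circ_s(w) = F_1^c(z)$ and $(f^c)'_s(w) = \beta^{-1} F_2^c(z)$, applying~\eqref{eqn:repr} again gives
\begin{equation*}
f^c(w) = F_1^c(z) + (-\beta K')\cdot \beta^{-1} F_2^c(z) = F_1^c(z) - K'\, F_2^c(z).
\end{equation*}

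Thus the claim reduces to showing $K'\, F_2^c(z) = F_2^c(z)\, K$, and since $F_2(z) = \beta\, f'_s(x)$, this is equivalent to $K'\, f'_s(x)^c = f'_s(x)^c\, K$. This is exactly what Remark~\ref{rmk:conproprerty2} supplies: with $h = f'_s(x)$ and $l = K$ it says $h\, \con_h(K) = K\, h$; conjugating both sides and using $K^c = -K$ together with $(K')^c = -K'$ (valid because $K, K' \in \s_A$ have zero trace) delivers the required identity. The subtlety worth flagging is that $f'_s(x)$ may be zero or a zero divisor, but Remark~\ref{rmk:conproprerty2} and the extended definition of $\con$ were precisely designed to cover those cases uniformly, so no separate case analysis is needed. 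The commutation step is the only nontrivial point; everything else is bookkeeping in the stem-function formalism.
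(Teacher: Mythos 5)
Your proof is correct and follows essentially the same route as the paper: reduce both sides via the representation formula on $\s_x$ and invoke the commutation identity of Remark~\ref{rmk:conproprerty2}. The only (harmless) variation is that you obtain the needed relation $K'f'_s(x)^c=f'_s(x)^cK$ by conjugating $h\,\con_h(K)=Kh$ using $K^c=-K$, $(K')^c=-K'$, whereas the paper derives the general identity $\con(h,l)h^c=h^cl$ by multiplying by $h^c$ and dividing by the norm, with a case split on $h$; your shortcut neatly avoids that case analysis.
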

\begin{proof}
If $x \in \Omega \cap \rr$, then $f^c(x)=(f^c)^\circ_s(x)=f_s^\circ(x)^c=f(x)^c$.

Suppose, instead, $x \in \Omega\setminus\rr$ and $y\in\s_x$. We claim that $\con(h,l)h^c=h^cl$ for all $h\in\D\hh$ and we compute
\begin{align*}
f^c(\con(f'_s(x),y^c))&= f_s^\circ(x)^c+ \im(\con(f'_s(x),y^c))f'_s(x)^c= f_s^\circ(x)^c+\con(f'_s(x),\im(y^c))f'_s(x)^c \\
&=f_s^\circ(x)^c+f'_s(x)^c \im(y^c)=f_s^\circ(x)^c-f'_s(x)^c \im(y)=f(y)^c\,,
\end{align*}
which proves our thesis.

Our claim can be easily derived from the equality $h\con(h,l)=lh$ proven in Remark~\ref{rmk:conproprerty2}. If $h$ is invertible, it suffices to multiply each hand of the equality by $h^c$ both on the left and on the right and to divide it by the dual number $n(h)$. If, instead, $h=\epsilon h_2$, it suffices to multiply each hand of the equality by $h_2^c$ both on the left and on the right and to divide it by the real number $n(h_2)$.
\end{proof}

We are now ready to study the zero set $V(f)$ and its relation to $V(f^c)$. For each $x \in \Omega \setminus \rr$, let us set the notation
\[T_{x_1}\vcentcolon=\{x_1+\epsilon\gamma\,:\,\gamma \in \im(\hh), \ \gamma \perp \im(x_1)\}\]
for the tangent plane to the $2$-sphere $\s_x\cap\hh$ at $x_1$. This is a consistent extension of the notation $T_{J_1}$ we have already set for $J_1 \in \s_{\hh}$.

\begin{theorem}
\label{thm:zero}
Let $f \in \mathcal{S}(\Omega)$. If $x\in\Omega\cap\rr$, then $\s_x=\{x\}$ is included either in both $V(f)$ and in $V(f^c)$ or in none of the two. If instead $x \in \Omega\setminus\rr$, then one of the following properties holds:
\begin{enumerate}
\item $V(f)$ does not intersect $\s_x$;
\item $V(f) \cap \s_x=\{y\}$, $f'_s(x)$ is invertible and $y=\re(x)-f^\circ_s(x)f'_s(x)^{-1}$;
\item $V(f) \cap \s_x=T_{y_1}$ for some $y_1 \in \s_x \cap \hh$ and $f'_s(x), f^\circ_s(x) \in \epsilon \hh^*$;
\item $V(f)$ includes $\s_x$ and $f'_s(x)=f_s^\circ(x)=0$.
\end{enumerate}
In each of the aforementioned cases, respectively:
\begin{enumerate}
\item $V(f^c)$ does not intersect $\s_x$;
\item $V(f^c) \cap \s_x = \{\con(f'_s(x), y^c)\}$;
\item $V(f^c) \cap \s_x=T_{\con(f'_s(x),y_1^c)}$;
\item $V(f^c)$ includes $\s_x$.
\end{enumerate}
\end{theorem}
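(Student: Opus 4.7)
My plan is to reduce everything to the representation formula~\eqref{eqn:repr}: on any $\s_x$ both the spherical value and the spherical derivative are constant, so for $y\in\s_x$ one has $f(y)=f^\circ_s(x)+\im(y)\,f'_s(x)$. The case $x\in\Omega\cap\rr$ is then immediate, because $\s_x=\{x\}$ and Lemma~\ref{lemma:conjugate} gives $f^c(x)=f(x)^c$.

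For $x\in\Omega\setminus\rr$, I write $x=\alpha+\beta J$ with $\beta>0$ and parametrize $y\in\s_x$ as $y=\alpha+\mu+\epsilon\nu$ with $\mu,\nu\in\im(\hh)$, $|\mu|=\beta$, $\mu\perp\nu$. The equation $f(y)=0$ becomes $\im(y)\,f'_s(x)=-f^\circ_s(x)$, and I split into three subcases according to whether $f'_s(x)$ is zero, invertible, or a nonzero zero divisor in $\D\hh$. If $f'_s(x)=0$, then $f$ is constantly $f^\circ_s(x)$ on $\s_x$, yielding case 4 when $f^\circ_s(x)=0$ and case 1 otherwise. If $f'_s(x)\in\D\hh^\times$, the equation is uniquely solvable for $\im(y)=-f^\circ_s(x)f'_s(x)^{-1}$, giving case 2 if the resulting $y$ lies in $\s_x$ and case 1 if not. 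If instead $f'_s(x)=\epsilon a_2$ with $a_2\in\hh^*$, the identity $\epsilon^2=0$ collapses the equation to $\epsilon\mu a_2=-f^\circ_s(x)$: this forces $f^\circ_s(x)\in\epsilon\hh$, say $f^\circ_s(x)=\epsilon b_2$, and zeros exist precisely when $b_2\ne 0$ together with the modulus compatibility $|b_2|=\beta|a_2|$; in that event $\mu=-b_2 a_2^{-1}$ is prescribed while $\nu$ remains free in the orthogonal complement of $\mu$, producing exactly the tangent plane $T_{y_1}$ of case 3 with $y_1=\alpha+\mu$.

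The conclusions about $V(f^c)$ will follow from the identity $f^c(\con(f'_s(x),y^c))=f(y)^c$ of Lemma~\ref{lemma:conjugate}. The map $y\mapsto\con(f'_s(x),y^c)$ is a bijection of $\s_x$ onto itself: the conjugation $l\mapsto l^c$ is a bijection of $\s_x$, and so is $\con_{f'_s(x)}$ for any value of $f'_s(x)\in\D\hh$, thanks to the extended definition introduced in Section~\ref{sec:numbers}. Hence $V(f^c)\cap\s_x$ equals the image of $V(f)\cap\s_x$ under this bijection, which settles cases 1, 2 and 4 immediately. For case 3 I will verify that the image of the tangent plane $T_{y_1}$ is $T_{\con(f'_s(x),y_1^c)}$ by unfolding $\con_{\epsilon a_2}(l)=a_2^{-1}l\,a_2$, which acts componentwise on primal and dual parts, and by noting that $l\mapsto a_2^{-1}l\,a_2$ is a rotation of $\im(\hh)$ and therefore preserves the orthogonality condition defining the tangent plane.

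I expect the main technical hurdle to be the zero-divisor case $f'_s(x)\in\epsilon\hh^*$: the key algebraic phenomenon is that multiplication by a zero divisor absorbs one dimension of the unknown, so that $f^\circ_s(x)$ is also forced to be a zero divisor, a single real compatibility condition dictates whether any zero exists, and the remaining freedom in the dual component of $\im(y)$ assembles into a full tangent plane rather than a single point. The accompanying statement for $V(f^c)$ in this regime is the one place where the extended definition of $\con$ on the non-invertible part of $\D\hh$ is genuinely needed, and keeping the bookkeeping between primal and dual parts correct there is where I will have to be most careful.
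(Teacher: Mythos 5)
Your proposal is correct and follows essentially the same route as the paper's proof: the representation $f(y)=f^\circ_s(x)+\im(y)f'_s(x)$ with a case split on whether $f'_s(x)$ is zero, invertible, or in $\epsilon\hh^*$, together with Lemma~\ref{lemma:conjugate} for the statements about $f^c$ (where the paper disposes of the empty case by a short contradiction via $(f^c)^c=f$, you instead use bijectivity of $y\mapsto\con(f'_s(x),y^c)$ on $\s_x$, which indeed holds because $\con_{\epsilon a_2}=\con_{a_2}$ with $a_2\in\hh^*$). One incidental slip, harmless for the theorem itself: your ``zeros exist precisely when'' criterion in the zero-divisor case also requires $b_2a_2^{-1}\in\im(\hh)$, not merely $|b_2|=\beta|a_2|$, though the implication you actually need (a zero exists $\Rightarrow$ the zero set is the tangent plane $T_{y_1}$ and $f^\circ_s(x)\in\epsilon\hh^*$) is fully established.
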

\begin{proof}
Our statement for $x\in\Omega\cap\rr$ follows from the equality $f^c(x)=f(x)^c$. Now let us suppose $x \in \Omega \setminus \rr$. For all $y \in \s_x$, the following decomposition holds:
\begin{equation}
\label{eq:rappr}
f(y)=f_s^\circ(x)+\im(y)f'_s(x)\,.
\end{equation}
If $V(f) \cap \s_x \neq \emptyset$, then consider $y \in V(f) \cap \s_x$.
\begin{itemize}
\item If $f'_s(x)$ is invertible then, by equality~\eqref{eq:rappr}, we have
\[\im(y)=-f_s^\circ(x)f'_s(x)^{-1}\,,\]
whence $y=\re(x)-f^\circ_s(x)f'_s(x)^{-1}$.
\item If instead $f'_s(x) \in \epsilon \hh^*$ then, starting again from equality~\eqref{eq:rappr}, we have
\[f_s^\circ(x)=-\im(y)f'_s(x)\,.\]
In this case, $f_s^\circ(x)\in\epsilon\hh^*$. Moreover, for every $z \in \s_x$, we decompose $f(z)$ as follows:
\[f(z)=f_s^\circ(x)+\im(z)f'_s(x)=(-\im(y)+\im(z))f'_s(x)=(z-y)f'_s(x)\,.\]
By Remark~\ref{rmk:zerodivisors}, we conclude that $f(z)=0$ if and only if, $z-y \in \epsilon \hh$. This is a same as $z_1=y_1$, i.e., $z \in T_{y_1}$.
\item Finally, if $f'_s(x)=0$, then $f \equiv f^\circ_s(x)$ in $\s_x$. Since $f(y)=0$, we immediately conclude that $f_s^\circ(x)=0$ and $V(f)\supseteq\s_x$.
\end{itemize}
If $x \in \Omega \cap \rr$ then $\s_x=\{x\}$. It is straightforward that either $V(f)\supseteq\s_x$ or $V(f) \cap \s_x=\emptyset$, depending on whether $f(x)$ vanishes or not.

We now prove the statement concerning $f^c$. If $x \in \Omega \cap \rr$, whence $\s_x=\{x\}$, then either $V(f^c)\supseteq\s_x$ or $V(f^c) \cap \s_x=\emptyset$, depending on whether $f^c(x)=f(x)^c$ vanishes or not. Now let $x \in \Omega \setminus \rr$. First suppose $V(f) \cap \s_x$ includes a point $y$. Then, by Lemma~\ref{lemma:conjugate}, $V(f^c) \cap \s_x$ includes the point $\con(f'_s(x),y^c)$. According to what we have proven so far, there are three possibilities.
\begin{itemize}
\item If $V(f) \cap \s_x=\{y\}$, then $(f^c)'_s(x)=f'_s(x)^c$ is invertible and $V(f^c) \cap \s_x = \{\con(f'_s(x), y^c)\}$. 
\item If $V(f) \cap \s_x=T_{y_1}$, then $(f^c)'_s(x)=f'_s(x)^c \in \epsilon \hh^*$ and $V(f^c) \cap \s_x=T_{\con(f'_s(x),y_1^c)}$.
\item If $V(f)\supseteq\s_x$ then $(f^c)'_s(x)=f'_s(x)^c=0$ and $V(f^c)\supseteq\s_x$.
\end{itemize}
The only remaining case is $V(f) \cap \s_x=\emptyset$. In this case, $f^c$ cannot have any zero $z\in\s_x$: otherwise, by Lemma~\ref{lemma:conjugate}, $f=(f^c)^c$ would vanish at $\con(f'_s(x)^c,z^c)\in\s_x$ and we would obtain a contradiction.
\end{proof}

The next result connects the zero set of a function $f \in \mathcal{S}(\Omega)$ with the zero set of its primal part.

\begin{proposition}\label{prop:zeroprimal}
Let $f \in \mathcal{S}(\Omega)$. If $x \in \Omega \cap \rr$, then $f(x) \in \epsilon \hh$, if and only if, ${}^\pi \! f(x)=0$. If $x \in \Omega \setminus \rr$, then there are three possibilities:
\begin{enumerate}
\item all values of $f$ in $\s_x$ are invertible and $V({}^\pi \! f)$ does not intersect $\s_x \cap \hh$;
\item $f$ maps exactly one tangent plane $T_{y_1}$ into $\epsilon \hh$ and $V({}^\pi \! f) \cap \s_x= \{y_1\}$;
\item $f$ maps $\s_x$ into $\epsilon \hh$ and $V({}^\pi \! f)$ includes $\s_x \cap \hh$.
\end{enumerate}
Moreover:
\begin{enumerate}
\item if $V(f) \cap \s_x=\{y\}$, then $V({}^\pi \! f) \cap \s_x= \{y_1\}$;
\item if $V(f) \cap \s_x=T_{y_1}$ or $V(f)\supseteq\s_x$, then $V({}^\pi \! f)$ includes $\s_x \cap \hh$.
\end{enumerate}
\end{proposition}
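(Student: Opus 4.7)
The plan is to reduce everything to the identity $\pi(f(y))={}^\pi\!f(\pi(y))$ for every $y\in\Omega$ from Remark~\ref{rmk:primal}, combined with the analysis of zero sets provided by Theorem~\ref{thm:zero}. A basic point to keep in mind is that for the $\hh$-valued slice function ${}^\pi\!f$, the cases of Theorem~\ref{thm:zero} involving $\epsilon\hh^*$ cannot occur, since $\hh$ has no nonzero zero divisors; only the possibilities ``empty intersection with $\s_{x_1}$'', ``single point'', or ``whole sphere'' are available.

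The case $x\in\Omega\cap\rr$ is immediate: here $\pi(x)=x$, so $f(x)\in\epsilon\hh\Leftrightarrow\pi(f(x))=0\Leftrightarrow{}^\pi\!f(x)=0$.

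For $x=\alpha+\beta J\in\Omega\setminus\rr$ (with $J\in\s_{\D\hh}$ and $\beta>0$), set $x_1=\alpha+\beta J_1$ where $J_1=\pi(J)\in\s_\hh$, so that $\s_x\cap\hh=\s_{x_1}$. The description of $\s_{\D\hh}$ recalled in Section~\ref{sec:imunit} shows that, for each $I_1\in\s_\hh$, the elements $I\in\s_{\D\hh}$ with $\pi(I)=I_1$ are precisely those of the form $I_1+\epsilon m K$ with $m\in\rr$, $K\in\s_\hh$, $K\perp I_1$; after rescaling by $\beta$ and translating by $\alpha$, this shows that $\pi:\s_x\to\s_{x_1}$ is surjective and the fiber over each $y_1\in\s_{x_1}$ is exactly the tangent plane $T_{y_1}$. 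Combining this with Remark~\ref{rmk:primal} yields
\[\{y\in\s_x : f(y)\in\epsilon\hh\}=\pi^{-1}\bigl(V({}^\pi\!f)\cap\s_{x_1}\bigr).\]
Applying the $\hh$-valued case of Theorem~\ref{thm:zero} to ${}^\pi\!f$ at $x_1$, the right-hand side above is either $\emptyset$, a single tangent plane $T_{y_1}$, or all of $\s_x$, producing the three mutually exclusive options (1)--(3) of the proposition.

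The ``Moreover'' statements follow by invoking once again Remark~\ref{rmk:primal}, which gives $\pi(f^\circ_s(x))=({}^\pi\!f)^\circ_s(x_1)$ and $\pi(f'_s(x))=({}^\pi\!f)'_s(x_1)$. If $V(f)\cap\s_x=\{y\}$, then Theorem~\ref{thm:zero} forces $f'_s(x)$ to be invertible in $\D\hh$, so $({}^\pi\!f)'_s(x_1)=\pi(f'_s(x))$ is nonzero in $\hh$; since $y_1\in V({}^\pi\!f)\cap\s_{x_1}$, the trichotomy for ${}^\pi\!f$ reduces to $V({}^\pi\!f)\cap\s_{x_1}=\{y_1\}$. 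If instead $V(f)\cap\s_x=T_{y_1}$ or $V(f)\supseteq\s_x$, then both $f^\circ_s(x)$ and $f'_s(x)$ lie in $\epsilon\hh^*\cup\{0\}$, so their images under $\pi$ vanish, forcing ${}^\pi\!f\equiv0$ on $\s_{x_1}$. The main obstacle I expect to handle carefully is identifying the fiber of $\pi:\s_x\to\s_{x_1}$ over each point $y_1$ with the tangent plane $T_{y_1}$; once this geometric bookkeeping is in place, the rest of the argument is a systematic translation between the two zero-set trichotomies via Remark~\ref{rmk:primal}.
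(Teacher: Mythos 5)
Your proposal is correct and follows essentially the same route as the paper: both rest on the identity $\pi(f(y))={}^\pi\!f(y_1)$ from Remark~\ref{rmk:primal}, the standard quaternionic trichotomy for the zeros of ${}^\pi\!f$ on $\s_x\cap\hh$, and Theorem~\ref{thm:zero} (invertibility of $f'_s(x)$ in the one-point case, $f'_s(x),f^\circ_s(x)\in\epsilon\hh$ in the other cases) for the ``Moreover'' statements. Your explicit identification of the fibers of $\pi|_{\s_x}$ with the tangent planes $T_{y_1}$ is a harmless elaboration of what the paper treats implicitly.
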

\begin{proof}
By Remark~\ref{rmk:primal}, for all $w\in\Omega$ it holds $f(w)\in\epsilon\hh$ if, and only if, ${}^\pi \! f(w_1)=0$.

If $w=x\in\Omega\cap\rr$, it holds $w_1=x$ and the first statement follows.
 
We can prove the second statement as follows. If $x\in\Omega\setminus\rr$, then ${}^\pi \! f$ may vanish:
\begin{enumerate}
\item at no point of $\s_x \cap \hh$;
\item at exactly one point of $\s_x \cap \hh$, say $y_1$;
\item at all points of $\s_x \cap \hh$.
\end{enumerate}
In case {1.} it holds $f(w)\not\in\epsilon\hh$ for all $w\in\s_x$. In case {2.} it holds $f(y_1+\epsilon\gamma)\in\epsilon\hh$ for all $\gamma \in \im(\hh), \gamma \perp \im(y_1)$, while $f(w)\not\in\epsilon\hh$ for all $w\in\s_x$ with $w_1\neq y_1$. In case {3.} it holds $f(w)\in\epsilon\hh$ for all $w\in\s_x$.

Let us now prove the third statement. If $V(f) \cap \s_x=\{y\}$ with $y \in \s_x$ then ${}^\pi \! f(y_1)=0$. Moreover, if ${}^\pi \! f$ had another zero $z_1 \in \s_x \cap \hh$  then $({}^{\pi}\!f)_s'(x_1)=\pi(f'_s(x))$ would vanish and $f_s'(x)$ would be a zero divisor, contradicting Theorem~\ref{thm:zero}. If, instead, $V(f) \cap \s_x=T_{y_1}$ or $V(f)\supseteq\s_x$, then by the same theorem, $f_s'(x)$ and $f^\circ_s(x)$ belong to $\epsilon\hh$. As a consequence, $f$ maps $\s_x$ into $\epsilon \hh$ and $V({}^\pi \! f) \supseteq \s_x \cap \hh$.
\end{proof}

We now study the zero set of the normal function $N(f)$, taking full advantage of the properties of $\D\hh$. We begin by establishing that it is circular.

\begin{proposition}\label{prop:zero-circular-nf}
Let $f \in \mathcal{S}(\Omega)$. Then $V(N(f))$ is circular, i.e., $V(N(f))\cap \s_x \neq \emptyset$ implies $V(N(f))\supseteq\s_x$.
\end{proposition}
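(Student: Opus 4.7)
The plan is to reduce the claim to an algebraic question about the structure of $\D\hh$, exploiting the fact that the remark following formula~\eqref{eqn:normal} places $N(f)$ in $\mathcal{Z}(\mathcal{S}(\Omega))=S_{\D\rr}(\Omega)$; hence $N(f)=\mathcal{I}(F_1+\iota F_2)$ for a $\D\rr_\cc$-valued stem function, with $F_1,F_2\colon D\to\D\rr$.

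The case $x\in\Omega\cap\rr$ is trivial since $\s_x=\{x\}$. Otherwise write $x=\alpha+\beta J_0$ with $\beta>0$ and $J_0\in\s_{\D\hh}$, so that every $y\in\s_x$ is uniquely of the form $\alpha+\beta J$ with $J\in\s_{\D\hh}$, and the commutative diagram defining slice functions gives $N(f)(y)=F_1(z)+JF_2(z)$ for $z=\alpha+\iota\beta$. The hypothesis $V(N(f))\cap\s_x\neq\emptyset$ then becomes the single equation $F_1(z)+JF_2(z)=0$, with $F_1(z),F_2(z)\in\D\rr$ and $J\in\s_{\D\hh}$; once I show $F_1(z)=F_2(z)=0$, the same expression $F_1(z)+J'F_2(z)$ vanishes for every $J'\in\s_{\D\hh}$, yielding $\s_x\subseteq V(N(f))$.

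The heart of the argument is this algebraic step, and it is the only point I expect to be nontrivial. Decompose $J=J_1+\epsilon J_2$ with $J_1\in\s_\hh$ and $J_2\in\im(\hh)$, and write $F_2(z)=a+\epsilon b$ with $a,b\in\rr$. A direct multiplication gives $JF_2(z)=aJ_1+\epsilon(bJ_1+aJ_2)$, so the primal part of $F_1(z)+JF_2(z)$ equals the primal part of $F_1(z)$ plus $aJ_1$, and must vanish; since $F_1(z)\in\D\rr$ while $J_1\in\s_\hh\setminus\rr$, this forces $a=0$. An analogous inspection of the dual part then yields $bJ_1\in\rr$, hence $b=0$, and finally $F_1(z)=0$. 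The only subtle point is that one cannot simply ``invert $F_2(z)$'', because the dual number algebra $\D\rr$ has zero divisors $\epsilon\rr^*$; the argument instead proceeds componentwise and uses crucially that no element of $\s_{\D\hh}$ lies in $\D\rr$ and that the primal part of every $J\in\s_{\D\hh}$ is a nonreal quaternion in $\s_\hh$.
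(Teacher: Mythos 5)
Your proof is correct. It differs from the paper's only in how it reaches the key decomposition: the paper applies formula~\eqref{eqn:normal} directly, writing $N(f)(\alpha+\beta J)=n(a)-n(b)+J\,t(ab^c)$ with $a=f^\circ_s(x)$, $b=\beta f'_s(x)$, and then splits this into its components along $1,\epsilon$ (in $\D\rr$), $J_1$ (in $\im(\hh)$) and $\epsilon J_1,\epsilon J_2$ (in $\epsilon\im(\hh)$), concluding that vanishing at one point of $\s_x$ forces the $J$-independent conditions $n(a)-n(b)=t(a_1b_1^c)=t(a_1b_2^c+a_2b_1^c)=0$; you instead invoke that $N(f)$ lies in $\mathcal{Z}(\mathcal{S}(\Omega))=S_{\D\rr}(\Omega)$, so its stem function is $\D\rr_\cc$-valued, and prove the algebraic fact that $p+Jq=0$ with $p,q\in\D\rr$ and $J\in\s_{\D\hh}$ forces $p=q=0$. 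The component-wise argument (using that $J_1\in\s_\hh$ is a nonreal imaginary unit, in both the primal and the dual part) is the same in substance, and your handling of the zero divisors of $\D\rr$ is sound. What your packaging buys is a slightly more general statement: every slice function induced by a $\D\rr_\cc$-valued stem function --- in particular every element of the center, and every slice preserving function --- has circular zero set. What the paper's concrete form buys is the explicit characterization $n(a)-n(b)=0=t(ab^c)$ of spheres in $V(N(f))$, which is reused later in the proof of Theorem~\ref{thm:zeronorm}; your version would need that computation to be redone there.
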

\begin{proof}
Suppose $\s_x=\alpha+\beta \s_{\D\hh}$. For the sake of simplicity, we denote $f^\circ_s(x)$ and $\beta f'_s(x)$ by $a$ and $b$, respectively. For all $J\in\s_{\D\hh}$, by formula \eqref{eqn:normal},
\begin{align*}
N(f)(\alpha+\beta J)&=n(a)-n(b)+J t(ab^c)=n(a)-n(b)+J_1t(ab^c)+\epsilon J_2t(ab^c) \\
&=\underbrace{n(a)-n(b)}_{\in \rr+ \epsilon \rr}+\underbrace{J_1t(a_1b_1^c)}_{\in \im(\hh)}+\underbrace{\epsilon J_1 t(a_1b_2^c+a_2b_1^c)+ \epsilon J_2 t(a_1b_1^c)}_{\in \epsilon \im(\hh)}\,.
\end{align*}
If $N(f)(\alpha+\beta I)=0$ then $n(a)-n(b), I_1t(a_1b_1^c)$ and $\epsilon I_1 t(a_1b_2^c+a_2b_1^c)+ \epsilon I_2 t(a_1b_1^c)$ vanish, separately. This is, in turn, equivalent to $n(a)-n(b)=t(a_1b_1^c)=t(a_1b_2^c+a_2b_1^c)=0$. If this is the case, then $N(f)(\alpha+\beta J)=0$ independently of $J$ and $V(N(f))\supseteq\s_x$.
\end{proof}

Our next aim is studying the relation between $V(N(f))$ and $V(f)$. The next lemma will be useful to this end, because it connects the spherical derivative and the spherical value of $N(f)$ to the values of $f$.

\begin{lemma}\label{lemma:dernorm}
Let $f \in \mathcal{S}(\Omega)$ and let $x \in \Omega \setminus \rr$. For all $y \in \s_x$, the following equalities hold.
\begin{align*}
N(f)'_s(x)&=t(f(y)f'_s(x)^c)\,, \\
N(f)^\circ_s(x)&=f(y)f^\circ_s(x)^c-\im(y)f_s'(x)f(y)^c \,.
\end{align*}
\end{lemma}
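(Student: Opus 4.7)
The plan is to compute both right-hand sides directly, using the representation formula \eqref{eqn:repr}. Because $f^\circ_s$ and $f'_s$ are constant on $\s_x$, for every $y\in\s_x$ we have
\[f(y)=f^\circ_s(x)+\im(y)f'_s(x),\qquad f(y)^c=f^\circ_s(x)^c-f'_s(x)^c\im(y).\]
This is the only non-trivial input; once we have it, the formulas for $N(f)^\circ_s$ and $N(f)'_s$ given in \eqref{eqn:normal} are just tautologies about $f^\circ_s(x), f'_s(x)$, so the entire task reduces to algebraic manipulation against the centrality/reality of the norm.

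For the first equality, I expand $t(f(y)f'_s(x)^c)=t(f^\circ_s(x)f'_s(x)^c)+t(\im(y)\,n(f'_s(x)))$. Since $n(f'_s(x))\in\D\rr$, which lies in the center and is fixed by conjugation, the second summand equals $\im(y)\,n(f'_s(x))+n(f'_s(x))\,\im(y)^c=\im(y)\,n(f'_s(x))-n(f'_s(x))\,\im(y)=0$. This leaves $t(f^\circ_s(x)f'_s(x)^c)=N(f)'_s(x)$, as in \eqref{eqn:normal}.

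For the second equality, I expand
\[f(y)f^\circ_s(x)^c=n(f^\circ_s(x))+\im(y)f'_s(x)f^\circ_s(x)^c,\]
\[\im(y)f'_s(x)f(y)^c=\im(y)f'_s(x)f^\circ_s(x)^c-\im(y)\,n(f'_s(x))\,\im(y).\]
Subtracting cancels the mixed term $\im(y)f'_s(x)f^\circ_s(x)^c$ and yields $n(f^\circ_s(x))+\im(y)\,n(f'_s(x))\,\im(y)$. Using centrality of $n(f'_s(x))$ this becomes $n(f^\circ_s(x))+\im(y)^2 n(f'_s(x))$, and since $\im(y)^2=\im(x)^2$ for every $y\in\s_x=\alpha+\beta\s_A$ (both equal $-\beta^2$), this is exactly $N(f)^\circ_s(x)$ from \eqref{eqn:normal}.

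The only step that requires care is the vanishing $t(\im(y)\,c)=0$ for $c$ central with $c^c=c$, and the symmetric cancellation in the second computation: both rely crucially on the fact that $n(f'_s(x))\in\D\rr$ commutes with $\im(y)$ and equals its own conjugate. This is really the algebraic heart of the lemma — the observation that norms take values in $\D\rr$, so the non-central parts of $f(y)$ conspire to produce an expression depending only on the sphere $\s_x$, not on the chosen $y$.
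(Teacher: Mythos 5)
Your proof is correct and follows essentially the same route as the paper: expand $f(y)=f^\circ_s(x)+\im(y)f'_s(x)$ and $f(y)^c=f^\circ_s(x)^c-f'_s(x)^c\im(y)$, then reduce to the expressions in \eqref{eqn:normal} using that $n(f'_s(x))\in\D\rr$ is central and fixed by conjugation (the paper phrases the same computation with the abbreviations $a=f^\circ_s(x)$, $b=\beta f'_s(x)$, $J=\im(y)/\beta$). No gaps.
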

\begin{proof}
For all $y= \alpha+\beta J \in \s_x$, we have 
\[f(y)=f_s^\circ(x)+\im(y)f'_s(x)=a+Jb\,,\]
where as usual we denote $f^\circ_s(x)$ and $\beta f'_s(x)$ by $a$ and $b$, respectively. By Equation \eqref{eqn:normal}, $N(f)'_s(x)=t(ab^c)$ and $N(f)^\circ_s(x)=n(a)-n(b)$. Recalling that $\D\rr$ is the center of $\D\hh$ and noticing that it is invariant under *-involution, the following equalities hold:
\begin{align*}
t(f(y)f'_s(x)^c)&=t((a+Jb)b^c)=t(ab^c+Jn(b)) \\
&=t(ab^c)+t(Jn(b))=t(ab^c)+Jn(b)-n(b)^cJ=t(ab^c)\,.
\end{align*}
Moreover, 
\begin{align*}
f(y)f^\circ_s(x)^c-\im(y)f_s'(x)f(y)^c&=(a+Jb)a^c-Jb(a+Jb)^c\\
&= n(a)+Jba^c-Jba^c+Jn(b)J=n(a)-n(b)\,.\qedhere
\end{align*}
\end{proof}

We are now able to state and prove the following theorem.

\begin{theorem}
\label{thm:zeronorm}
Let $f \in \mathcal{S}(\Omega)$ with $\Omega=\Omega_D$. Then
\begin{equation}\label{eq:zeronorm}
V(N(f))=\bigcup_{V(f) \cap \s_x\neq \emptyset} \s_x \ \ \cup  \bigcup_{\s_x \cap \hh \subseteq V({}^\pi \! f)} \s_x\,.
\end{equation} 
Moreover, for each $x\in\Omega\setminus\rr$ the normal function $N(f)$ vanishes (identically) in $\s_x$, if and only if, either $f$ has a unique zero in $\s_x$ or ${}^\pi \! f$ vanishes identically in $\s_x \cap \hh$.
\end{theorem}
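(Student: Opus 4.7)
The plan is to exploit the circularity of $V(N(f))$ (Proposition~\ref{prop:zero-circular-nf}) together with the explicit formulas of Lemma~\ref{lemma:dernorm}, which express $N(f)^\circ_s(x)$ and $N(f)'_s(x)$ via any value $f(y)$ with $y\in\s_x$. Since $V(N(f))$ is circular, proving~\eqref{eq:zeronorm} reduces to showing, for each $x\in\Omega\setminus\rr$, that $\s_x\subseteq V(N(f))$ is equivalent to the disjunction $V(f)\cap\s_x\neq\emptyset$ or $\s_x\cap\hh\subseteq V({}^\pi \! f)$. Real points $x\in\Omega\cap\rr$ are handled separately: at such $x$ the slice product degenerates to the pointwise product and Lemma~\ref{lemma:conjugate} gives $N(f)(x)=n(f(x))$, which vanishes iff $f(x)\in\epsilon\hh$, iff $x\in V({}^\pi \! f)$; this matches the right-hand side of~\eqref{eq:zeronorm} at the point $x$ since $\s_x=\{x\}$.

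For the inclusion $\supseteq$ at a given $\s_x$ with $x\in\Omega\setminus\rr$, both cases are short. If $y\in V(f)\cap\s_x$, then substituting this $y$ into both formulas of Lemma~\ref{lemma:dernorm} immediately gives $N(f)^\circ_s(x)=0=N(f)'_s(x)$, so $N(f)\equiv 0$ on $\s_x$. If instead $\s_x\cap\hh\subseteq V({}^\pi \! f)$, then the quaternionic slice function ${}^\pi \! f$ vanishes identically on the $2$-sphere $\s_x\cap\hh$, forcing $({}^\pi \! f)^\circ_s(x_1)=0=({}^\pi \! f)'_s(x_1)$. By Remark~\ref{rmk:primal} this translates into $\pi(f^\circ_s(x))=0=\pi(f'_s(x))$, so both $a\vcentcolon= f^\circ_s(x)$ and $b\vcentcolon= f'_s(x)$ lie in $\epsilon\hh$; Remark~\ref{rmk:zerodivisors} then makes every product appearing in formula~\eqref{eqn:normal} vanish and yields once more $N(f)^\circ_s(x)=0=N(f)'_s(x)$.

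The core step is the reverse inclusion. Assume $\s_x\subseteq V(N(f))$ but $V(f)\cap\s_x=\emptyset$, set $a=f^\circ_s(x)$, $b=f'_s(x)$, and let $\beta>0$ be the radius of $\s_x$, so that $\im(y)^2=-\beta^2$ for every $y\in\s_x$. Formula~\eqref{eqn:normal} rewrites the hypothesis as the two equalities $n(a)=\beta^2 n(b)$ and $t(ab^c)=0$ in $\D\rr$. If $b$ were invertible, I would set $I\vcentcolon=-ab^c/(\beta\,n(b))$ and observe that $t(ab^c)=0$ gives $I^c=-I$, while $n(a)=\beta^2 n(b)$ gives $n(I)=1$; hence $I\in\s_{\D\hh}$ and the point $y=\re(x)+\beta I\in\s_x$ would satisfy $a+\beta Ib=a-ab^cb/n(b)=0$, contradicting $V(f)\cap\s_x=\emptyset$. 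Therefore $b$ is not invertible, i.e., $\pi(b)=0$; the equality $n(a)=\beta^2 n(b)=0$ then forces $\pi(a)=0$ as well, and by Remark~\ref{rmk:primal} we conclude that ${}^\pi \! f$ vanishes identically on $\s_x\cap\hh$.

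The ``moreover'' statement follows by combining the characterization just proved with Theorem~\ref{thm:zero} and Proposition~\ref{prop:zeroprimal}: in cases~3 and~4 of Theorem~\ref{thm:zero}, where $V(f)\cap\s_x$ is either a tangent plane or the whole sphere, Proposition~\ref{prop:zeroprimal} already guarantees $\s_x\cap\hh\subseteq V({}^\pi \! f)$. Therefore the disjunction ``$V(f)\cap\s_x\neq\emptyset$ or $\s_x\cap\hh\subseteq V({}^\pi \! f)$'' collapses to ``$f$ has a unique zero in $\s_x$ or ${}^\pi \! f$ vanishes identically on $\s_x\cap\hh$''. I expect the main technical obstacle to be the explicit construction of $I$ in the converse step: one must simultaneously verify $I^c=-I$ and $n(I)=1$, both of which rely on moving the dual-real factor $n(b)$ freely across quaternionic products via the centrality of $\D\rr$ in $\D\hh$ (Remark~\ref{rmk:centerdh}).
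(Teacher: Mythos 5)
Your proof is correct, but the decisive step is handled differently from the paper. For the hard inclusion the paper descends to the quaternions: it uses $N({}^\pi \! f)={}^\pi \! N(f)$, invokes the quaternionic result that a sphere of zeros of the normal function contains a zero of ${}^\pi \! f$, and then lifts that quaternionic zero $y_1$ to a zero of $f$ in the tangent plane $T_{y_1}$ by solving an explicit linear system for $\gamma\in\im(\hh)$, whose compatibility conditions turn out to be exactly the dual parts of $t(ab^c)$ and $n(a)-n(b)$. You instead stay entirely in $\D\hh$: splitting on the invertibility of $b=f'_s(x)$, you either produce the zero directly via $I=-ab^c/(\beta\,n(b))$ (the classical quaternionic construction, which survives because $n(b)$ is an invertible element of the center $\D\rr$, so $t(ab^c)=0$ and $n(a)=\beta^2 n(b)$ give $t(I)=0$, $n(I)=1$), or, when $b\in\epsilon\hh$, you deduce $n(b)=0$, hence $n(a)=0$, hence $a_1=b_1=0$ and ${}^\pi \! f\equiv0$ on $\s_x\cap\hh$ via Remark~\ref{rmk:primal}. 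This is self-contained (no appeal to the external quaternionic proposition) and gives the set equality cleanly; the price is that existence \emph{and uniqueness} of the zero, which the paper obtains in one stroke together with its precise location in $T_{y_1}$, you must recover afterwards from Theorem~\ref{thm:zero} and Proposition~\ref{prop:zeroprimal} — which you do correctly, since in cases 3 and 4 of Theorem~\ref{thm:zero} the primal part vanishes on the whole $2$-sphere, so the disjunction collapses as you claim. Your treatment of the real points and of the two easy inclusions coincides in substance with the paper's first two steps, and your reliance on Proposition~\ref{prop:zero-circular-nf} to reduce to a sphere-by-sphere equivalence is legitimate.
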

\begin{proof}
As a first step, let us prove that $V(N(f))$ includes $\bigcup_{V(f) \cap \s_x\neq \emptyset} \s_x$. If $y \in V(f) \cap \s_x$ then, by Lemma~\ref{lemma:dernorm}, $N(f)'_s(x)=N(f)^\circ_s(x)=0$, whence $V(N(f))\supseteq\s_x$.

As a second step, we prove that $V(N(f))$ includes $\bigcup_{\s_x \cap \hh \subseteq V({}^\pi \! f)} \s_x$. If ${}^\pi \! f\equiv 0$ in $\s_x \cap \hh$ then $a=f_s^\circ(x)$ and $b=\beta f'_s(x)$ have $a_1=0=b_1$. In other words, $a$ and $b$ belong to $\epsilon \hh$, whence $n(a)=n(b)=ab^c=0$. As a consequence, for all $y=\alpha+\beta J \in \s_x$, the expression
\[N(f)(y)=n(a)-n(b)+Jt(ab^c)\]
vanishes.

As a third step, let us take any $\s_x$ contained in  
\[V(N(f)) \ \ \setminus \bigcup_{\s_x \cap \hh \subseteq V({}^\pi \! f)} \s_x \]
and prove that $\s_x$ includes exactly one zero of $f$. We observe that $N({}^\pi \! f)={}^\pi \! N(f)$ by Corollary~\ref{cor:piomo}, whence $V(N({}^\pi \! f))\supseteq\s_x\cap\hh$. By~\cite[Proposition 3.9]{librospringer}, the function ${}^\pi \! f$ has a zero $y_1=\alpha+\beta J_1$ in $\s_x \cap \hh$, a zero which is unique because we have assumed ${}^\pi \! f$ not to vanish identically in $\s_x\cap\hh$. We complete our proof by finding a unique zero of $f$ in $T_{y_1}$. If $a=f_s^\circ(x)$ and $b=\beta f'_s(x)$ (whence $\beta({}^\pi \! f)'_s(x)=b_1$ and $({}^\pi \! f)_s^\circ(x)=a_1=-J_1b_1$), we have to prove that there exists a unique $\gamma\in\im(\hh)$ with $\gamma\perp J_1$ such that
\begin{align*}
0=f(y_1+\epsilon\gamma)&=a+(J_1+\epsilon\gamma)b=a_1+J_1b_1+\epsilon(a_2+J_1b_2+\gamma b_1)=\epsilon(a_2+J_1b_2+\gamma b_1)\,.
\end{align*}
This happens if, and only if, $a_2b_1^c+J_1b_2b_1^c$ is an element of $\im(\hh)$, orthogonal to $J_1$, i.e.,
\begin{equation}\label{eq:normsystem}
\begin{cases}
t(a_2b_1^c+J_1b_2b_1^c)=0\\
t(J_1a_2b_1^c-b_2b_1^c)=0
\end{cases}
\end{equation}
After recalling that $t(pq)=t(qp)$ and $t(pq^c)=t(qp^c)$ for all $p,q\in\hh$, we can observe that
\[t(a_2b_1^c+J_1b_2b_1^c)=t(a_2b_1^c)+t(b_1^cJ_1b_2)=t(a_2b_1^c)+t(a_1^cb_2)=t(a_1b_2^c+a_2b_1^c)\]
is the dual part of $t(ab^c)$ and that
\[t(J_1a_2b_1^c-b_2b_1^c)=t(b_1^cJ_1a_2)-t(b_1^cb_2)=t(a_1^ca_2)-t(b_1^cb_2)\]
is the dual part of $n(a)-n(b)$. Since $N(f)$ vanishes identically in $\s_x$, we know that $n(a)-n(b)=0=t(ba^c)$. Thus, system~\eqref{eq:normsystem} is fulfilled and our proof is complete.
\end{proof}

We conclude the section with some examples that illustrate the previous results.

\begin{example}
For all $x\in\Q_{\D\hh}$, let
\[f(x)=x^2+1\,.\]
Then $f^c=f$ and $N(f)(x)=(x^2+1)^2$. Moreover, ${}^\pi \! f(x_1)=x_1^2+1$. Thus,
\[V(f)=V(f^c)=V(N(f))=\s_{\D\hh},\quad V({}^\pi \! f)=\s_\hh\,.\]
\end{example}

\begin{example}
For all $x\in\Q_{\D\hh}$, let
\[f(x)=x^2- x \epsilon +\epsilon i +1=(x-i)\cdot(x+i-\epsilon)\,.\]
It holds ${}^\pi \! f(x_1)=x_1^2+1$ and $N(f)=(x^2+1)(x^2+1-2x\epsilon)$, whence
\[V({}^\pi \! f)=\s_\hh,\quad V(N(f))=\s_{\D\hh}\,.\]
It is easy to observe that $i$ is a zero of $f$. Moreover, since $f(x)$ coincides with $- x \epsilon +\epsilon i$ for all $x\in\s_{\D\hh}$, we conclude that $f_s'(i)=-\epsilon$. Thus,
\[V(f)=T_{i},\quad V(f^c)=T_{-i}\,,\]
where we took into account the fact that $\con(-\epsilon , i^c)=-i$.
\end{example}

\begin{example}
For all $x\in\Q_{\D\hh}$, let
\[f(x)=x-1-i-\epsilon j\,.\]
By direct computation, $f^c(x)=x-1+i+\epsilon j$, $N(f)(x)=x^2-2x+2$ and ${}^\pi \! f(x_1)=x_1-1-i$. As a consequence,
\[V(f)=\{1+i+\epsilon j\},\quad V(f^c)=\{1-i-\epsilon j\},\quad V(N(f))=1+\s_{\D\hh},\quad V({}^\pi \! f)=\{1+i\}\,.\]
\end{example}

\begin{example}
On $\Q_{\D\hh}$, let
\[f \equiv \epsilon i\,.\]
By direct computation, $f^c\equiv-\epsilon i$, $N(f)\equiv 0$ and ${}^\pi \! f\equiv 0$. Thus,
\[V(f)=V(f^c)=\emptyset,\quad V(N(f))=\Q_{\D\hh},\quad V({}^\pi \! f)=\hh\,.\]
\end{example}

\section{Zeros of slice products}\label{sec:products}

This section describes in great detail the zero set of the slice product of two slice functions over $\D\hh$. We begin with a result that expresses the values of a slice product as products of values of its two factors.

\begin{theorem}
\label{thm:pointwiseprod}
Let $f,g \in \mathcal{S}(\Omega)$ and fix $x\in\Omega$. If $x\in \Omega \cap \rr$, then $(f\cdot g)(x)=f(x)g(x)$.

Suppose instead $x\in \Omega \setminus \rr$. If $y,z\in\s_x$ fulfill one of the following (mutually equivalent) conditions:
\begin{enumerate}
\item $yf(y)-f(y)z=0$;
\item $zf^c(z)-f^c(z)y=0$;
\end{enumerate}
then
\[(f\cdot g)(y)=f(y)g(z)\,.\]
Condition {\it 1.} is equivalent to $z=\con(f(y),y)$, when $f(y)$ is invertible; it is equivalent to $z\in T_{\con(f(y),y_1)}$, when $f(y)$ is a zero divisor; it is automatically fulfilled when $f(y)=0$. Similarly, condition {\it 2.} is equivalent to $y=\con(f^c(z),z)$ when $f^c(z)$ is invertible;  it is equivalent to $y\in T_{\con(f^c(z),z_1)}$ when $f^c(z)$ is a zero divisor;  it is automatically fulfilled when $f^c(z)=0$.
\end{theorem}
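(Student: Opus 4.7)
The plan is to reduce both the product formula and the equivalence of conditions~1 and~2 to direct calculations from the representation \eqref{eqn:repr}, and then to analyze the two conditions case-by-case on the value of $f(y)$ (respectively $f^c(z)$).

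The case $x \in \Omega \cap \rr$ is immediate: then $y = z = x$, $\im(x) = 0$, and \eqref{eqn:repr} together with \eqref{eqn:prod} give $(f \cdot g)(x) = f_s^\circ(x) g_s^\circ(x) = f(x) g(x)$. For $x \in \Omega \setminus \rr$ and arbitrary $y, z \in \s_x$, I would expand $(f \cdot g)(y)$ via \eqref{eqn:prod} and $f(y)g(z)$ via \eqref{eqn:repr}, writing $a := f_s^\circ(x)$ and $b := f_s'(x)$, and verify by direct manipulation the factorization
\[(f \cdot g)(y) - f(y) g(z) \;=\; \bigl(\im(y) f(y) - f(y) \im(z)\bigr)\, g_s'(x)\,.\]
Since $\re(y) = \re(z) = \re(x)$ is central in $\D\hh$, the parenthesis coincides with $y f(y) - f(y) z$, so condition~1 immediately produces the product formula.

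Next, I would show that conditions~1 and~2 define the same set of pairs $(y, z)$. Applying the $*$-involution to the equation $\im(y) f(y) - f(y) \im(z) = 0$ and using $\im(y)^c = -\im(y)$, $\im(z)^c = -\im(z)$ rewrites it as $\im(z) f(y)^c - f(y)^c \im(y) = 0$. From \eqref{eqn:repr} and \eqref{eqn:con} a short computation gives $f(y)^c - f^c(z) = -f_s'(x)^c \im(y) - \im(z) f_s'(x)^c$; substituting this difference into the comparison with condition~2 telescopes to the single term $(\im(y)^2 - \im(z)^2)f_s'(x)^c$. Writing $\s_x = \re(x) + \beta \s_{\D\hh}$ and invoking the identity $J^2 = -1$ for $J \in \s_{\D\hh}$ recalled in Section~\ref{sec:imunit}, one has $\im(y)^2 = -\beta^2 = \im(z)^2$, so this term vanishes and conditions~1 and~2 are $*$-conjugates of each other, hence equivalent.

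Finally, I would characterize the solutions of condition~1 by splitting on the nature of $f(y)$. When $f(y) \in \D\hh^\times$, the equation $f(y) z = y f(y)$ is solved by $z = \con(f(y), y)$. When $f(y) = 0$, it is trivially satisfied for every $z$. The interesting case is $f(y) = \epsilon w \in \epsilon\hh^*$: by $\epsilon^2 = 0$, both $y \cdot \epsilon w$ and $\epsilon w \cdot z$ reduce to their $\epsilon$-components and the equation collapses to $y_1 w = w z_1$, which fixes $z_1 = \con(w, y_1) = \con(f(y), y_1)$ while leaving the dual part $z_2$ free subject to $z \in \s_x$, i.e., $z \in T_{\con(f(y), y_1)}$. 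The analogous description of condition~2 follows by repeating the argument with $f^c(z)$ in place of $f(y)$ and the roles of $y$ and $z$ swapped. The main obstacle I anticipate is precisely the equivalence of conditions~1 and~2: it rests essentially on the peculiar fact that $\s_{\D\hh}$, though six-dimensional, still consists entirely of square roots of~$-1$.
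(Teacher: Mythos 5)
Your proposal is correct and follows essentially the same route as the paper's proof: regrouping the spherical-value/derivative formulas to get $(f\cdot g)(y)=f(y)g^\circ_s(x)+\im(y)f(y)g'_s(x)$ and compare it with $f(y)g(z)$, proving the equivalence of the two conditions by applying the $*$-involution together with the fact that elements of $\s_{\D\hh}$ square to $-1$, and settling the invertible/zero-divisor/zero trichotomy via the same $\epsilon^2=0$ computation that underlies Remark~\ref{rmk:conproprerty2}, which the paper simply cites at that point. The only differences are bookkeeping: the paper factors $J(a+Jb)-(a+Jb)K=(a-bK)(-K)+J(a-bK)$ with $a=f^\circ_s(x)$, $b=\beta f'_s(x)$, whereas you conjugate condition~1 and check that its difference from condition~2 telescopes to $(\im(y)^2-\im(z)^2)f'_s(x)^c=0$; both arguments are sound.
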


\begin{proof}
If $x\in \Omega \cap \rr$, then 
\[(f\cdot g)(x)=f^\circ_s(x)g^\circ_s(x)=f(x)g(x)=f(x)g(\con(f(x),x))=f(\con(f^c(x),x))\,g(x)\,.\]
Now suppose instead $x\in \Omega \setminus \rr$ and let $y,z\in\s_x$.
\begin{itemize}
\item From formulae~\eqref{eqn:repr} and~\eqref{eqn:prod}, it follows that
\[(f\cdot g)(y)=f(y)g^\circ_s(x)+\im(y)f(y)g'_s(x)\,.\]
This expression coincides with
\[f(y)g(z)=f(y)g^\circ_s(x)+f(y)\im(z)g'_s(x)\]
whenever $\im(y)f(y)=f(y)\im(z)$, which is equivalent to condition {\it 1.} 
\item We can prove the equivalence between conditions {\it 1.} and {\it 2.}, as follows. For the sake of simplicity, we denote $f^\circ_s(x)$ and $\beta f'_s(x)$ by $a$ and $b$, respectively. Supposing $y=\alpha+\beta J,z=\alpha+\beta K$ for some $J,K\in\s_{\D\hh}$, it holds $f(y)=a+Jb$ and, by formula \eqref{eqn:con}, $f^c(z)=a^c+Kb^c$. Condition {\it 1.} is equivalent to
\[0=J(a+Jb)-(a+Jb)K=-aK-b+J(a-bK)=(a-bK)(-K)+J(a-bK)\,.\]
The last equality is equivalent to
\[0=K(a^c+Kb^c)-(a^c+Kb^c)J\,,\]
which is, in turn, equivalent to condition {\it 2.}
\item The characterization of conditions {\it 1.} and {\it 2.} follows directly from Remark~\ref{rmk:conproprerty2}.
\end{itemize}
\end{proof}

\begin{corollary}\label{cor:pointwiseprod}
Let $f,g \in \mathcal{S}(\Omega)$. The formulae
\begin{align*}
(f\cdot g)(y)&=f(y)\,g(\con(f(y),y))\,,\\
(f\cdot g)(\con(f^c(z),z))&=f(\con(f^c(z),z))\,g(z)
\end{align*}
hold for all $y,z \in \Omega$. As a consequence, $V(f\cdot g)$ includes both the zero set $V(f)$ of $f$ and the set $\{\con(f^c(z),z)\,:\,z\in V(g)\}$.
\end{corollary}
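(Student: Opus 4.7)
Both displayed identities follow directly from Theorem~\ref{thm:pointwiseprod} once the partner point is chosen in the natural way. For the first identity, I would fix $y\in\Omega$ and take $z:=\con(f(y),y)$. The case-analysis at the end of Theorem~\ref{thm:pointwiseprod} shows that this $z$ satisfies condition~\textit{1.}\ in every regime: when $f(y)$ is invertible, $z=\con(f(y),y)$ literally is the unique point listed there; when $f(y)\in\epsilon\hh^*$, writing $f(y)=\epsilon h_2$ one has $\con(f(y),y)=h_2^{-1}yh_2$, and this lies in the required tangent plane $T_{\con(f(y),y_1)}$ because conjugation by the nonzero quaternion $h_2$ preserves the scalar product on $\hh$ (so $y_2\perp\im(y_1)$ implies $h_2^{-1}y_2h_2\perp h_2^{-1}\im(y_1)h_2$); when $f(y)=0$ the condition is trivially fulfilled and $\con(0,y)=y$ by the extended definition. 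The real case $y\in\Omega\cap\rr$ is handled by the first clause of Theorem~\ref{thm:pointwiseprod}, together with the fact that $\con(h,y)=y$ for every $h\in\D\hh$ when $y\in\rr$ (real numbers commute with all dual quaternions, and by definition $\con(0,y)=y$).

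The second identity is proved symmetrically, by fixing $z\in\Omega$ and setting $y:=\con(f^c(z),z)$. The same three-case check based on whether $f^c(z)$ is invertible, a zero divisor, or zero shows that condition~\textit{2.}\ of Theorem~\ref{thm:pointwiseprod} is met, and the real case is treated as above.

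The two claimed inclusions in $V(f\cdot g)$ are then immediate. If $y\in V(f)$ the first identity gives $(f\cdot g)(y)=0\cdot g(\con(0,y))=0$, so $y\in V(f\cdot g)$; if $z\in V(g)$ the second identity gives $(f\cdot g)(\con(f^c(z),z))=f(\con(f^c(z),z))\cdot 0=0$, so $\con(f^c(z),z)\in V(f\cdot g)$.

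There is no genuine obstacle; the whole corollary is a tidy specialization of Theorem~\ref{thm:pointwiseprod}. The only step requiring attention is verifying, in the zero-divisor regime, that $\con(f(y),y)$ really belongs to the tangent plane prescribed by Theorem~\ref{thm:pointwiseprod}, which boils down to the elementary fact that an inner automorphism of $\hh$ is an isometry of the standard scalar product.
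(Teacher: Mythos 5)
Your argument is correct and is essentially the intended one: the corollary is a direct specialization of Theorem~\ref{thm:pointwiseprod} with $z=\con(f(y),y)$, respectively $y=\con(f^c(z),z)$, exactly as you do. The only remark is that your three-case verification (invertible, zero divisor, zero) can be collapsed into one line by invoking Remark~\ref{rmk:conproprerty2}, since the identity $h\,\con_h(l)=lh$ with $h=f(y)$, $l=y$ (resp.\ $h=f^c(z)$, $l=z$) yields condition \textit{1.}\ (resp.\ \textit{2.}) uniformly, with Remark~\ref{rmk:conproprerty} guaranteeing that the partner point stays in $\s_y$ (resp.\ $\s_z$).
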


To deepen the study of $V(f\cdot g)$, we recall Theorem~\ref{thm:zero}: the different types of zeros of $f\cdot g$ correspond to different properties of its spherical derivative $(f\cdot g)'_s$. Therefore, it is useful to establish the next result.

\begin{lemma}
\label{lemma:dersphprod}
Let $f,g \in \mathcal{S}(\Omega)$ and let $x \in \Omega \setminus \rr$.
\begin{enumerate}
\item If $y \in V(f) \cap \s_x$, then $(f\cdot g)'_s(x)=f'_s(x)g(\con(f'_s(x),y^c))$.
\item If $z \in V(g) \cap \s_x$, then $(f\cdot g)'_s(x)=(f^c(z))^c g'_s(x)$.
\item If $y \in V(f) \cap \s_x$ and $z \in V(g) \cap \s_x$, then 
$(f\cdot g)'_s(x)=f'_s(x)\big(\con(f'_s(x),y^c)-z\big)g'_s(x)$.
\end{enumerate}
\end{lemma}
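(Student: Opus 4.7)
The plan is to start from formula \eqref{eqn:prod}, which reads $(f\cdot g)'_s = f^\circ_s g'_s + f'_s g^\circ_s$, and to combine it with the representation \eqref{eqn:repr} applied to $f(y)$ and $g(z)$. Whenever $f(y)=0$ for $y\in\s_x$, the identity $0 = f^\circ_s(x) + \im(y) f'_s(x)$ lets me substitute $f^\circ_s(x) = -\im(y) f'_s(x)$; similarly $g(z)=0$ yields $g^\circ_s(x) = -\im(z) g'_s(x)$. The two auxiliary tools are Remark \ref{rmk:conproprerty} (which says $\con_h$ preserves the real part and commutes with $\im$) and Remark \ref{rmk:conproprerty2}, whose identity $h \con_h(l) = l h$ drives all of the manipulations.

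For part \textit{1.}, I substitute $f^\circ_s(x) = -\im(y) f'_s(x)$ into \eqref{eqn:prod} to obtain
\[(f\cdot g)'_s(x) = f'_s(x) g^\circ_s(x) - \im(y) f'_s(x) g'_s(x).\]
Setting $w := \con(f'_s(x), y^c)$ and expanding $f'_s(x) g(w)$ via \eqref{eqn:repr}, it suffices to show $f'_s(x) \im(w) g'_s(x) = -\im(y) f'_s(x) g'_s(x)$. By Remark \ref{rmk:conproprerty}, $\im(w) = -\con(f'_s(x), \im(y))$, so the required identity collapses to $f'_s(x) \con(f'_s(x), \im(y)) = \im(y) f'_s(x)$, which is exactly the content of Remark \ref{rmk:conproprerty2}.

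For part \textit{2.}, substituting $g^\circ_s(x) = -\im(z) g'_s(x)$ into \eqref{eqn:prod} gives $(f\cdot g)'_s(x) = (f^\circ_s(x) - f'_s(x) \im(z)) g'_s(x)$. A direct check using formula \eqref{eqn:con}, namely $f^c(z) = f^\circ_s(x)^c + \im(z) f'_s(x)^c$, shows $(f^c(z))^c = f^\circ_s(x) - f'_s(x) \im(z)$, which closes this case. For part \textit{3.}, both substitutions apply simultaneously and yield $(f\cdot g)'_s(x) = -\im(y) f'_s(x) g'_s(x) - f'_s(x) \im(z) g'_s(x)$; I then expand the target expression $f'_s(x)(\con(f'_s(x), y^c) - z) g'_s(x)$ by pushing $f'_s(x)$ past $\con$ via Remark \ref{rmk:conproprerty2} (which turns $f'_s(x) \con(f'_s(x), y^c)$ into $y^c f'_s(x)$), and use $\re(y) = \re(z) = \re(x)$ to cancel the two real contributions and match the expression above.

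The only delicate point is that $f'_s(x)$ may well be a zero divisor, in which case $\con(f'_s(x), \cdot)$ is defined through its dual part rather than by conjugation by an invertible element. What rescues the argument from a case split is precisely that the governing identity $h \con_h(l) = l h$ of Remark \ref{rmk:conproprerty2} is valid uniformly on all of $\D\hh \times \D\hh$ under the extended definition of $\con$, so every step above goes through without distinguishing between invertible and zero-divisor spherical derivatives.
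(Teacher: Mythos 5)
Your proposal is correct and follows essentially the same route as the paper: both start from formula \eqref{eqn:prod}, substitute $f^\circ_s(x)=-\im(y)f'_s(x)$ and/or $g^\circ_s(x)=-\im(z)g'_s(x)$, and close each case with the uniform identity $h\,\con_h(l)=lh$ of Remark \ref{rmk:conproprerty2} (plus Remark \ref{rmk:conproprerty} and $\re(y)=\re(z)=\re(x)$), the only difference being that you expand the target expression and match rather than transforming the left-hand side forward. No gaps.
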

\begin{proof}
According to formula~\eqref{eqn:prod}, it holds $(f\cdot g)'_s=f_s^\circ g'_s+f_s' g^\circ_s$.

If $f$ has a zero $y$ in $\s_x$, then $f_s^\circ(x)=-\im(y)f'_s(x)$ and
\begin{align*}
(f\cdot g)'_s(x)&=-\im(y)f'_s(x)g'_s(x)+f'_s(x)g^\circ_s(x) \\
&=f'_s(x)(-\con(f'_s(x),\im(y))g'_s(x)+g^\circ_s(x))\\
&=f'_s(x)g(\con(f'_s(x),y^c))\,,
\end{align*}
where we have taken into account Remark \ref{rmk:conproprerty2}.

If $g$ has a zero $z$ in $\s_x$, then $g_s^\circ(x)=-\im(z)g'_s(x)$ and
\begin{align*}
(f \cdot g)'_s(x)&= f_s^\circ(x) g'_s(x) + f'_s(x) (-\im(z) g'_s(x))\\
&=(f_s^\circ(x)- f'_s(x) \im(z))g'_s(x)\\
&=(f^c(z))^cg'_s(x)\,.
\end{align*}

Finally, if both $f$ and $g$ have zeros in $\s_x$, namely $y$ and $z$, then
\begin{align*}
(f\cdot g)'_s(x)&=-\im(y)f'_s(x) g'_s(x)-f'_s(x) \im(z) g'_s(x) \\
&=-\left(\im(y)f'_s(x)+f'_s(x)\im(z)\right)\,g'_s(x)\\
&=f'_s(x)\big(\con(f'_s(x),\im(y^c))-\im(z)\big)\,g'_s(x)\\
&=f'_s(x)\big(\con(f'_s(x),y^c)-z\big)g'_s(x)\,,
\end{align*}
where we have used again Remark \ref{rmk:conproprerty2}, along with the equalities $-\im(y)=\im(y^c)$ and $\re(y^c)=\re(y)=\re(z)$.
\end{proof}

\begin{theorem}\label{thm:prod}
Let $f,g \in \mathcal{S}(\Omega)$. If $x\in\Omega\cap\rr$ then $x\in V(f\cdot g)$ is equivalent to $x\in V(f)\cup V(g)\cup (V({}^\pi \! f)\cap V({}^\pi \! g))$. If, instead, $x \in \Omega \setminus \rr$, then the following statements hold.
\begin{enumerate}

\item If $V(f) \supseteq \s_x$ or $V(g) \supseteq \s_x$, then $V(f\cdot g)\supseteq\s_x$.

\item If $V(f) \cap \s_x = T_{y_1}$ and $V(g) \cap \s_x=T_{z_1}$, then $V(f\cdot g)\supseteq\s_x$.

\item Suppose $V(f) \cap \s_x = T_{y_1}$ and $V(g) \cap \s_x=\{z\}$.
\begin{itemize}
\item If $z_1=\con(f'_s(x),y_1^c)$, then $V(f\cdot g)\supseteq\s_x$.
\item Otherwise, $V(f\cdot g) \cap \s_x =T_{y_1}$.
\end{itemize}

\item Suppose $V(f) \cap \s_x = T_{y_1}$ and $V(g) \cap \s_x=\emptyset$.
\begin{itemize}
\item If $\con(f'_s(x), y_1^c)\in V({}^\pi \! g)$, then $V(f\cdot g)\supseteq\s_x$.
\item Otherwise, $V(f\cdot g) \cap \s_x =T_{y_1}$.
\end{itemize}

\item Suppose $V(f) \cap \s_x = \{y\}$ and $V(g) \cap \s_x=T_{z_1}$.
\begin{itemize}
\item If $z_1=\con({}^\pi \!f'_s(x_1),y_1^c)$, then $V(f\cdot g)\supseteq\s_x$.
\item Otherwise, $V(f\cdot g) \cap \s_x =T_{y_1}$.
\end{itemize}

\item Suppose $V(f) \cap \s_x = \{y\}$ and $V(g) \cap \s_x=\{z\}$.
\begin{itemize}
\item If $z=\con(f'_s(x),y^c)$, then $V(f\cdot g)\supseteq\s_x$.
\item If $z$ is a point of $T_{\con({}^\pi \! f'_s(x_1),y_1^c)}$ other than $\con(f'_s(x),y^c)$, then $V(f\cdot g) \cap \s_x =T_{y_1}$.
\item Otherwise, $V(f\cdot g) \cap \s_x =\{y\}$.
\end{itemize}

\item Suppose $V(f) \cap \s_x = \{y\}$ and $V(g) \cap \s_x=\emptyset$.
\begin{itemize}
\item If $\con({}^\pi \!f'_s(x_1), y_1^c)\in V({}^\pi \! g)$, then $V(f\cdot g) \cap \s_x =T_{y_1}$.
\item Otherwise, $V(f\cdot g) \cap \s_x =\{y\}$.
\end{itemize}

\item Suppose $V(f) \cap \s_x=\emptyset$ and $V(g) \cap \s_x=T_{z_1}$.
\begin{itemize}
\item If $z_1\in V({}^\pi \! f^c)$, then $V(f\cdot g)\supseteq\s_x$.
\item Otherwise, $V(f\cdot g) \cap \s_x =T_{\con({}^\pi \! f^c(z_1),z_1)}$.
\end{itemize}

\item Suppose $V(f) \cap \s_x=\emptyset$ and $V(g) \cap \s_x=\{z\}$.
\begin{itemize}
\item If $z_1\in V({}^\pi \! f^c)$, then $V(f\cdot g) \cap \s_x =T_{\con(f^c(z),z_1)}$.
\item Otherwise, $V(f\cdot g) \cap \s_x =\{\con((f^c(z),z)\}$.
\end{itemize}

\item If $V(f) \cap \s_x=\emptyset$ and $V(g) \cap \s_x=\emptyset$, then
\[V(f\cdot g) \cap \s_x =\{y\in\s_x\ :\ y_1\in V({}^\pi \! f),\ \con(f(y),y_1)\in V({}^\pi \! g)\}\,.\]
\end{enumerate}

\end{theorem}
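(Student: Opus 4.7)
The proof splits into the real and non-real parts of $\Omega$, and for non-real $x$ is organised by the ten joint configurations of $V(f)\cap\s_x$ and $V(g)\cap\s_x$ classified by Theorem~\ref{thm:zero}.

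For $x\in\Omega\cap\rr$ the slice product reduces to the pointwise product, $(f\cdot g)(x)=f(x)g(x)$. By Remark~\ref{rmk:zerodivisors} this vanishes iff at least one factor is zero or both factors lie in $\epsilon\hh^*$. Since at real points Remark~\ref{rmk:primal} gives $\pi(f(x))={}^\pi\!f(x)$, the second alternative is exactly $x\in V({}^\pi\!f)\cap V({}^\pi\!g)$, yielding the claimed union.

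For $x\in\Omega\setminus\rr$ the key tool is Corollary~\ref{cor:pointwiseprod}, which provides $(f\cdot g)(y)=f(y)\,g(\con(f(y),y))$ for every $y\in\s_x$. The vanishing of this product again splits into three possibilities ($f(y)=0$, $g(\con(f(y),y))=0$, or both factors nonzero zero divisors), which are translated into conditions on $V(f), V(g), V({}^\pi\!f), V({}^\pi\!g)$ via Remark~\ref{rmk:primal} together with the formulae for $\pi\circ\con$ recorded in the remark following Lemma~\ref{lemma:homomorphism}. The case analysis then combines three ingredients: Theorem~\ref{thm:zero} applied to $f$ and $g$, which classifies $f^\circ_s(x),f'_s(x),g^\circ_s(x),g'_s(x)$ as zero, zero divisor, or invertible; Proposition~\ref{prop:zeroprimal}, which identifies precisely the tangent planes of $\s_x$ on which $f$ and $g$ take zero-divisor values; and Lemma~\ref{lemma:dersphprod}, which yields a clean expression for $(f\cdot g)'_s(x)$ whenever $f$ or $g$ has a zero in $\s_x$. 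Once the algebraic type of $(f\cdot g)'_s(x)$ is known, Theorem~\ref{thm:zero} pins down the shape of $V(f\cdot g)\cap\s_x$, and Corollary~\ref{cor:pointwiseprod} evaluated at an explicit $y\in V(f)\cap\s_x$ or at $\con(f^c(z),z)$ for $z\in V(g)\cap\s_x$ identifies the particular tangent plane or singleton. Items 1 and 2 follow immediately from $(f\cdot g)^\circ_s=f^\circ_s g^\circ_s+\im^2 f'_s g'_s$ and $(f\cdot g)'_s=f^\circ_s g'_s+f'_s g^\circ_s$, since every summand is either zero or a product of zero divisors; item~10 is read off directly from the pointwise formula, because in that case the only way $(f\cdot g)(y)$ can vanish is for both $f(y)$ and $g(\con(f(y),y))$ to be nonzero zero divisors.

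The central obstacle is item~6, where $V(f)\cap\s_x=\{y\}$, $V(g)\cap\s_x=\{z\}$, and Lemma~\ref{lemma:dersphprod}(3) gives
\[
(f\cdot g)'_s(x)=f'_s(x)\,\bigl(\con(f'_s(x),y^c)-z\bigr)\,g'_s(x)\,.
\]
Since $f'_s(x)$ and $g'_s(x)$ are invertible, the type of $(f\cdot g)'_s(x)$ is controlled by the middle factor, whose three alternatives (zero, nonzero zero divisor, invertible) correspond precisely to the three subcases. The main difficulty, replicated in the split subcases of items 3--5 and 7--9, lies in turning the vanishing of the primal part of such a middle factor into the stated geometric condition on $z$: for item~6 one must verify the identity $\pi(\con(f'_s(x),y^c))=\con({}^\pi\!f'_s(x_1),y_1^c)$ through the $*$-homomorphism $\pi$, so that $\con(f'_s(x),y^c)-z\in\epsilon\hh^*$ becomes exactly $z\in T_{\con({}^\pi\!f'_s(x_1),y_1^c)}\setminus\{\con(f'_s(x),y^c)\}$, and then locate $y$ or $\con(f^c(z),z)$ inside the resulting tangent plane or singleton in $V(f\cdot g)\cap\s_x$.
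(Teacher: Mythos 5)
Your proposal is correct and follows essentially the same route as the paper: the real case via the pointwise product and the zero-divisor structure of $\D\hh$, and for $x\in\Omega\setminus\rr$ a case-by-case combination of Theorem~\ref{thm:zero}, Corollary~\ref{cor:pointwiseprod} and Lemma~\ref{lemma:dersphprod}, with the type of $(f\cdot g)'_s(x)$ determining the shape of $V(f\cdot g)\cap\s_x$ and the compatibility $\pi(\con(f'_s(x),y^c))=\con({}^\pi\!f'_s(x_1),y_1^c)$ converting the middle factor's degeneracy into the stated conditions. Your direct treatment of items 1--2 via formula~\eqref{eqn:prod} is a negligible variation on the paper's argument.
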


\begin{proof}
 The first statement follows follows from the fact that
\[(f \cdot g)(x)=f(x)g(x)\]
for all $x\in\Omega\cap\rr$. To prove the second statement, we proceed step by step by step and we repeatedly apply Theorem~\ref{thm:zero}, Corollary~\ref{cor:pointwiseprod} and Lemma~\ref{lemma:dersphprod}.

\begin{enumerate}
\item If $V(f) \supseteq \s_x$ then $V(f\cdot g) \supseteq \s_x$. If $V(g) \supseteq \s_x$, take any $z\in\s_x$. Then $V(f\cdot g)\cap\s_x$ includes the point $\con(f^c(z),z)$ and $g'_s(x)=0$. Since
\[(f\cdot g)'_s(x)=(f^c(z))^c g'_s(x)=0\,,\]
it follows that $V(f\cdot g) \supseteq \s_x$.

\item If $V(f) \cap \s_x = T_{y_1}$ and $V(g) \cap \s_x=T_{z_1}$, then $V(f\cdot g) \supseteq T_{y_1}$ and $f_s'(x),g_s'(x)\in\epsilon\hh^*$. Since
\[(f\cdot g)'_s(x)=f'_s(x)\big(\con(f'_s(x),y_1^c)-z_1\big)g'_s(x)=0\,,\]
we immediately conclude that $V(f\cdot g)\supseteq\s_x$.

\item If $V(f) \cap \s_x = T_{y_1}$ and $V(g) \cap \s_x=\{z\}$, then $V(f\cdot g) \supseteq T_{y_1}$, $f_s'(x)\in\epsilon\hh^*$ and $g_s'(x)$ is invertible. Either $V(f\cdot g)\supseteq\s_x$ or $V(f\cdot g) \cap \s_x = T_{y_1}$, depending on whether $(f\cdot g)'_s(x)$ vanishes or not. But
\[(f\cdot g)'_s(x)=f'_s(x)\big(\con(f'_s(x),y_1^c)-z\big)g'_s(x)\,,\]
vanishes if, and only if, the second factor belongs to $\epsilon\hh$. This happens if, and only if, $z_1=\con(f'_s(x), y_1^c)$.

\item If $V(f) \cap \s_x = T_{y_1}$ and $V(g) \cap \s_x=\emptyset$, then $V(f\cdot g) \supseteq T_{y_1}$ and $f_s'(x)\in\epsilon\hh^*$. Either $V(f\cdot g)\supseteq\s_x$ or $V(f\cdot g) \cap \s_x = T_{y_1}$, depending on whether $(f\cdot g)'_s(x)$ vanishes or not. The expression
\[(f\cdot g)'_s(x)=f'_s(x)g(\con(f'_s(x),y_1^c))\,,\]
vanishes if, and only if, the second factor belongs to $\epsilon\hh$. This happens if, and only if, $\con(f'_s(x), y_1^c)\in V({}^\pi \! g)$.

\item If $V(f) \cap \s_x = \{y\}$ and $V(g) \cap \s_x=T_{z_1}$, then $V(f\cdot g) \supseteq \{y\}$, $f_s'(x)$ is invertible and $g_s'(x)\in\epsilon\hh^*$. The expression
\[(f\cdot g)'_s(x)=f'_s(x)\big(\con(f'_s(x),y^c)-z_1\big)g'_s(x)\,.\]
vanishes if, and only if, the second factor belongs to $\epsilon\hh$; this is, in turn, equivalent to $z_1=\con({}^\pi \!f'_s(x_1),y_1^c)$. If this is the case, then $V(f\cdot g)\supseteq\s_x$. Otherwise, $(f\cdot g)'_s(x)\in\epsilon\hh^*$ and $V(f\cdot g) \cap \s_x = T_{y_1}$.

\item If $V(f) \cap \s_x = \{y\}$ and $V(g) \cap \s_x = \{z\}$, then $V(f\cdot g) \supseteq \{y\}$ and $f_s'(x),g_s'(x)$ are invertible. By the equality
\[(f\cdot g)'_s(x)=f'_s(x)\big(\con(f'_s(x),y^c)-z\big)g'_s(x)\,,\]
there are three possibilities. If $z=\con(f'_s(x),y^c)$ then $(f\cdot g)'_s(x)=0$ and $V(f\cdot g)\supseteq\s_x$. If not, but if we still have $z_1=\con({}^\pi \!f'_s(x_1),y_1^c)$, then $(f\cdot g)'_s(x)\in \epsilon \hh^*$ and $V(f\cdot g) \cap \s_x =T_{y_1}$. Otherwise, $(f\cdot g)'_s(x)$ is invertible and $V(f\cdot g) \cap \s_x = \{y\}$.

\item If $V(f) \cap \s_x = \{y\}$ and $V(g) \cap \s_x=\emptyset$, then $V(f\cdot g) \supseteq \{y\}$ and $f_s'(x)$ is invertible. Either $V(f\cdot g) \cap \s_x = T_{y_1}$ or $V(f\cdot g) \cap \s_x = \{y\}$, depending on whether or not 
\[(f\cdot g)'_s(x)=f'_s(x)g(\con(f'_s(x),y^c))\]
belongs to $\epsilon\hh^*$. This happens if, and only if, $g(\con(f'_s(x),y^c))\in\epsilon\hh^*$, which is, in turn, equivalent to $\con({}^\pi \!f'_s(x_1), y_1^c)\in V({}^\pi \! g)$.

\item If $V(f) \cap \s_x=\emptyset$ and $V(g) \cap \s_x=T_{z_1}$, then $V(f\cdot g) \supseteq \{\con(f^c(z_1),z_1)\}$ and $g_s'(x)\in\epsilon\hh^*$. The expression
\[(f\cdot g)'_s(x)=(f^c(z_1))^c g'_s(x)\]
vanishes if, and only if, $f^c(z_1)\in\epsilon\hh$; this is, in turn, equivalent to $z_1\in V({}^\pi \! f^c)$. If this is the case, then $V(f\cdot g)\supseteq\s_x$. Otherwise, $(f\cdot g)'_s(x)\in\epsilon\hh^*$ and $V(f\cdot g) \cap \s_x = T_{\con({}^\pi \! f^c(z_1),z_1)}$.

\item If $V(f) \cap \s_x=\emptyset$ and $V(g) \cap \s_x=\{z\}$, then $V(f\cdot g) \cap \s_x \supseteq \{\con(f^c(z),z)\}$ and $g_s'(x)$ is invertible. The expression
\[(f\cdot g)'_s(x)=(f^c(z))^c g'_s(x)\]
belongs to $\epsilon\hh^*$ if, and only if, the first factor does. This is, in turn, equivalent to $z_1\in V({}^\pi \! f^c)$. If this is the case, then $V(f\cdot g) \cap \s_x =T_{\con(f^c(z),z_1)}$. Otherwise, $V(f\cdot g) \cap \s_x =\{\con(f^c(z),z)\}$.

\item If $V(f) \cap \s_x=\emptyset$ and $V(g) \cap \s_x=\emptyset$, then for all $y\in\s_x$ it holds
\[(f\cdot g)(y)=f(y)\,g(\con(f(y),y))\,.\]
This product vanishes if, and only if, both factors belong to $\epsilon\hh^*$. This happens if, and only if, $y_1\in V({}^\pi \! f)$ and $\con(f(y),y_1)\in V({}^\pi \! g)$.
\end{enumerate}
\end{proof}

\begin{table}
\begin{center}
\begin{tabular}{|c||c|c|c|c|}
\hline
\boldmath$f \cdot g$ & \boldmath$\emptyset$ & \boldmath$z$ & \boldmath$T_{z_1}$ & \boldmath$\mathbb{S}_x$\\
\hline
\hline
\rule[-4mm]{0mm}{1cm}
\boldmath$\emptyset$ & $\emptyset \vee \{\star\} \vee T_{\star} \vee \mathbb{S}_{x}$ & $\{\con(f^c(z),z)\} \vee T_{\con(f^c(z),z_1)}$ & $T_{\con({}^\pi \! f^c(z_1),z_1)} \vee \mathbb{S}_x$ & $\mathbb{S}_x$ \\ 
\hline
\rule[-4mm]{0mm}{1cm}
\boldmath$y$ &  $\{y\} \vee T_{y_1}$ & $\{y\} \vee T_{y_1} \vee \mathbb{S}_x$ & $T_{y_1} \vee \mathbb{S}_x$ & $\mathbb{S}_x$\\
\hline
\rule[-4mm]{0mm}{1cm}
\boldmath$T_{y_1}$ & $T_{y_1} \vee \mathbb{S}_x $ & $T_{y_1} \vee\mathbb{S}_x$ & $\mathbb{S}_x$ & $\mathbb{S}_x$\\
\hline
\rule[-4mm]{0mm}{1cm}
\boldmath$\mathbb{S}_x$ & $\mathbb{S}_x$ & $\mathbb{S}_x$ & $\mathbb{S}_x$ & $\mathbb{S}_x$\\
\hline
\end{tabular}
\end{center}
\caption{Scheme from Theorem~\ref{thm:prod}. The first column indicates the nature of the zeros of $f$ in $\s_x$, the first row the nature of the zeros of $g$ in $\s_x$. Each crossing lists the possible intersections between the zero set of $f\cdot g$ and $\s_x$, without mentioning the conditions that distinguish the various possibilities.}
\label{tab:zeriprod}
\end{table}

\begin{examples}[Case {\it 6.}]
Fix $J \in \s_{\D\hh}$. For all $x\in \Q_{\D\hh}$, let
\[f(x)=x-i,\quad g(x)=x-J\,.\]
Then $V(f)=\{i\}$, $V(g)=\{J\}$ and $f'_s(i)=1={}^{\pi}\!f'_s(i)$. Let us determine the zeros of the product
\[(f\cdot g)(x)=x^2-x(i+J)+iJ\,.\]
Since $V({}^{\pi}\!f)=\{i\}$ and $V({}^{\pi}\!g)=\{J_1\}$ are both included in $\s_{\D\hh}$, the zero set $V(f\cdot g)$ must also be included in $\s_{\D\hh}$. There are three possibilities.
\begin{itemize}
\item If $J=\con(1, -i)=-i$ then $V(f\cdot g)=\s_{\D\hh}$.
\item If $J$ is a point of $T_{-i}$ other than $-i$, then $V(f\cdot g)=T_i$.
\item Otherwise, $V(f\cdot g)=\{i\}$.
\end{itemize}
\end{examples} 

\begin{example}[Case {\it 9.}]
For all $x\in \Q_{\D\hh}$, let
\[f(x)=2x-2i+\epsilon i,\quad g(x)=x+i+\epsilon j\,.\]
The function $f$ has no zeros in $\Q_{\D\hh}$, while $V(g)=\{-i-\epsilon j \}\subset\s_{\D\hh}$. Let us determine the zeros of the product
\[(f \cdot g)(x)=2x^2+x(\epsilon i+2\epsilon j)+2-2\epsilon k-\epsilon\,.\]
For $\s_{\D\hh}$, we compute $f^c(x)=2x+2i-\epsilon i$, whence $V({}^{\pi}\!f^c)=\{-i\}$, and
\[\con(f^c(-i-\epsilon j),-i)=\con(-\epsilon(i+2j),-i)=(-i-2j)^{-1}(-i)(-i-2j)=\frac{3}{5}i-\frac{4}{5}j\,.\]
Thus, $V(f\cdot g)\cap\s_{\D\hh}=T_{\frac{3}{5}i-\frac{4}{5}j}$. Since $V({}^{\pi}\!f)=\{i\}$ and $V({}^{\pi}\!g)=\{-i\}$ are both included in $\s_{\D\hh}$, there are no other possible zeros of $f\cdot g$ and
\[V(f\cdot g)=T_{\frac{3}{5}i-\frac{4}{5}j}\,.\]
\end{example}

\begin{example}[Cases {\it 4.} and {\it 10.}]
For all $x\in \Q_{\D\hh}$, let
\[f(x)=x\epsilon-2\epsilon k,\quad g(x)=x-i+\epsilon i\,.\]
Then $V(f)=T_{2k}$ and $g$ never vanishes in $\Q_{\D\hh}$. Let us determine the zero set of the product
\[(f \cdot g)(x)=x^2\epsilon-x\epsilon (i+2k)+2\epsilon j\,.\]
We first focus on $2\s_{\D\hh}\ni2k$: we compute $f'_s(2k)=\epsilon$ and $\con(\epsilon,2k)=2k$, which does not belong to $V({}^{\pi}\!g)=\{i\}$; we conclude that
\[V(f\cdot g)\cap2\s_{\D\hh}=T_{2k}\,.\]
Since $V({}^{\pi}\!f)=\hh$ and $V({}^{\pi}\!g)=\{i\}$, the other possible zeros $y\in\Q_{\D\hh}$ of $f\cdot g$ are determined by the equation $\con(f(y),y_1)=i$, which is equivalent to 
\[y_1=\con(f^c(i),i)=\con(\epsilon(i+2k),i)=(i+2k)^{-1}i(i+2k)=-\frac35 i+\frac45 k\,.\]
We conclude that
\[V(f\cdot g)=T_{2k}\cup T_{-\frac35 i+\frac45 k}\,.\]
\end{example}

%%%%%%%%%%%%%%%%%%%%%%%%%%%%%

\section{Zeros of slice regular functions}\label{sec:factorization}

In this section, we study the discreteness of the zeros of slice regular functions. We also show that they can be factored out, a fact which will be particularly useful for subsequent applications to motion polynomials.

Our discreteness results require two preliminary definitions.

\begin{definition}
If $\Omega=\Omega_D$, where $D$ is an open connected subset of $\cc$ that intersects the real line $\rr$ and is preserved by complex conjugation, then $\Omega$ is called a \emph{slice domain}. If $\Omega=\Omega_D$, where $D$ is an open subset of $\cc$ that does not intersect $\rr$ and has two connected components swapped by complex conjugation, then $\Omega$ is called a \emph{product domain}.
\end{definition}

It is not restrictive to study slice regular functions  on either slice domains or product domains, because all circular open subsets of $\D\hh$ are unions of slice domains and product domains. We now come to the announced discreteness results, where $\cc_J^+:=\{\alpha+J\beta:\alpha,\beta\in\rr,\beta>0\}$ and $\cc_J^-:=\{\alpha+J\beta:\alpha,\beta\in\rr,\beta<0\}$.

\begin{theorem}
Let $\Omega=\Omega_D$, where $D$ is an open subset of $\cc$, and let $f\in\mathcal{SR}(\Omega)$. If $\Omega$ is a slice domain then either
\begin{itemize}
\item[(1)] for each $J\in\s_{\D\hh}$, the intersection $V(f)\cap\cc_J$ is closed and discrete in $\Omega_J$; or
\item[(2)] $f$ vanishes identically.
\end{itemize}
If $\Omega$ is a product domain then, in addition to cases (1) and (2), there is one further possibility:
\begin{itemize}
\item[(3)] There exists $J\in\s_{\D\hh}$ such that
\begin{equation}\label{eq:fin}
f_{|_{\Omega\cap\cc_J^+}}\equiv0\mathrm{\ and\ }V(f)\cap\cc_J^-\mathrm{\ is\ closed\ and\ discrete\ in\ }\Omega_J.
\end{equation}
If ${}^{\pi}\!f'_s\not \equiv0$ then $J$ is unique; if instead ${}^{\pi}\!f'_s\equiv0$, then the imaginary units having the same property are exactly the elements of $T_{J_1}$. For all other $K\in\s_{\D\hh}$, the intersection $V(f)\cap\cc_K$ is closed and discrete in $\Omega_K$.
\end{itemize}
\end{theorem}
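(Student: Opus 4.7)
The plan is to reduce the problem, via the splitting basis construction, to the classical identity principle for holomorphic functions of one complex variable, and then to isolate the exceptional case $(3)$ by analyzing the algebraic equation that encodes simultaneous vanishing on two distinct positive half-slices. First, fix $K\in\s_{\D\hh}$ and a splitting basis $\{1,K,e_1^K,Ke_1^K,e_2^K,Ke_2^K,e_3^K,Ke_3^K\}$, so that by Lemma~\ref{lemma:splitting} the restriction $f|_{\Omega_K}$ decomposes as $\sum_{l=0}^{3} f_l\, e_l^K$ with each $f_l:\Omega_K\to\cc_K$ holomorphic. If $V(f)\cap\Omega_K$ accumulates at a point $p$, then every $f_l$ vanishes on a set accumulating at $p$, so the classical identity principle forces each $f_l$ to vanish on the connected component of $\Omega_K$ containing $p$. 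Moreover, if $f|_{\Omega_K}\equiv 0$ then, for every $x=\alpha+\beta K\in\Omega_K\setminus\rr$, both $f(x)$ and $f(x^c)$ vanish, whence~\eqref{eq:der} and~\eqref{eq:val} give $f_s^\circ(x)=f_s'(x)=0$; since $f_s^\circ,f_s'$ are constant on spheres and every sphere of $\Omega$ meets $\Omega_K$, we conclude $f\equiv 0$ on $\Omega$.

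When $\Omega$ is a slice domain, $\Omega_K$ is connected and this immediately produces the dichotomy $(1)$--$(2)$. When $\Omega$ is a product domain, the same argument applied separately to the two components $\Omega_K^+$ and $\Omega_K^-$ shows that if $V(f)\cap\Omega_K$ accumulates in both halves, then $f|_{\Omega_K}\equiv 0$ and we fall in case $(2)$; if accumulation occurs in only one half, reorienting $K$ if needed gives an element $J\in\s_{\D\hh}$ with $f|_{\Omega\cap\cc_J^+}\equiv 0$, while $V(f)\cap\cc_J^-$ is closed and discrete in $\Omega_J$ by the very first step, placing us in case $(3)$.

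It remains to identify exactly which $J$ can serve this role. Suppose $K\in\s_{\D\hh}$ also satisfies $f|_{\Omega\cap\cc_K^+}\equiv 0$. For every sphere $\s_x\subseteq\Omega$ (a product domain contains whole spheres as soon as it contains one of their points), writing $x=\alpha+\beta J\in\Omega\cap\cc_J^+$ in~\eqref{eqn:repr} yields $f_s^\circ(x)=-\beta J f_s'(x)$, while evaluating~\eqref{eqn:repr} at the point $\alpha+\beta K$ of the same sphere gives $f_s^\circ(x)=-\beta K f_s'(x)$. Subtraction yields $(K-J)f_s'(y)=0$ for every $y$ in every sphere of $\Omega$. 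By Remark~\ref{rmk:zerodivisors}, two subcases arise: if ${}^\pi\!f_s'(y_0)\neq 0$ for some $y_0$, then $f_s'(y_0)$ is invertible in $\D\hh$ and $K=J$, so $J$ is unique; if instead ${}^\pi\!f_s'\equiv 0$, then $f_s'(y)\in\epsilon\hh$ for all $y$ and the equation forces merely $K_1=J_1$, i.e., $K\in T_{J_1}$. The reverse inclusion in the degenerate subcase is immediate: for $K\in T_{J_1}$ we have $K-J\in\epsilon\hh$, hence $(K-J)f_s'(x)\in\epsilon\hh\cdot\epsilon\hh=\{0\}$, and substituting back into~\eqref{eqn:repr} gives $f(\alpha+\beta K)=\beta(K-J)f_s'(x)=0$ on $\Omega\cap\cc_K^+$.

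Finally, for every $K\in\s_{\D\hh}$ lying outside the set identified above, the opening paragraph forces $V(f)\cap\Omega_K$ to be closed and discrete: any accumulation would make $f$ vanish identically on one of the halves $\Omega_K^\pm$ and, after possibly reorienting $K$, produce an element of $\s_{\D\hh}$ satisfying~\eqref{eq:fin}, contradicting the uniqueness just established. I expect the main obstacle to be the extraction of the tangent plane $T_{J_1}$ from the equation $(K-J)f_s'(y)=0$ in the degenerate subcase; this step depends crucially on translating the vanishing of a dual-quaternionic product into a condition on primal parts, using the explicit description of zero divisors in $\D\hh$ from Remark~\ref{rmk:zerodivisors}.
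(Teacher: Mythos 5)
Your argument is essentially correct in its overall architecture, but it takes a more self-contained route than the paper: the paper obtains the whole trichotomy, including all the discreteness assertions, by invoking Theorem 4.11 of the reference \cite{gpsalgebra}, and only proves the uniqueness clause of case (3) by solving $(J'-J)f'_s\equiv0$ --- exactly the computation in your third paragraph, with the same dichotomy on ${}^{\pi}\!f'_s$ and the same use of the description of zero divisors. Your replacement of that citation by Lemma~\ref{lemma:splitting} plus the classical identity principle, combined with the observation that vanishing of $f$ on a whole slice forces $f^\circ_s\equiv0\equiv f'_s$ and hence $f\equiv0$, is a legitimate and pleasantly elementary derivation of cases (1)--(3); what the paper's approach buys is brevity and the half-slice bookkeeping already done in the cited theorem.

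There is, however, one step that does not go through as written: the closing paragraph. If $K$ lies outside $\{J\}$ (resp.\ outside $T_{J_1}$ in the degenerate case), an accumulation of $V(f)\cap\cc_K$ may occur in $\cc_K^-$ with $-K=J$ (resp.\ $-K\in T_{J_1}$); the reoriented unit you produce is then $J$ itself (or an element of $T_{J_1}$), so no contradiction with the uniqueness statement arises. Indeed, for $K=-J$ one has $\cc_K=\cc_J$ and $V(f)\cap\cc_K\supseteq\Omega\cap\cc_J^+$, an open half-slice, which is certainly not discrete; the same happens for every $K\in T_{-J_1}$ in the degenerate case. The discreteness assertion for ``all other $K$'' is therefore only correct when read on half-slices, i.e.\ $V(f)\cap\cc_K^+$ is closed and discrete for every $K\notin\{J\}$ (resp.\ $K\notin T_{J_1}$) --- which is the form provided by the cited Theorem 4.11 and is exactly what your accumulation argument proves once restricted to $\cc_K^+$. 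Two smaller points deserve a sentence each: in the degenerate subcase the equation $(K-J)f'_s\equiv0$ forces $K_1=J_1$ only if $f'_s\not\equiv0$, which does hold in case (3) because $f'_s\equiv0$ would make $f$ constant on spheres and hence identically zero, contradicting the discreteness of $V(f)\cap\cc_J^-$; and to conclude that every $K\in T_{J_1}$ enjoys the full property \eqref{eq:fin}, not just the vanishing on $\Omega\cap\cc_K^+$, note that $V(f)\cap\cc_K^-$ must be discrete, since otherwise $f$ would vanish on all of $\Omega_K$ and hence identically.
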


\begin{proof}
After applying~\cite[Theorem 4.11]{gpsalgebra}, it only remains to study the uniqueness of $J$ in case {\it (3)} by finding the solutions $J'$ of the equation $(J'-J)f'_s\equiv0$. If at least one value of $f'_s$ is invertible (that is, ${}^{\pi}\!f'_s\not \equiv0$), then the only solution is $J'=J$. Otherwise, the image of $f'_s$ is included in the set $\epsilon\hh$ and ${}^{\pi}\!f'_s\equiv0$. In this situation, the solutions are the $J'\in\s_{\D\hh}$ such that $J'-J\in\epsilon\hh$ or, equivalently, the elements of $T_{J_1}$.
\end{proof}

\begin{theorem}
Let $f\in\mathcal{SR}(\Omega)$, where either
\begin{itemize}
\item $\Omega\subseteq Q_{\D\hh}$ is a slice domain and ${}^{\pi}\!f\not\equiv0$; or
\item $\Omega\subseteq Q_{\D\hh}$ is a product domain and $N({}^{\pi}\!f)\not\equiv0$
\end{itemize}
Then the zero set $V(f)$ is a union of singletons $\{y\}$, tangent planes $T_{y_1}$ or ``spheres'' $\s_y$, each isolated from the rest of $V(f)$. If, moreover, $f=P_{|_{\Q_{\D\hh}}}$ for some polynomial $P(t)\in\D\hh[t]$, then the union is finite.
\end{theorem}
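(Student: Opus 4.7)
The plan is to combine the slicewise classification of zeros from Theorem~\ref{thm:zero} with a reduction to the quaternionic normal function $N({}^{\pi}\!f)$, whose zero set structure in $\hh$ will force the required isolation in $\D\hh$. By Theorem~\ref{thm:zero}, every nonempty intersection $V(f)\cap\s_x$ is one of $\{y\}$, $T_{y_1}$, or $\s_x$; consequently $V(f)$ is automatically a disjoint union of sets of the three announced types, and only topological isolation of each piece needs to be proved.

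I next exploit the identity $N({}^{\pi}\!f)={}^{\pi}\!N(f)$, which is immediate from Corollary~\ref{cor:piomo}: since $N(f)$ lies in the center $S_{\D\rr}(\Omega)$, its primal part $N({}^{\pi}\!f)$ is a slice preserving slice regular function on $\Omega\cap\hh$. Under the slice-domain hypothesis, ${}^{\pi}\!f\not\equiv 0$, combined with the classical absence of zero divisors among quaternionic slice regular functions on a slice domain (see \cite{librospringer}), forces $N({}^{\pi}\!f)\not\equiv 0$; under the product-domain hypothesis this is assumed outright. A nonzero slice preserving slice regular function is induced by a nonzero $\rr_\cc$-valued holomorphic stem function, whose complex zeros are isolated in each connected component of its domain, so $V(N({}^{\pi}\!f))$ is a locally finite union of ``spheres'' $\s_{y_1}\cap\hh$, each isolated in $\hh$.

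To lift isolation to $\D\hh$, take $y\in V(f)$. By Remark~\ref{rmk:primal}, $N({}^{\pi}\!f)(y_1)=\pi(N(f)(y))=0$, so $y_1$ belongs to an isolated $\s_{y_1}\cap\hh$ in $V(N({}^{\pi}\!f))$. Pick an open $B\subset\hh$ with $V(N({}^{\pi}\!f))\cap B=\s_{y_1}\cap\hh$ and set $U:=\pi^{-1}(B)\cap\Omega$. The main subtlety is the geometric identity $\pi^{-1}(\s_{y_1}\cap\hh)\cap\Q_{\D\hh}=\s_y$, which I will verify using the Cartesian description of $\Q_{\D\hh}$ from Section~\ref{sec:imunit}: the orthogonality constraint on primal and dual parts makes the preimage of $\alpha+\beta\s_\hh$ inside $\Q_{\D\hh}$ consist exactly of the full $\D\hh$-sphere $\alpha+\beta\s_{\D\hh}$. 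Once this is in place, every $w\in V(f)\cap U$ satisfies $w_1\in\s_{y_1}\cap\hh$, hence $w\in\s_y$, so $V(f)\cap U=V(f)\cap\s_y\in\{\{y\},T_{y_1},\s_y\}$ by the first step; this proves isolation of each piece.

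For the polynomial case, $f=P_{|_{\Q_{\D\hh}}}$ of degree $d$ gives ${}^{\pi}\!f=\mathrm{primal}(P)_{|_\hh}$ of degree at most $d$, whence $N({}^{\pi}\!f)$ is a polynomial in $\rr[t]$ of degree at most $2d$ (see Remark~\ref{rkm:motionnorm}), with only finitely many roots. Hence $V(N({}^{\pi}\!f))$ has finitely many spherical components and $V(f)$ is a finite union of pieces.
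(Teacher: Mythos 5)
Your proposal is correct, but it takes a genuinely different route from the paper's. The paper also begins with the slicewise classification of Theorem~\ref{thm:zero} (together with Proposition~\ref{prop:zeroprimal}), matching each singleton, tangent plane or sphere of $V(f)$ with a point or $2$-sphere of $V({}^{\pi}\!f)$, but it then concludes by invoking the known structure theorems for zero sets of \emph{quaternionic} slice regular functions, namely \cite[Theorem 3.12]{librospringer} on slice domains (using ${}^{\pi}\!f\not\equiv0$) and \cite[p.~1681]{perotti} on product domains (using $N({}^{\pi}\!f)\not\equiv0$), leaving the lifting of isolation from $\hh$ to $\Q_{\D\hh}$ implicit. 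You instead bypass the structure of $V({}^{\pi}\!f)$ altogether: you project every zero of $f$ into $V(N({}^{\pi}\!f))$ (via $N({}^{\pi}\!f)={}^{\pi}\!N(f)$ and $N(f)(y)=0$ whenever $f(y)=0$, by Corollary~\ref{cor:pointwiseprod}), reduce to the discreteness of the zeros of the nonzero $\rr_\cc$-valued holomorphic stem function of $N({}^{\pi}\!f)$, and make the lifting explicit through the identity $\pi^{-1}(\s_{y_1}\cap\hh)\cap\Q_{\D\hh}=\s_y$. Your route buys a more self-contained, elementary argument (only the isolation of zeros of one complex holomorphic function is needed, plus the standard absence of zero divisors on quaternionic slice domains to get $N({}^{\pi}\!f)\not\equiv0$ in the first case) and it supplies the isolation step that the paper treats as immediate; the paper's route buys brevity by citing existing quaternionic results. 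A few minor points to tidy when writing this up: in the product-domain case, observe that the stem-function symmetry $F(\overline z)=\overline{F(z)}$ prevents $N({}^{\pi}\!f)$ from vanishing on just one component of $D$, so its zeros are isolated on both components; handle the degenerate case $y\in\rr$, where $\pi^{-1}(\{y_1\})\cap\Q_{\D\hh}=\{y_1\}$ rather than a sphere; and for the finiteness statement cite the general fact that the norm of a nonzero quaternionic polynomial is a real polynomial of degree at most $2d$, since Remark~\ref{rkm:motionnorm} is stated only for motion polynomials.
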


\begin{proof}
By Theorem~\ref{thm:zero}, the zero set $V(f)$ is a union of singletons $\{y\}$, tangent planes $T_{y_1}$ or ``spheres'' $\s_y$. Moreover, each singleton $\{y\}$ included in $V(f)$ corresponds to a singleton $\{y_1\}$ included in $V({}^{\pi}\!f)$; each tangent plane $T_{y_1}$ or ``sphere'' $\s_y$ corresponds to a $2$-sphere $\s_{y_1}$ included in $V({}^{\pi}\!f)$.

We claim that, under either of our two hypotheses, $V({}^{\pi}\!f)$ consists of isolated points or isolated $2$-spheres of the form $\s_{y_1}$, whence the first statement immediately follows.

As for the second statement, the equality $f=P_{|_{\Q_{\D\hh}}}$ implies that ${}^{\pi}\!f$ coincides with the quaternionic polynomial $\mathrm{primal}(P)(t)$, which has finitely many isolated zeros or $2$-spheres of zeros (see, e.g., \cite[Proposition 3.30]{librospringer}).

We prove our previous claim as follows.
\begin{itemize}
\item If $\Omega$ is a slice domain in $Q_{\D\hh}$ then $\Omega\cap\hh$ is a slice domain in $\hh$. If ${}^{\pi}\!f\not\equiv0$, then $V({}^{\pi}\!f)$ consists of isolated points $y_1$ or isolated $2$-spheres of the form $\s_{y_1}$ by~\cite[Theorem 3.12]{librospringer}.
\item If $\Omega$ is a product domain in $Q_{\D\hh}$ then $\Omega\cap\hh$ is a product domain in $\hh$. If $N({}^{\pi}\!f)\not\equiv0$, then, according to~\cite[page 1681]{perotti}, $V({}^{\pi}\!f)$ consists of isolated points $y_1$ or isolated $2$-spheres of the form $\s_{y_1}$.%\qedhere
\end{itemize}
This completes the proof.
\end{proof}

All examples provided in Sections~\ref{sec:zeros} and~\ref{sec:products} are slice regular functions on slice domains and they have zero sets of the types described in the last two theorems. We now provide a few pathological examples that do not fulfill the hypotheses of these theorems.

\begin{example}
Consider the slice regular functions
\[f(x)=1+\frac{\im(x)}{|\im(x)|}i,\quad g(x)=f(x)\cdot\epsilon=\epsilon+\frac{\im(x)}{|\im(x)|}\epsilon i\]
on the product domain $Q_{\D\hh}\setminus\rr=\Omega_{\cc^+}$: for each $J\in\s_{\D\hh}$, it holds $f_{|_{\cc_J^+}}\equiv1+Ji$ and $g_{|_{\cc_J^+}}\equiv(1+Ji)\epsilon$. As a consequence, 
\[V(f)=\cc_i^+,\quad V(g)=\bigcup_{J\in T_i}\cc_J^+\,.\]
We point out that ${}^{\pi}\!f'_s\not\equiv0\not\equiv{}^{\pi}\!f$ and $N({}^{\pi}\!f)\equiv0$, while ${}^{\pi}\!g'_s,{}^{\pi}\!g,N({}^{\pi}\!g)$ all vanish identically.
\end{example}

We now proceed forward, aiming at factoring out the zeros of slice regular functions.

\begin{definition}%\label{def:factor}
Let $\Omega=\Omega_D\subseteq Q_{\D\hh}$. For $f,h\in\mathcal{S}(\Omega)$, we say that $h$ is a \emph{left factor} of $f$ if there exists $g\in\mathcal{S}(\Omega)$ such that
\[f(x)=(h\cdot g)(x)\]
in $\Omega$. If this is the case, we also say that $h(x)$ \emph{divides} $f(x)$ \emph{on the left} and write $h(x)\,|\,f(x)$. If $h(x)$ does not divide $f(x)$ on the left, we write $h(x){\not|}\ f(x)$.
\end{definition}

We adopt analogous terminologies and notations for other algebras, such as the algebra of quaternionic slice functions $\mathcal{S}(\Omega\cap\hh)$ and the algebra of polynomials over dual quaternions  $\D\hh[t]$.

\begin{theorem}\label{thm:factor}
Let $f\in\mathcal{SR}(\Omega)$ with $\Omega=\Omega_D$ and let $y\in\Omega$.
\begin{itemize}
\item The zero set $V(f)$ includes $y$ if, and only if, $x-y\,|\,f(x)$. If this is the case, then we have $x^2-xt(y)+n(y)\,|\,N(f)(x)$.
\end{itemize}
If $y\in\Omega\setminus\rr$ then 
\begin{itemize}
\item $V(f)\supseteq T_{y_1}$ if, and only if, $x-y\,|\,f(x)$ and $x_1^2-x_1t(y)+n(y)\,|\,{}^{\pi}\!f(x_1)$\,;
\item $V(f)\supseteq\s_y$ if, and only if, $x^2-xt(y)+n(y)\,|\,f(x)$.
\end{itemize}
\end{theorem}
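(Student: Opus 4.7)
The plan is to reduce everything to a single-point factor theorem (the forward direction of part~1) and then derive the remaining assertions via Theorem~\ref{thm:zero}, Proposition~\ref{prop:zeroprimal} and iteration. The easy (reverse) directions of all three equivalences are immediate from Corollary~\ref{cor:pointwiseprod}: if $f = h\cdot g$ with $h\in\{x-y,\,x^2-xt(y)+n(y)\}$ and $h$ vanishes at a point $w$, then $(h\cdot g)(w) = h(w)\,g(\con(h(w),w)) = 0$. Since $x^2-xt(y)+n(y)$ is slice preserving and every $w\in\s_y$ satisfies $w^2-wt(y)+n(y) = 0$, this yields $\Leftarrow$ of part~3; applied at $w=y$ it gives $\Leftarrow$ of part~1. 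Once $f=(x-y)\cdot g$ is known, centrality of $N(g)\in S_{\D\rr}(\Omega)$ gives $N(f) = f\cdot f^c = (x-y)\cdot N(g)\cdot (x-y^c) = N(g)\cdot (x-y)\cdot (x-y^c) = (x^2-xt(y)+n(y))\cdot N(g)$, proving the $N(f)$-divisibility.

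The heart of the argument is the forward direction of part~1: $f(y)=0\Rightarrow (x-y)\,|\,f(x)$. My plan is to work at the stem-function level. Write $f=\mathcal{I}(F)$ with $F=F_1+\iota F_2\colon D\to \D\hh_\cc$ holomorphic, and $y=\alpha+\beta J_y\in\Omega$ (allowing $\beta=0$). Set $z_0=\alpha+\iota\beta\in D$ and $N_y(z) := z^2-zt(y)+n(y) = (z-z_0)(z-\bar z_0)\in\rr[z]$; the key features are that $N_y(z)$ is \emph{central} in $\D\hh_\cc$ and vanishes simply at $z_0,\bar z_0$ only. I would define
\[\tilde G(z) := \frac{(z-y^c)\,F(z)}{N_y(z)}\]
on $D\setminus\{z_0,\bar z_0\}$ and show its singularities are removable. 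The crucial computation is $(z_0-y^c)F(z_0)=0$: since $z_0-y^c=\beta(\iota+J_y)$ and the hypothesis $f(y)=0$ reads $F_1(z_0)=-J_y F_2(z_0)$, one obtains
\[(z_0-y^c)F(z_0) = \beta(J_y+\iota)\bigl(F_1(z_0)+\iota F_2(z_0)\bigr) = \beta\bigl[\bigl(J_y F_1(z_0)-F_2(z_0)\bigr) + \iota\bigl(J_y F_2(z_0)+F_1(z_0)\bigr)\bigr] = 0,\]
and the analogous vanishing at $\bar z_0$ follows from $F(\bar z_0)=\overline{F(z_0)}$. Hence $\tilde G$ extends holomorphically to $D$; since $N_y\in\rr[z]$ and $\overline{y^c}=y^c$, one verifies routinely that $\tilde G(\bar z)=\overline{\tilde G(z)}$, so $\tilde G$ is a stem function. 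Centrality of $N_y(z)$ then yields $(z-y)\tilde G(z) = N_y(z)F(z)/N_y(z) = F(z)$, so $g:=\mathcal{I}(\tilde G)\in\mathcal{SR}(\Omega)$ satisfies $(x-y)\cdot g = f$.

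For part~2 ($\Rightarrow$), part~1 supplies $(x-y)\,|\,f$ from $y\in V(f)$, and Proposition~\ref{prop:zeroprimal} (cases~2--3) gives $\s_{y_1}\subseteq V({}^\pi \! f)$; the classical quaternionic factor theorem (e.g.\ \cite[Theorem~3.12]{librospringer}) then yields $x_1^2-x_1t(y_1)+n(y_1)\,|\,{}^\pi \! f(x_1)$, and the equalities $t(y)=t(y_1)$, $n(y)=n(y_1)$ identify this with the claimed divisor. For ($\Leftarrow$), divisibility of ${}^\pi \! f$ forces both $({}^\pi \! f)^\circ_s(y_1)$ and $({}^\pi \! f)'_s(y_1)$ to vanish; Remark~\ref{rmk:primal} then puts $f^\circ_s(y)$ and $f'_s(y)$ in $\epsilon\hh$, so Theorem~\ref{thm:zero} forces $V(f)\cap\s_y$ into case~3 or case~4, both of which contain $T_{y_1}$. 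For part~3 ($\Rightarrow$), iterate part~1: from $\s_y\subseteq V(f)$, part~1 gives $f=(x-y)\cdot g_1$; evaluating at $y^c\in\s_y$ via Corollary~\ref{cor:pointwiseprod}, together with the short identity $\con(y^c-y,y^c)=y^c$ (immediate from $J_y y^c J_y = -y^c$), yields $g_1(y^c)=0$, so part~1 applies again to give $g_1=(x-y^c)\cdot g_2$; the computation $(x-y)\cdot(x-y^c) = x^2-xt(y)+n(y)$ finishes the factorization. The main obstacle is this very factor theorem: one must perform the factorization inside the noncommutative, zero-divisor-bearing algebra $\D\hh_\cc$ without losing analyticity at $z_0,\bar z_0$. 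The trick is that the right-companion $(z-y^c)$ combines with the central polynomial $N_y(z)$ and with the algebraic identity relating $J_y$ to $z_0-y^c=\beta(\iota+J_y)$ to supply exactly the vanishing needed to cancel the simple poles.
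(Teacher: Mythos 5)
Your argument is correct in substance, and for the crucial implication ($f(y)=0\Rightarrow x-y\,|\,f(x)$) it takes a genuinely different route from the paper. The paper picks $J\in\s_{\D\hh}$ with $y\in\Omega_J$, splits $f_{|\Omega_J}$ into four $\cc_J$-valued holomorphic components via Lemma~\ref{lemma:splitting}, factors $z-y$ out of each component by the classical complex factor theorem, and then reassembles a slice regular quotient $g$ (building its stem function by symmetrization and checking $f=(x-y)\cdot g$ on the slice). You instead stay entirely at the stem level, dividing $(z-y^c)F(z)$ by the central polynomial $N_y(z)=(z-z_0)(z-\bar z_0)$ and checking removability; this avoids choosing a splitting basis and the reassembly step, at the price of having to control the singularities of the quotient. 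Your handling of the quadratic and tangent-plane statements (evaluation at $y^c$ via Corollary~\ref{cor:pointwiseprod} together with $\con(y^c-y,y^c)=y^c$, and Proposition~\ref{prop:zeroprimal} plus the quaternionic factor theorem for ${}^{\pi}\!f$) is a sound variant of the paper's computation $f'_s(y)=g(y^c)$, and your derivation of the $N(f)$-divisibility is the same as the paper's.

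Two points need patching. First, the removability argument does not cover $y\in\Omega\cap\rr$ as written: there $z_0=\bar z_0$, so $N_y(z)=(z-z_0)^2$ has a double zero, while your displayed computation degenerates to $0\cdot F(z_0)=0$ and only yields first-order vanishing of the numerator. The fix is one line: for real $y$ the stem condition forces $F_2(z_0)=0$, so $f(y)=0$ means $F(z_0)=0$ and the numerator $(z-y)F(z)$ vanishes to order two (alternatively, for real $y$ divide $F$ directly by $z-y$). Second, in the reverse direction of the $T_{y_1}$ statement, the inference ``$f^\circ_s(y),f'_s(y)\in\epsilon\hh$, hence Theorem~\ref{thm:zero} forces case 3 or 4'' is not valid by itself: the constant function $f\equiv\epsilon i$ has $f^\circ_s,f'_s\in\epsilon\hh$ and empty zero set, and case 1 of Theorem~\ref{thm:zero} carries no restriction on the spherical data. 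You must also invoke the other hypothesis $x-y\,|\,f(x)$, which gives $f(y)=0$; then for every $w\in T_{y_1}$ one has $f(w)=f(y)+(\im(w)-\im(y))f'_s(y)=0$, since $\im(w)-\im(y)$ and $f'_s(y)$ both lie in $\epsilon\hh$ and the product of two such elements vanishes (Remark~\ref{rmk:zerodivisors}). With these two repairs the proof is complete.
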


\begin{proof}
We begin with the first statement.
\begin{itemize}
\item If $x-y\,|\,f(x)$ then $f(y)=0$ by Corollary~\ref{cor:pointwiseprod}. Now let us prove the converse implication. Let $J\in\s_{\D\hh}$ be an imaginary unit such that $y\in\Omega_J$ and let $\{1,J,e^J_1, Je^J_1,e^J_{2},Je^J_{2},e^J_{3},Je^J_{3}\}$ be an associated splitting basis of $\D\hh$.
By Lemma~\ref{lemma:splitting}, there exist holomorphic functions $f_0,f_1,f_2,f_3:\Omega_J \longrightarrow \cc_J$ such that 
\[f_{|\Omega_J}=\sum_{l=0}^{4} f_le^J_l\,,\]
where $e^J_0\vcentcolon =1$. If $f(y)=0$ then, for each $l\in\{0,1,2,3\}$, it holds $f_l(y)=0$ and there exists a holomorphic function $g_l:\Omega_J \longrightarrow \cc_J$ such that
\[f_l(z)=(z-y)g_l(y)\]
for all $z\in\Omega_J$. 
\\{\bf Claim 1.} {\it There exists $g\in\mathcal{SR}(\Omega)$ with $g_{|\Omega_J}=\sum_{l=0}^{4} g_le^J_l$.}
\\{\bf Claim 2.} {\it $f(x)=(x-y)\cdot g(x)$ in $\Omega$.}\\
We postpone the proofs of our claims until later and we complete the proof of the first statement by computing
\[N(f)(x)=(x-y)\cdot N(g)(x)\cdot(x-y^c)=(x-y)\cdot(x-y^c)\cdot N(g)(x)=(x^2-xt(y)+n(y))\cdot N(g)(x)\,.\]
\end{itemize}
We now suppose $y\in\Omega\setminus\rr$ and prove the third and second statement, reversing the order of presentation for the sake of clarity.
\begin{itemize}
\item $V(f)\supseteq \s_y$ is equivalent to $f(y)=0=f'_s(y)$ by Theorem~\ref{thm:zero}. By what we have already proven, $f(y)=0$ is equivalent to the existence of $g\in\mathcal{SR}(\Omega)$ with $f(x)=(x-y)\cdot g(x)$. By formula~\eqref{eqn:prod}, the last equality implies
\[f'_s(y)=1\,g^\circ_s(y)+\im(y^c)\,g'_s(y)=g(y^c)\,.\]
Thus, $f(y)=0=f'_s(y)$ is equivalent to the existence of $g,h\in\mathcal{SR}(\Omega)$ with $f(x)=(x-y)\cdot g(x)$ and with $g(x)=(x-y^c)\cdot h(x)$. This is, in turn, equivalent to asking for
\[(x-y)\cdot(x-y^c)=x^2-xt(y)+n(y)\]
to divide $f(x)$.
\item $V(f)\supseteq T_{y_1}$ is equivalent to $f(y)=0,f'_s(y)\in\epsilon\hh^*$ by Theorem~\ref{thm:zero}. This is equivalent to the existence of $g\in\mathcal{SR}(\Omega)$ with $f(x)=(x-y)\cdot g(x)$ and with ${}^{\pi}\!g(y_1^c)=0$ (by Remark~\ref{rmk:primal}). This is, in turn, equivalent to asking for $x-y$ to divide $f(x)$ and for
\[(x_1-y_1)\cdot(x_1-y_1^c)=x_1^2-x_1t(y_1)+n(y_1)=x_1^2-x_1t(y)+n(y)\]
to divide ${}^{\pi}\!f(x_1)$.
\end{itemize}
{\bf Proof of claim 1.} Setting 
\begin{align*}
G_1(\alpha+i\beta)&:=\frac12\sum_{l=0}^{4} (g_l(\alpha+J\beta)+g_l(\alpha-J\beta))e^J_l\\
G_2(\alpha+i\beta)&:=\frac{J}2\sum_{l=0}^{4} (g_l(\alpha-J\beta)-g_l(\alpha+J\beta))e^J_l
\end{align*}
for all $\alpha+i\beta\in D$ defines a stem function $G=G_1+\iota G_2:D\to\D\hh$. $G$ is holomorphic by direct inspection, whence it induces a slice regular function $g=\I(G)\in\mathcal{SR}(\Omega)$. Now, for all $z=\alpha+J\beta\in\Omega_J$,
\[g(z)=G_1(\alpha+i\beta)+JG_2(\alpha+i\beta)=\sum_{l=0}^{4} g_l(z)e^J_l\,,\]
as desired.
\\{\bf Proof of claim 2.} We wish to prove that $h(x):=(x-y)\cdot g(x)$ coincides with $f(x)$ in $\Omega$. As explained in Section~\ref{sec:functions}, it suffices to prove that they coincide in $\Omega_J$. Now, for all $z\in\Omega_J$, Theorem~\ref{thm:pointwiseprod} guarantees that $h(z)=(z-y)g(z)$. Thus,
\[h(z)=(z-y)g(z)=(z-y)\sum_{l=0}^{4} g_l(z)e^J_l=\sum_{l=0}^{4} f_l(z)e^J_l=f(z)\,,\]
as desired.
\end{proof}

\begin{corollary}\label{cor:uniquefactor}
Let $f\in\mathcal{SR}(\Omega)$ with $\Omega=\Omega_D$ and let $y\in\Omega\setminus\rr$. If
\[x^2-xt(y)+n(y)\,|\,N(f)(x),\mathnormal{\ but\ } x_1^2-x_1t(y)+n(y){\not|}\ {}^{\pi}\!f(x_1)\,,\]
then $f$ has a unique zero $w\in\s_y$ and $x-w\,|\,f(x)$. If
\[x_1^2-x_1t(y)+n(y)\,|\,{}^{\pi}\!f(x_1),\mathnormal{\ but\ } x^2-xt(y)+n(y){\not|}\ f(x)\,,\]
then either $x-w{\not|}\ f(x)$ for all $w\in\s_y$ or the subset of those $w\in\s_y$ such that $x-w\,|\,f(x)$ is a tangent plane $T_{h_1}\subseteq\s_y$. Finally, if 
\[x^2-xt(y)+n(y)\,|\,f(x)\,,\]
then $x-w\,|\,f(x)$ for all $w\in\s_y$.
\end{corollary}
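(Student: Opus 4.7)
The plan is to reduce each of the three implications to a direct application of the preceding structural results. I would first record a small bookkeeping observation: for $y\in\Omega\setminus\rr\subseteq\Q_{\D\hh}\setminus\rr$ one has $y_2\in\im(\hh)$ and $y_1\perp y_2$, whence $t(y)=t(y_1)\in\rr$ and $n(y)=n(y_1)\in\rr$. Combining this with the quaternionic instance of the last bullet of Theorem~\ref{thm:factor} applied to ${}^{\pi}\!f\in\mathcal{SR}(\Omega\cap\hh)$, and using $\s_y\cap\hh=\s_{y_1}$, the hypothesis $x_1^2-x_1t(y)+n(y)\,|\,{}^{\pi}\!f(x_1)$ becomes equivalent to $V({}^{\pi}\!f)\supseteq\s_y\cap\hh$; its negation is then equivalent to ``${}^{\pi}\!f$ does not vanish identically on $\s_{y_1}$''.

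For the first implication, the hypothesis $x^2-xt(y)+n(y)\,|\,N(f)(x)$ is equivalent to $\s_y\subseteq V(N(f))$ by Theorem~\ref{thm:factor} applied to the slice preserving function $N(f)$. Theorem~\ref{thm:zeronorm} then yields the dichotomy ``either $V(f)\cap\s_y\neq\emptyset$, or $\s_y\cap\hh\subseteq V({}^{\pi}\!f)$''. The second alternative is excluded by assumption, so $V(f)\cap\s_y\neq\emptyset$. Theorem~\ref{thm:zero} leaves three possibilities for this intersection: a single point, a tangent plane $T_{w_1}$, or all of $\s_y$. The last two would force $V({}^{\pi}\!f)\supseteq\s_{y_1}$ by Proposition~\ref{prop:zeroprimal}, again contradicting the hypothesis; hence $V(f)\cap\s_y=\{w\}$ is a singleton, and Theorem~\ref{thm:factor} yields $x-w\,|\,f(x)$.

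For the second implication, Theorem~\ref{thm:factor} translates ``$x-w\,|\,f(x)$'' into ``$w\in V(f)\cap\s_y$'', so it suffices to classify this intersection. The hypothesis $x^2-xt(y)+n(y)\,{\not|}\ f(x)$ rules out $V(f)\cap\s_y=\s_y$; and a singleton $V(f)\cap\s_y=\{w\}$ would, by Proposition~\ref{prop:zeroprimal}, give $V({}^{\pi}\!f)\cap\s_{y_1}=\{w_1\}$, contradicting $V({}^{\pi}\!f)\supseteq\s_{y_1}$. By Theorem~\ref{thm:zero} the only remaining options are $V(f)\cap\s_y=\emptyset$ (so no $w\in\s_y$ works) or $V(f)\cap\s_y=T_{h_1}\subseteq\s_y$ for some $h_1\in\s_y\cap\hh$, which is precisely the claim. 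The third implication is immediate: $x^2-xt(y)+n(y)\,|\,f(x)$ yields $V(f)\supseteq\s_y$ by Theorem~\ref{thm:factor}, and so $x-w\,|\,f(x)$ for every $w\in\s_y$ by the same theorem.

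I expect the only point needing care to be the dual-versus-primal bookkeeping outlined in the first paragraph: concretely, noticing that $x^2-xt(y)+n(y)$ actually lies in $\rr[t]$ thanks to $y\in\Q_{\D\hh}$, and that it divides $N(f)(x)$ (respectively, $x_1^2-x_1t(y)+n(y)$ divides ${}^{\pi}\!f(x_1)$) exactly when $\s_y$ (respectively, $\s_{y_1}$) is contained in the zero set of the corresponding function. Once that identification is in place, each implication is a short sequence of invocations of Theorems~\ref{thm:zero}, \ref{thm:zeronorm}, \ref{thm:factor} and Proposition~\ref{prop:zeroprimal}.
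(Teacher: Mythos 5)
Your proof is correct and follows essentially the same route as the paper: translate the divisibility hypotheses into zero-set statements via Theorem~\ref{thm:factor} (and its standard quaternionic analogue for ${}^{\pi}\!f$), invoke Theorem~\ref{thm:zeronorm} to get a unique zero in the first case, classify $V(f)\cap\s_y$ via Theorem~\ref{thm:zero} and Proposition~\ref{prop:zeroprimal} in the second, and convert back with Theorem~\ref{thm:factor}. The paper's argument is just terser (it uses the ``moreover'' clause of Theorem~\ref{thm:zeronorm} directly and leans on the second bullet of Theorem~\ref{thm:factor} where you use Proposition~\ref{prop:zeroprimal}), so the differences are cosmetic.
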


\begin{proof}
Let us prove our first statement. According to Theorem~\ref{thm:factor}, $V(N(f))$ includes $\s_y$ but $V({}^{\pi}\!f)$ does not include $\s_y\cap\hh$. By Theorem~\ref{thm:zeronorm}, $f$ has a unique zero $w\in\s_y$. We easily conclude that $x-w\,|\,f(x)$ by a further application of Theorem~\ref{thm:factor}.

The second statement follows directly from Theorem~\ref{thm:factor}.

As for the third statement, it follows from the fact that $x-w\,|\,x^2-xt(y)+n(y)$ for all $w\in\s_y$.
\end{proof}

The previous corollary is false when $y\in\Omega\cap\rr$, as proven by the next examples.

\begin{example}
Let us fix $y=y_1\in\rr$ and $v\in\im(\hh)$. Define $f\in\mathcal{SR}(\Q_{\D\hh})$ by the formula
\[f(x)=x-y_1-\epsilon v\,.\]
Then ${}^{\pi}\!f(x_1)=x_1-y_1$ and $N(f)(x)=x^2-2xy_1+y_1^2=x^2-xt(y)+n(y)$. Nevertheless, in $\s_y=\{y\}$ it holds $f(y)\neq0$ and $x-y{\not|}\ f(x)$.
\end{example}

\begin{example}
Let us fix $y=y_1\in\rr$ and define $f\in\mathcal{SR}(\Q_{\D\hh})$ by the formula
\[f(x)=x^2+x(\epsilon-t(y))+n(y)-y\epsilon\,.\]
Then ${}^{\pi}\!f(x_1)=x_1^2-x_1t(y)+n(y)$ and $x^2-xt(y)+n(y){\not|}\ f(x)$. Nevertheless, in $\s_y=\{y\}$ it holds $f(y)=0$ and $x-y\,|\,f(x)$.
\end{example}

The difference between the non real case and the real case is explained by the next remark, which involves the concept \emph{characteristic polynomial} of a dual quaternion $h\in\D\hh$: a polynomial having minimal degree among all monic real polynomial vanishing at $h$.

\begin{remark}\label{rkm:delta}
Let us fix $h\in\D\hh$. The norm of $t-h$, namely
\[\Delta_h(t):=(t-h)\cdot(t-h^c)=t^2-t\,t(h)+n(h)\]
is a real polynomial if, and only if, $h\in(\rr+\epsilon\im(\hh))\cup\Q_{\D\hh}$. Moreover:
\begin{itemize}
\item If $h\in\Q_{\D\hh}\setminus\rr$, then $\Delta_h(t)$ is the characteristic polynomial of each element of its zero set $\s_h$.
\item If $h\in\rr+\epsilon\im(\hh)$, then $\Delta_h(t)=(t-h_1)^2$ is the characteristic polynomial of each element of its zero set $h_1+\epsilon\hh$ except $h_1$, whose characteristic polynomial is $t-h_1$.
\end{itemize}
Finally, for each monic quadratic real polynomial $M(t)$: if $M(t)$ is irreducible in $\rr[t]$, then it equals $\Delta_h(t)$ for some $h\in\Q_{\D\hh}\setminus\rr$; if $M(t)=(t-h_1)^2$ with $h_1\in\rr$, then $M(t)=\Delta_{h_1}(t)$; if $M(t)$ has two real roots, then it has no other roots in $\D\hh$ and it does not coincide with $\Delta_h(t)$ for any $h\in\D\hh$.
\end{remark}

%%%%%%%%%%%%%%%%%%%%%%%%%%%%%

\section{Applications to the study of motion polynomials}\label{sec:applications}

This section explains the meaning of possible factorizations of the motion polynomial $P(t)$ in formula~\eqref{eq:trajectory} and it studies existence and uniqueness of such factorizations. This is done combining material from~\cite{hegedus} with applications of our new results about the zeros of slice functions over dual quaternions.

Throughout this section, we only consider monic polynomials. There is no loss of generality in doing so, because multiplying the motion polynomial $P(t)$ in formula~\eqref{eq:trajectory} by the inverse of its leading coefficient will only result in a change in the coordinate frame.

\subsection{Meaning of factorization of motion polynomials}

Let us consider a monic linear polynomial
\[P(t)=t-h\]
with $h\in\D\hh$. $P(t)$ is a motion polynomial if, and only if, $N(P)(t)=\Delta_h(t)$ belongs to $\rr[t]$. This is, in turn, equivalent to $h\in(\rr+\epsilon\im(\hh))\cup\Q_{\D\hh}$. We distinguish two cases for the transformations
\[1+\epsilon x \longmapsto \frac{(t-h)\,(1+\epsilon x)\,(t-\tilde h)}{\Delta_h(t)}=1+\epsilon\left((t-h_1)x(t-h_1)^{-1}+2h_2(t-h_1)^{-1}\right)\,.\]
\begin{description}
\item[Translations.] If $h\in \rr+\epsilon\im(\hh)$, then the previous transformation is a translation with translation vector $2h_2(t-h_1)^{-1}$. The direction of the vector does not depend on $t$, although its length does: the trajectories are straight lines parallel to $h_2$.
\item[Rotations.] If $h\in \Q_{\D\hh}\setminus\rr$, then the previous transformation is a rotation. The rotation axis, which has Pl\"ucker coordinates $\left(\frac{\im(h_1)}{|\im(h_1)|},\frac{h_2}{|\im(h_1)|}\right)$, does not depend on $t$. The rotation angle $\theta$ is determined by the equality $\cos(\frac\theta2)=\frac{t-\re(h_1)}{|t-h_1|}$. Thus, the trajectories are circles whose centers lie on the fixed axis.
\end{description}

Motions of former type have been identified in~\cite{hegedus} as those of linkages consisting of a single prismatic joint. Similarly, motions of the latter type are associated to linkages consisting of a single revolute joint. Let us give an explicit example of the latter type.

\begin{example}
For all $t \in \rr,$ consider the monic linear polynomial
\[P(t)=t-i+\epsilon j\,.\]
It is a motion polynomial because $N(P)(t)=t^2+1\in \rr[t]$. For each $t_0 \in \rr$, we consider the proper rigid transformation 
\[1+\epsilon x \longmapsto \dfrac{(t_0-i+\epsilon j)(1+\epsilon x)(t_0+i+\epsilon j)}{t_0^2+1}\,,\]
which is a rotation around the axis with Pl\"ucker coordinates $(i,-j)$ (i.e., the axis with direction $i$ through the point $-i\wedge j=-k$). For instance, the trajectory of $1+\epsilon k$, corresponding to the point $(0,0,1)$ in $\rr^3$, is the rational curve
\[t\longmapsto 1+\epsilon\dfrac{4tj+(t^2-3)k}{t^2+1}\,.\]
It is a parametrization of the circle of radius $2$ centered at $(0,0,-1)$ in the plane of points $(x_1,x_2,x_3)$ with $x_1=0$, except for the point $(0,0,1)$, which is the limit of the curve as $t\to\pm\infty$.
\end{example}

Now let us consider a motion polynomial $P(t) \in \D\hh[t]$ of degree $n>0$. Suppose $P(t)$ admits a factorization
\begin{equation}\label{eq:fact}
P(t)=(t-h^{(1)})\cdot(t-h^{(2)})\cdot \ \dots \ \cdot (t-h^{(n)})\quad \mathrm{with\ }h^{(1)},\ldots,h^{(n)}\in(\rr+\epsilon\im(\hh))\cup\Q_{\D\hh}\,.
\end{equation} 
Then the transformation in formula~\eqref{eq:actionpoly} is the composition of a number $s\in\{1,\ldots,n\}$ of pure rotations and of $n-s$ pure translations, completely determined by the dual quaternions $h^{(1)},\dots,h^{(n)}$ and by the parameter $t_0$. The resulting trajectories have been identified in~\cite{hegedus} as the motions of linkages consisting of $s$ revolute joints and $n-s$ prismatic joints. This motivated the search for sufficient conditions on a motion polynomial $P(t) \in \D\hh[t]$ that guarantee the existence of a factorization of the form~\eqref{eq:fact}. 

\subsection{Sufficient conditions for the existence of factorizations}

The work~\cite{hegedus} proved that the following property is a sufficient condition for the existence of a factorization of the form~\eqref{eq:fact}, with $h^{(1)},\ldots,h^{(n)}\in\Q_{\D\hh}\setminus\rr$ (corresponding to a linkage with $n$ revolute joints). 

\begin{definition}
A motion polynomial $P(t)\in \D\hh[t]$ is called \emph{generic} if every real polynomial that divides $\mathrm{primal}(P)(t)$ is constant.
\end{definition}

The key ingredient in the proof of this sufficient condition was~\cite[Lemma 3]{hegedus}, which we can restate and prove as follows (again, with a different convention about the side of the coefficients of polynomials in $\D\hh[t]$).

\begin{lemma}\label{lemma:commonsol}
Let $P(t)\in\D\hh[t]$ and let $y\in(\rr+\epsilon\im(\hh))\cup\Q_{\D\hh}$. If
\[\Delta_y(t)\,|\,N(P)(t),\quad \Delta_y(t){\not|}\ \mathrm{primal}(P)(t)\]
then $P(t)$ and $\Delta_y(t)$ have a unique common root $h$ and $t-h\,|\,P(t)$. Moreover, $h\in(\rr+\epsilon\im(\hh))\cup\Q_{\D\hh}$.
\end{lemma}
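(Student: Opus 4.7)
The plan is to split according to the nature of $\Delta_y(t)$. If $y\in\Q_{\D\hh}\setminus\rr$, then $\Delta_y(t)=t^2-t\,t(y)+n(y)$ is irreducible over $\rr$, and a direct calculation in $\D\hh=\hh+\epsilon\hh$ (solving $h_1^2-t(y)h_1+n(y)=0$ and $h_1h_2+h_2h_1=t(y)h_2$) shows its zero set in $\D\hh$ coincides with $\s_y\subset\Q_{\D\hh}$. Via the injective *-algebra homomorphism $\D\hh[t]\to\mathcal{SR}(\Q_{\D\hh})$ of Proposition~\ref{prop:algpol}, the hypotheses translate to $x^2-x\,t(y)+n(y)\,|\,N(f)(x)$ and $x_1^2-x_1\,t(y)+n(y)\nmid{}^{\pi}\!f(x_1)$ for $f:=P|_{\Q_{\D\hh}}$. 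Corollary~\ref{cor:uniquefactor} then supplies a unique $h\in\s_y$ with $f(h)=0$. Since $P(h)=f(h)=0$ and $t-h$ is monic, polynomial long division in $\D\hh[t]$ yields $P(t)=(t-h)\cdot Q(t)+r$ with remainder $r=P(h)=0$, so $t-h\,|\,P(t)$.

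If instead $y_1\in\rr$, Remark~\ref{rkm:delta} gives $\Delta_y(t)=(t-y_1)^2$, whose zero set in $\D\hh$ is the nilpotent set $y_1+\epsilon\hh$---which lies outside $\Q_{\D\hh}$ except at $y_1$, so the slice function machinery cannot be invoked directly. I reason at the polynomial level. Writing $P(t)=\sum_n t^n a_n$ with $a_n=(a_n)_1+\epsilon(a_n)_2$, and introducing $\mathrm{dual}(P)(t):=\sum_n t^n(a_n)_2$, the relations $\epsilon^2=0$ and $y_1\in\rr$ give $h^n=y_1^n+ny_1^{n-1}\epsilon v$ for $h=y_1+\epsilon v$, whence
\[P(h)=\mathrm{primal}(P)(y_1)+\epsilon\bigl[\mathrm{dual}(P)(y_1)+v\cdot\mathrm{primal}(P)'(y_1)\bigr].\]
Splitting $N(P)(t)$ into its primal part $N(\mathrm{primal}(P))(t)$ and its dual part $\mathrm{primal}(P)(t)\,\mathrm{dual}(P)^c(t)+\mathrm{dual}(P)(t)\,\mathrm{primal}(P)^c(t)$, the hypothesis $(t-y_1)^2\,|\,N(P)(t)$ demands that $(t-y_1)^2$ divide both parts. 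The primal half forces $\mathrm{primal}(P)(y_1)=0$; the hypothesis $(t-y_1)^2\nmid\mathrm{primal}(P)(t)$ then yields $\mathrm{primal}(P)'(y_1)\neq 0$; and the dual half, once $\mathrm{primal}(P)(y_1)=0$ is imposed, collapses to $\re\bigl(\mathrm{dual}(P)(y_1)\,\mathrm{primal}(P)'(y_1)^c\bigr)=0$.

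Imposing $P(h)=0$ therefore determines $v$ uniquely as $-\mathrm{dual}(P)(y_1)\,\mathrm{primal}(P)'(y_1)^{-1}$, and the orthogonality just extracted forces $\re(v)=0$, so $v\in\im(\hh)$ and $h\in\rr+\epsilon\im(\hh)$. Polynomial long division in $\D\hh[t]$ then gives $t-h\,|\,P(t)$ exactly as in the first case. The main obstacle lies in this second case: one must verify independently that the unique algebraic solution $v$ is purely imaginary, and this turns out to be precisely the content of the ``dual'' half of the hypothesis $\Delta_y(t)\,|\,N(P)(t)$.
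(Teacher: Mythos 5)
Your argument is correct, and it follows the paper's own skeleton: the same case split on $y$, with the case $y\in\Q_{\D\hh}\setminus\rr$ handled exactly as in the paper by reducing to Corollary~\ref{cor:uniquefactor} (your extra verification that the zero set of $\Delta_y(t)$ in all of $\D\hh$ is precisely $\s_y$ is a point the paper leaves implicit, and it is worth having). In the case $y_1\in\rr$ your computation is equivalent to the paper's but packaged differently: the paper divides $P(t)$ by $t-y_1$, writes $P(t)=(t-y_1)\cdot Q(t)+\epsilon r_2$ and works with $q:=\mathrm{primal}(Q)(y_1)$, whereas you expand to first order in $\epsilon$; the dictionary is $\mathrm{dual}(P)(y_1)=r_2$ and $\mathrm{primal}(P)'(y_1)=q$, and your orthogonality relation $\re\bigl(\mathrm{dual}(P)(y_1)\,\mathrm{primal}(P)'(y_1)^c\bigr)=0$ is the paper's condition $t(r_2q^c)=0$, extracted from the second-order vanishing of the dual part of $N(P)$ at $y_1$ via the derivative instead of via a quotient. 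Your derivation of $\mathrm{primal}(P)(y_1)=0$ from $N(\mathrm{primal}(P))(y_1)=|\mathrm{primal}(P)(y_1)|^2$ is in fact more elementary than the paper's appeal to Theorem~\ref{thm:zeronorm}. Finally, for the conclusion $t-h\,|\,P(t)$ you invoke the left-root/left-factor remainder argument ($P(t)=(t-h)\cdot Q(t)+r$ with $r=P(h)$), while the paper constructs the cofactor explicitly; with the paper's convention (coefficients on the right, factor $t-h$ on the left) the identity $\bigl((t-h)\cdot Q\bigr)(h)=\sum_n(h^{n+1}-h^nh)b_n=0$ does hold even over the ring $\D\hh$ with zero divisors, so this shortcut is legitimate, though it deserves the one-line verification since evaluation is not a homomorphism on $\D\hh[t]$. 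The only other point you compress is the translation, in the non-real case, of the polynomial hypothesis $\Delta_y(t)$ not dividing $\mathrm{primal}(P)(t)$ into the slice-function non-divisibility required by Corollary~\ref{cor:uniquefactor}; this needs the (routine) remark that slice-function divisibility by the central monic $\Delta_y$ forces polynomial divisibility, a step the paper glosses over in exactly the same way.
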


\begin{proof}
If $y\in\Q_{\D\hh}\setminus\rr$, then the thesis immediately follows from Corollary~\ref{cor:uniquefactor}. If, instead, $y\in\rr+\epsilon\im(\hh)$, then the results of Section~\ref{sec:factorization} do not generally apply. Therefore, we provide a direct proof of our thesis.

Our first step is proving that $P(t)$ and $\Delta_y(t)$ have a unique common zero $h$. Since $\Delta_y(t)=\Delta_{y_1}(t)=(t-y_1)^2$ divides $N(P)(t)$, Corollary~\ref{cor:pointwiseprod} tells us that $N(P)(t)$ vanishes at the real point $y_1$. By Theorem~\ref{thm:zeronorm}, so does $\mathrm{primal}(P)(t)$. Thus,
\begin{equation}\label{eq:reallinearfactor}
t-y_1\,|\,\mathrm{primal}(P)(t),\quad (t-y_1)^2{\not|}\ \mathrm{primal}(P)(t)\,.
\end{equation}
We have $P(t)=(t-y_1)\cdot Q(t)+r$ for some other polynomial $Q(t)\in\D\hh[t]$ and some $r\in\D\hh$. In this situation, $\mathrm{primal}(P)(t)=(t-y_1)\cdot \mathrm{primal}(Q)(t)+r_1$ and conditions~\eqref{eq:reallinearfactor} imply that $r_1=0$, while $\mathrm{primal}(Q)(y_1)=q\in\hh^*$. As a consequence, $P(t)=(t-y_1)\cdot Q(t)+\epsilon r_2$ and, for all $v\in\hh$,
\begin{align*}
P(y_1+\epsilon v)&=(t-y_1)\cdot Q(t)_{|t=y_1+\epsilon v}+\epsilon r_2\\
&=\epsilon v\,Q(y_1+\epsilon v)+\epsilon r_2\\
&=\epsilon v\,\mathrm{primal}(Q)(y_1)+\epsilon r_2\\
&=\epsilon(vq+r_2)\,,
\end{align*}
where the second equality follows by applying Remark~\ref{rmk:evaluation} to $t-y_1\in\rr[t]\subset\D\rr[t]$ and the third equality follows from Remark~\ref{rmk:primalevaluation}. As a consequence, the unique zero of $P(t)$ in the zero set $y_1+\epsilon\hh$ of $(t-y_1)^2=\Delta_y(t)$ is the point
\[h:=y_1-\epsilon r_2q^{-1}\,.\]

As a second step, we prove that $h$ belongs to $y_1+\epsilon \im(\hh)$, i.e., that $t(r_2q^{-1})=0$. By construction,
\begin{align*}
&N(P)(t)=(t-y_1)^2\cdot N(Q)(t)+\epsilon\cdot(t-y_1)\cdot R(t),\\
&R(t):=r_2\cdot \mathrm{primal}(Q)^c(t)+\mathrm{primal}(Q)(t)\cdot r_2^c\,.
\end{align*}
The hypothesis $(t-y_1)^2\,|\,N(P)(t)$ implies that $t-y_1\,|\,R(t)$. We conclude that
\[0=R(y_1)=r_2\,\mathrm{primal}(Q)^c(y_1)+\mathrm{primal}(Q)(y_1)\,r_2^c=r_2q^c+qr_2=t(r_2q^c)\,,\]
where the second and third equalities follow by applying Remark~\ref{rmk:evaluation} at $y_1\in\rr\subset\D\rr$. As a consequence, $t(r_2q^{-1})=\frac{t(r_2q^c)}{n(q)}=0$, as desired.

Our third step is proving that $t-h\,|\,P(t)$. To do so, let us start again from the equality $P(t)=(t-y_1)\cdot Q(t)+\epsilon r_2$. By Remark~\ref{rmk:evaluation},
\[Q(y_1)=\mathrm{primal}(Q)(y_1)=q\,,\]
whence there exists $\widetilde Q(t)\in\D\hh[t]$ such that $Q(t)=(t-y_1)\cdot \widetilde Q(t)+q$. Taking into account that $(t-y_1)^2=\Delta_h(t)$, we conclude that
\[P(t)=\Delta_{h}(t)\cdot \widetilde Q(t)+(t-y_1)\cdot q+\epsilon r_2=\Delta_{h}(t)\cdot \widetilde Q(t)+(t-y_1+\epsilon r_2q^{-1})\cdot q=\Delta_{h}(t)\cdot \widetilde Q(t)+(t-h)\cdot q\,.\]
Since $t-h\,|\,\Delta_h(t)$ by definition, our thesis $t-h\,|\,P(t)$ immediately follows.
\end{proof}

The same work~\cite{hegedus} hinted it was possible to generalize the sufficient condition for the existence of a factorization of the form~\eqref{eq:fact} to allow both revolute and prismatic joints. This is done in the next definition and in the subsequent result, which uses both Lemma~\ref{lemma:commonsol} and the Quaternionic Fundamental Theorem of Algebra (see, e.g.,~\cite[Theorem 3.18]{librospringer}).

\begin{definition}
A motion polynomial $P(t)\in \D\hh[t]$ is called \textit{$\Delta$-free} if 
\[\Delta_{y}(t){\not|}\ \mathrm{primal}(P)(t)\]
for all $y\in (\rr+\epsilon\im(\hh))\cup\Q_{\D\hh}$; or, equivalently, for all $y=y_1 \in \hh$.
\end{definition}

\begin{theorem}\label{thm:factgen}
Let $P(t)\in \D\hh[t]$ be a $\Delta$-free monic motion polynomial of degree $n>0$. Then $P(t)$ admits a factorization $P(t)=(t-h^{(1)})\cdot \ \dots \ \cdot (t-h^{(n)})$ with $h^{(1)},\ldots,h^{(n)}\in (\rr+\epsilon\im(\hh))\cup\Q_{\D\hh}$.
\end{theorem}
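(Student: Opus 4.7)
I would prove the theorem by induction on the degree $n$. The base case $n=1$ is immediate: a monic linear motion polynomial has the form $t-h$, and the condition $N(t-h)=\Delta_h(t)\in\rr[t]$ forces $h\in(\rr+\epsilon\im(\hh))\cup\Q_{\D\hh}$ by Remark~\ref{rkm:delta}. For the inductive step with $n\geq 2$, the strategy is to peel off one linear factor on the left, invoking Lemma~\ref{lemma:commonsol}, and then check that the cofactor inherits all relevant hypotheses.

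To apply Lemma~\ref{lemma:commonsol}, I first need to locate $y\in(\rr+\epsilon\im(\hh))\cup\Q_{\D\hh}$ such that $\Delta_y(t)\mid N(P)(t)$. For this I would use a crucial positivity observation: for any real $t_0$, the value $h=P(t_0)$ lies in the Study quadric, so $h_1\perp h_2$ and $N(P)(t_0)=n(h)=|h_1|^2\geq 0$. Hence $N(P)(t)\in\rr[t]$ is non-negative on $\rr$, which forces every real root to have even multiplicity. Consequently, $N(P)(t)$ factors in $\rr[t]$ as a product of monic polynomials each of which is either an irreducible quadratic $(t-\alpha)^2+\beta^2$ or a square $(t-r)^2$ of a linear factor. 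By the last part of Remark~\ref{rkm:delta}, each such factor equals $\Delta_y(t)$ for some $y\in(\rr+\epsilon\im(\hh))\cup\Q_{\D\hh}$, so such a $y$ exists. Since $P(t)$ is $\Delta$-free, $\Delta_y(t)\nmid\mathrm{primal}(P)(t)$, and Lemma~\ref{lemma:commonsol} provides $h\in(\rr+\epsilon\im(\hh))\cup\Q_{\D\hh}$ with $t-h\mid P(t)$. Writing $P(t)=(t-h)\cdot Q(t)$ gives a monic $Q(t)\in\D\hh[t]$ of degree $n-1$.

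It remains to verify that $Q(t)$ is itself a $\Delta$-free monic motion polynomial, so that the induction closes. That $Q$ is monic of degree $n-1$ is automatic. For the motion-polynomial property, $N(P)(t)=\Delta_h(t)\cdot N(Q)(t)$ together with $N(P)(t),\Delta_h(t)\in\rr[t]$ (both monic) gives $N(Q)(t)\in\rr[t]$, and the leading coefficient of $Q$ is $1\in\D\hh^\times$. For $\Delta$-freeness, suppose toward contradiction that $\Delta_y(t)\mid\mathrm{primal}(Q)(t)$ for some $y$. Since $\mathrm{primal}:\D\hh[t]\to\hh[t]$ is a $*$-algebra homomorphism, $\mathrm{primal}(P)(t)=(t-h_1)\cdot\mathrm{primal}(Q)(t)$. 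Because $\Delta_y(t)\in\rr[t]$ is central in $\hh[t]$, the divisibility $\Delta_y(t)\mid\mathrm{primal}(Q)(t)$ propagates to $\Delta_y(t)\mid\mathrm{primal}(P)(t)$, contradicting the $\Delta$-freeness of $P$. The induction hypothesis then factors $Q(t)=(t-h^{(2)})\cdot\ldots\cdot(t-h^{(n)})$ with the required form of the $h^{(i)}$'s.

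The main conceptual step is recognizing that real positivity of $N(P)$ on $\rr$ guarantees a factor $\Delta_y$ with $y$ of the admissible form, so that Lemma~\ref{lemma:commonsol} can be triggered; the remaining hurdle is the stability of the $\Delta$-free condition under left-factorization, which is resolved by the centrality of real polynomials in $\hh[t]$. No fundamentally new machinery beyond Lemma~\ref{lemma:commonsol}, Remark~\ref{rkm:delta}, and the homomorphism property of $\mathrm{primal}$ is required.
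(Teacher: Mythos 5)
Your proof is correct, and it closes the induction exactly as the paper does (peel off a left factor via Lemma~\ref{lemma:commonsol}, then check that the cofactor $Q$ is again a monic $\Delta$-free motion polynomial through the identities $N(P)(t)=\Delta_h(t)\cdot N(Q)(t)$ and $\mathrm{primal}(P)(t)=(t-h_1)\cdot\mathrm{primal}(Q)(t)$). Where you genuinely diverge is in how you produce the quadratic $\Delta_y(t)$ dividing $N(P)(t)$ with $y\in(\rr+\epsilon\im(\hh))\cup\Q_{\D\hh}$: the paper applies the Quaternionic Fundamental Theorem of Algebra to $\mathrm{primal}(P)(t)$ to get a root $y_1\in\hh$, and then uses Remark~\ref{rkm:motionnorm} (the norm of $\mathrm{primal}(P)$ equals $N(P)$) to conclude $\Delta_{y_1}(t)\,|\,N(P)(t)$; you instead observe that $N(P)(t_0)=n(P(t_0))=|P(t_0)_1|^2\geq0$ for all real $t_0$ (since $P(t_0)$ lies on the Study quadric), so the monic real polynomial $N(P)$ splits over $\rr$ into irreducible quadratics and squares of linear factors, each of which is $\Delta_y(t)$ for an admissible $y$ by Remark~\ref{rkm:delta}. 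Both routes are valid. Yours is more elementary at this step (no hypercomplex root theorem, only real positivity plus Remark~\ref{rkm:delta}) and feeds the hypothesis of Lemma~\ref{lemma:commonsol} directly; the paper's route has the advantage of tracking the roots of $\mathrm{primal}(P)$ explicitly, which is what makes the subsequent remark on the number of factorizations (at most $n!$, one per factorization of $\mathrm{primal}(P)$) immediate, whereas your version would need the correspondence with $\mathrm{primal}(P)$ reinstated to recover that count. Your observation that $\Delta$-freeness of $Q$ follows from centrality of $\Delta_y(t)\in\rr[t]$ in $\hh[t]$ is a correct and slightly more explicit justification of what the paper calls a direct computation.
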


\begin{proof}
We proceed by induction on the degree $n$. For the case $n=1$, we already observed by direct computation that $P(t)=t-h^{(1)}$ with $h^{(1)}\in (\rr+\epsilon\im(\hh))\cup\Q_{\D\hh}$. Now let us suppose our thesis proven for all degrees $1,\ldots,n-1$ and let us establish it for degree $n$.

According to the Quaternionic Fundamental Theorem of Algebra, the quaternionic polynomial $\mathrm{primal}(P)$ admits a zero $y_1\in\hh$. Thus,
\[t-y_1\,|\,\mathrm{primal}(P)(t)\,.\]
As a consequence, $\Delta_{y_1}(t)$ divides the norm of $\mathrm{primal}(P)(t)$, which coincides with $N(P)(t)$ according to Remark~\ref{rkm:motionnorm}. By the definition of $\Delta$-free motion polynomial, $\Delta_{y_1}(t){\not|}\ \mathrm{primal}(P)(t)$. According to Lemma~\ref{lemma:commonsol}, 
\[P(t)=(t-h^{(1)})\cdot Q(t)\]
for some polynomial $Q(t)\in\D\hh[t]$ of degree $n-1$ and some point $h^{(1)} \in (\rr+\epsilon\im(\hh))\cup\Q_{\D\hh}$ that is a zero of $\Delta_{y_1}(t)$. By direct computation,
\begin{align*}
&N(P)(t)=\Delta_{h^{(1)}}(t)\cdot N(Q)(t)\,,\\
&\mathrm{primal}(P)(t)=(t-h^{(1)}_1)\cdot\mathrm{primal}(Q)(t)\,,
\end{align*}
whence $Q(t)$ is still a $\Delta$-free motion polynomial. Our inductive hypothesis guarantees the existence of a factorization $Q(t)=(t-h^{(2)})\cdot \ \dots \ \cdot (t-h^{(n)})$ with $h^{(2)},\ldots,h^{(n)}\in (\rr+\epsilon\im(\hh))\cup\Q_{\D\hh}$. It immediately follows that $P(t)$ admits a factorization of the form~\eqref{eq:fact}, as desired.
\end{proof}

\begin{remark}
A $\Delta$-free motion polynomial $P(t)$ of degree $n$ has as many different factorizations as the quaternionic polynomial $\mathrm{primal}(P)(t)$ does. This follows by direct inspection in the previous proof. In particular, $P(t)$ has at most $n!$ distinct factorizations.
\end{remark}

Techniques to classify all possible factorizations of the quaternionic polynomial $\mathrm{primal}(P)(t)$ are described in~\cite[\S 3.5]{librospringer}. We present here two significant examples.

\begin{example}
Let $P(t)$ be the following $\Delta$-free motion polynomial:
\[P(t)=(t+2j-\epsilon k)\cdot (t+i+\epsilon k)\,.\]
Its primal part 
\[\mathrm{primal}(P)(t)=(t+2j)\cdot(t+i)\]
has exactly two roots, namely $-2j$ and $\frac{3}{5}i-\frac{4}{5}j$. As a consequence of~\cite[Theorem 3.24]{librospringer}, $\mathrm{primal}(P)$ admits exactly two factorizations, the second one being
\[\mathrm{primal}(P)(t)=\bigg(t-\frac{3}{5}i+\frac{4}{5}j\bigg)\cdot\bigg(t+\frac{8}{5}i+\frac{6}{5}j\bigg)\]
If we repeat the proof of Theorem~\ref{thm:factgen} starting with this second factorization of $\mathrm{primal}(P)(t)$, we find a second factorization of $P(t)$, namely
\[P(t)=\bigg(t-\frac{3}{5}i+\frac{4}{5}j-\epsilon k\bigg)\cdot\bigg(t+\frac{8}{5}i+\frac{6}{5}j+\epsilon k\bigg)\,.\]
\end{example}

\begin{example}
The $\Delta$-free motion polynomial 
\[P(t)=(t-i+\epsilon j) \cdot (t-j+\epsilon k).\]
admits a unique factorization. Indeed, its primal part 
\[\mathrm{primal}(P)(t)=(t-i)\cdot (t-j)\]
only vanishes at $\{i\}$ and has a unique factorization by~\cite[Proposition 3.23]{librospringer}.
\end{example}

\subsection{Factorization in general}

In this final subsection, we treat the problem of factorization of motion polynomials in general. In addition to the results of the previous subsection, we will make use of the next lemma.

\begin{lemma}\label{lemma:commonsol2}
Let $P(t)\in\D\hh[t]$. Suppose $y\in(\rr+\epsilon\im(\hh))\cup\Q_{\D\hh}$ is such that $\Delta_y(t)\,|\,\mathrm{primal}(P)(t)$. The set of common zeros of $\Delta_y(t)$ and $P(t)$, which is the set of zeros $h$ of $\Delta_y(t)$ such that $t-h\,|\,P(t)$, may be:
\begin{enumerate}
\item the whole zero set of $\Delta_y(t)$;
\item a tangent plane $T_{w_1}\subset\s_y$, provided $y\in\Q_{\D\hh}\setminus\rr$;
\item the empty set.
\end{enumerate}
\end{lemma}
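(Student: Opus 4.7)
The plan is to split the analysis according to the shape of the zero set of $\Delta_y(t)$ in $\D\hh$, which by Remark~\ref{rkm:delta} is either the sphere $\s_y$ (when $y\in\Q_{\D\hh}\setminus\rr$) or the ``nilpotent plane'' $y_1+\epsilon\hh$ (when $y\in\rr+\epsilon\im(\hh)$, so $y_1\in\rr$). In both situations I will use that for a monic linear polynomial $t-h$, one-step left Euclidean division in $\D\hh[t]$ gives $P(t)=(t-h)\cdot Q(t)+P(h)$, so the divisibility $t-h\,|\,P(t)$ is equivalent to $P(h)=0$.

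When $y\in\Q_{\D\hh}\setminus\rr$, set $f\vcentcolon= P|_{\Q_{\D\hh}}\in\mathcal{SR}(\Q_{\D\hh})$. By Proposition~\ref{prop:algpol} the map $P\mapsto P|_{\Q_{\D\hh}}$ is an injective *-algebra homomorphism, and the primal-part calculus of Section~\ref{sec:primal} identifies ${}^\pi\!f$ with the restriction of $\mathrm{primal}(P)$ to $\hh$. The hypothesis $\Delta_y(t)\,|\,\mathrm{primal}(P)(t)$ then reads $x_1^2-x_1 t(y)+n(y)\,|\,{}^\pi\!f(x_1)$, which is exactly the premise of the second and third statements of Corollary~\ref{cor:uniquefactor}. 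If in addition $x^2-x t(y)+n(y)\,|\,f(x)$, the third statement gives $t-w\,|\,P(t)$ for every $w\in\s_y$ (case~1); otherwise, the second statement produces either the empty set (case~3) or a tangent plane $T_{h_1}\subseteq\s_y$ (case~2).

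When $y\in\rr+\epsilon\im(\hh)$, the corollary does not apply (its hypothesis $y\in\Omega\setminus\rr$ fails for $y_1\in\rr$), and I argue directly. Perform left Euclidean division $P(t)=(t-y_1)\cdot Q(t)+r$ with $Q(t)\in\D\hh[t]$ and $r\in\D\hh$; because $t-y_1\in\rr[t]\subset\D\rr[t]$, taking primal parts is compatible with the product and yields $\mathrm{primal}(P)(t)=(t-y_1)\cdot\mathrm{primal}(Q)(t)+r_1$. The hypothesis $(t-y_1)^2\,|\,\mathrm{primal}(P)(t)$ then forces $r_1=0$ (so $r=\epsilon r_2$ for some $r_2\in\hh$) and $\mathrm{primal}(Q)(y_1)=0$. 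For any $v\in\hh$, Remark~\ref{rmk:primalevaluation} gives $\pi\bigl(Q(y_1+\epsilon v)\bigr)=\mathrm{primal}(Q)(y_1)=0$, hence $Q(y_1+\epsilon v)\in\epsilon\hh$; applying Remark~\ref{rmk:evaluation} to the central factor $t-y_1$ and using $\epsilon^2=0$ I obtain
\[P(y_1+\epsilon v)=\epsilon v\cdot Q(y_1+\epsilon v)+\epsilon r_2=\epsilon r_2,\]
independent of $v$. The common-zero set of $\Delta_y(t)$ and $P(t)$ is therefore all of $y_1+\epsilon\hh$ when $r_2=0$ (case~1) and empty when $r_2\neq 0$ (case~3); case~2 cannot arise because the zero set of $\Delta_y(t)$ is not a sphere.

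I expect the main obstacle to be precisely this second case: since the zeros of $\Delta_y(t)$ lie outside $\Q_{\D\hh}$ when $y_1\in\rr$, the slice-function machinery of Section~\ref{sec:zeros} and Corollary~\ref{cor:uniquefactor} cannot be invoked and one is forced into the explicit polynomial-division computation above. The payoff is the striking rigidity that $P$ takes the same value $\epsilon r_2$ at every point of the nilpotent plane $y_1+\epsilon\hh$, which collapses the possible trichotomy to the desired dichotomy.
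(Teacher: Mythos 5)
Your proof is correct and follows essentially the same route as the paper's: the case $y\in\Q_{\D\hh}\setminus\rr$ is delegated to Corollary~\ref{cor:uniquefactor}, and the case $y\in\rr+\epsilon\im(\hh)$ is handled by Euclidean division combined with the primal-part evaluation remarks. The only cosmetic difference is that the paper divides by the quadratic $\Delta_y(t)$ and analyzes the degree-at-most-one remainder, whereas you divide by the linear central factor $t-y_1$ and observe that $P$ is constant, equal to $\epsilon r_2$, on the plane $y_1+\epsilon\hh$; the two computations are equivalent.
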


\begin{proof}
If $y\in\Q_{\D\hh}\setminus\rr$, then the thesis immediately follows from Corollary~\ref{cor:uniquefactor}. Let us therefore suppose that $y\in\rr+\epsilon\im(\hh)$, whence the zero set of $\Delta_y(t)=(t-y_1)^2$ is $y_1+\epsilon\hh$.

Let us divide $P(t)$ by $\Delta_y(t)$: $P(t)=\Delta_y(t)Q(t)+R(t)$ for some $Q(t),R(t)\in\D\hh[t]$ with $\deg R(t)\leq1$. If $R(t)$ is constant then we are either in case {\it 1.} or in case {\it 3.} Suppose, instead, $R(t)=ta+b$ with $a\neq0$.
The hypothesis $\Delta_y(t)\,|\,\mathrm{primal}(P)(t)$ implies that $\mathrm{primal}(R)(t)=0$. Thus, $R(t)=t\epsilon a_2+\epsilon b_2$ with $a_2\in\hh^*$ and
\[R(t)=(t+b_2a_2^{-1})\cdot\epsilon a_2=(t+b_2a_2^{-1})\epsilon a_2\]
has a unique quaternionic zero, namely $-b_2a_2^{-1}$. If this zero coincides with $y_1$, then the zero set of $R(t)=(t-y_1)\epsilon a_2$ is $y_1+\epsilon\hh$. Moreover, the equality $R(t)=(t-h)\epsilon a_2$ holds for all $h\in y_1+\epsilon\hh$, whence $t-h\,|\,P(t)$, and we are in case {\it 1.} If, instead, $-b_2a_2^{-1}$ does not coincide with $y_1$ then, for every $h\in y_1+\epsilon\hh$, the polynomial $R(t)$ does not vanish at $h$ and $t-h{\not|}\ R(t)$. Thus, we are in case {\it 3.}
\end{proof}

\begin{example}
The motion polynomials
\[P_1(t)=t^2+1,\quad P_2(t)=t^2+1+(t-i)\epsilon j,\quad P_3(t)=t^2+1+\epsilon j\]
have
\[\mathrm{primal}(P_\ell)(t)=t^2+1=\Delta_i(t),\quad N(P_\ell)(t)=(t^2+1)^2\]
for all $\ell\in\{1,2,3\}$. The zero sets of $P_1(t),P_2(t),P_3(t)$ are, respectively, $\s,T_i,\emptyset$.
\end{example}

We are now ready for a general description of motion polynomials. To this end, the next definition will be useful

\begin{definition}
A motion polynomial $P(t)\in\D\hh[t]$ is \textit{$\Delta$-covered} if, for each zero $y_1\in\hh$ of $\mathrm{primal}(P)(t)$, it holds
\[\Delta_{y_1}(t)\,|\,\mathrm{primal}(P)(t)\,.\]
\end{definition}

\begin{proposition}
Let $P(t)\in\D\hh[t]$ be a monic motion polynomial of degree $n>0$. Then 
\[P(t)=Q(t)\cdot R(t)\,,\]
where $Q(t)$ is a $\Delta$-free motion polynomial of degree $q\leq n$ and $R(t)$ is a $\Delta$-covered motion polynomial. The polynomial $Q(t)$ admits at least $1$ and at most $q!$ factorizations of the form
\[Q(t)=\left(t-h^{(1)}\right)\cdot\left(t-h^{(2)}\right)\cdot \ \dots \ \cdot \left(t-h^{(q)}\right)\quad \mathrm{with\ }h^{(1)},\ldots,h^{(q)}\in(\rr+\epsilon\im(\hh))\cup\Q_{\D\hh}\,,\]
each corresponding to exactly one factorization $\mathrm{primal}(Q)(t)=\left(t-h_1^{(1)}\right)\cdot\left(t-h_1^{(2)}\right)\cdot \ \dots \ \cdot \left(t-h_1^{(q)}\right)$ of its primal part.
The polynomial $R(t)$ admits either infinitely many or no factorization of the form
\[R(t)=\left(t-h^{(q+1)}\right)\cdot\left(t-h^{(q+2)}\right)\cdot \ \dots \ \cdot \left(t-h^{(n)}\right)\quad \mathrm{with\ }h^{(q+1)},\ldots,h^{(n)}\in(\rr+\epsilon\im(\hh))\cup\Q_{\D\hh}\,.\]
If it does, then $\Delta_{h_1^{(\ell)}}\,|\,\mathrm{primal}(R)(t)$ for all $\ell\in\{q+1,q+2,\ldots,n\}$.
\end{proposition}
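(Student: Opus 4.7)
The plan is to construct the decomposition $P = Q \cdot R$ by induction on the degree $n$, and then read off the factorization structure of $Q$ from Theorem~\ref{thm:factgen} and the remark following it, while handling $R$ via Lemma~\ref{lemma:commonsol2}.

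For the induction: if $\mathrm{primal}(P)$ is already $\Delta$-covered, set $Q = 1$ and $R = P$. Otherwise, the Quaternionic Fundamental Theorem of Algebra produces $y_1 \in \hh$ with $t - y_1 \mid \mathrm{primal}(P)$ but $\Delta_{y_1} \nmid \mathrm{primal}(P)$, and Lemma~\ref{lemma:commonsol} yields $P(t) = (t - h^{(1)}) \cdot P_1(t)$ with $h^{(1)} \in (\rr + \epsilon \im(\hh)) \cup \Q_{\D\hh}$. Since $N(P_1) = N(P) / \Delta_{h^{(1)}}(t) \in \rr[t]$ and $P_1$ is monic, $P_1$ is again a motion polynomial of degree $n - 1$, so the inductive hypothesis gives $P_1 = Q_1 \cdot R_1$. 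I then set $Q = (t - h^{(1)}) \cdot Q_1$ and $R = R_1$.

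The main obstacle is showing that $Q$ is $\Delta$-free, since prepending $(t - h^{(1)})$ could in principle create a new spherical factor. Suppose $\Delta_{w_1}$ divides $\mathrm{primal}(Q) = (t - h_1^{(1)}) \cdot \mathrm{primal}(Q_1)$. If $h_1^{(1)} \in \s_{w_1}$, then $\Delta_{w_1} = \Delta_{y_1}$, and left-cancellation in the domain $\hh[t]$ gives $\bigl(t - (h_1^{(1)})^c\bigr) \mid \mathrm{primal}(Q_1) \mid \mathrm{primal}(P_1)$, so that $\Delta_{y_1} \mid \mathrm{primal}(P)$, contradicting the choice of $y_1$. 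If instead $h_1^{(1)} \notin \s_{w_1}$, I apply the quaternionic product rule $(f \cdot g)(w) = f(w) \cdot g(\con(f(w), w))$ for each $w \in \s_{w_1}$: since $w - h_1^{(1)}$ is invertible, $\mathrm{primal}(Q_1)$ must vanish at $\con(w - h_1^{(1)}, w) \in \s_{w_1}$. A short computation shows that the map $w \mapsto \con(w - h_1^{(1)}, w)$ is constant exactly when $h_1^{(1)} \in \s_{w_1}$, so here it is non-constant and continuous on the connected sphere $\s_{w_1}$, hence has infinite image, forcing $\mathrm{primal}(Q_1)$ to vanish on all of $\s_{w_1}$ (by the trichotomy of zero sets of quaternionic polynomials on a sphere); this contradicts the inductive $\Delta$-freeness of $Q_1$. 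The residual real case $w_1 \in \rr$ is handled by iterated evaluation of the divisibility $(t - w_1)^2 \mid (t - h_1^{(1)}) \cdot \mathrm{primal}(Q_1)$ at $t = w_1$.

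Once $Q$ is $\Delta$-free, Theorem~\ref{thm:factgen} together with the remark after it produces a bijection between factorizations of $Q$ and of $\mathrm{primal}(Q)$, bounding the count by $q!$ via the standard upper bound for factorizations of a quaternionic polynomial of degree $q$. For $R$, suppose a factorization $R = (t - h^{(q+1)}) \cdots (t - h^{(n)})$ exists; each $h_1^{(\ell)}$ is a zero of $\mathrm{primal}(R)$, and the $\Delta$-covered hypothesis gives $\Delta_{h_1^{(\ell)}} \mid \mathrm{primal}(R)$, which is the final assertion of the proposition. Applying Lemma~\ref{lemma:commonsol2} to the leading factor then places the admissible replacements for $h^{(q+1)}$ in case 1 (the full zero set of $\Delta_{h_1^{(q+1)}}$) or case 2 (a tangent plane), both infinite, so each existing factorization deforms into infinitely many others.
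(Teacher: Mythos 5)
Your construction of the splitting $P=Q\cdot R$ is correct but takes a genuinely different route from the paper's. The paper first uses the factorization techniques for quaternionic polynomials of \cite[\S 3.5]{librospringer} to choose a factorization of $\mathrm{primal}(P)$ in which all linear factors whose spheres divide $\mathrm{primal}(P)$ are pushed to the right, and then peels off the first $q$ factors by $q$ applications of Lemma~\ref{lemma:commonsol}; $\Delta$-freeness of $Q$ and $\Delta$-coveredness of $R$ are read off from that ordering. You instead induct on the degree, peeling one factor whenever $P$ is not $\Delta$-covered, and you must then show by hand that $Q=(t-h^{(1)})\cdot Q_1$ remains $\Delta$-free. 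Your case analysis does this correctly: in the same-sphere case, cancelling $t-h_1^{(1)}$ in the domain $\hh[t]$ gives $t-(h_1^{(1)})^c\,|\,\mathrm{primal}(Q_1)$ and hence $\Delta_{y_1}\,|\,\mathrm{primal}(P)$, a contradiction; in the different-sphere case, the map $w\mapsto\con(w-h_1^{(1)},w)$ is constant on $\s_{w_1}$ precisely when $h_1^{(1)}$ annihilates the characteristic polynomial of the sphere, so its nonconstancy forces $\mathrm{primal}(Q_1)$ to vanish on infinitely many points of $\s_{w_1}$, hence on all of it; the real case follows by evaluating twice at $w_1$. This buys independence from the reordering machinery of \cite{librospringer} at the price of a longer verification. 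The bookkeeping you leave implicit (that $\Delta_{y_1}\,|\,N(P)$ so Lemma~\ref{lemma:commonsol} applies, and that $P_1$ and $Q$ are again monic motion polynomials) is routine and is spelled out in the proof of Theorem~\ref{thm:factgen}. The count of factorizations of $Q$ is handled exactly as in the paper, via Theorem~\ref{thm:factgen} and the remark following it.

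One step is wrong as written: for the final divisibility claim you assert that each $h_1^{(\ell)}$, $\ell\geq q+2$, is a zero of $\mathrm{primal}(R)$. In a noncommutative product only the left root of the leftmost factor is guaranteed to be a zero; for instance $j$ is not a zero of $(t-i)\cdot(t-j)=t^2-t(i+j)+k$. The correct statement, which is what the paper uses and which suffices, is weaker: $\Delta_{h_1^{(\ell)}}$ divides the norm of $\mathrm{primal}(R)$, hence the zero set of $\mathrm{primal}(R)$ meets $\s_{h_1^{(\ell)}}\cap\hh$ (by \cite[Proposition 3.9]{librospringer}), and applying the $\Delta$-covered hypothesis to that zero yields $\Delta_{h_1^{(\ell)}}\,|\,\mathrm{primal}(R)$. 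Finally, your argument for ``infinitely many factorizations'' (deform the leading factor via Lemma~\ref{lemma:commonsol2}) is at the same level of detail as the paper's one-line appeal to that lemma: strictly, one should also explain why each admissible replacement of the leading root extends to a complete factorization of the required form; since the paper does not elaborate on this either, I only flag it rather than count it against you.
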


\begin{proof}
Consider the quaternionic polynomial $\mathrm{primal}(P)(t)$: thanks to the techniques described in~\cite[\S 3.5]{librospringer}, it is possible to find a factorization
\[\mathrm{primal}(P)(t)=\left(t-p_1^{(1)}\right)\cdot\left(t-p_1^{(2)}\right)\cdot \ \dots \ \cdot \left(t-p_1^{(n)}\right)\]
with $p_1^{(1)},\ldots,p_1^{(n)}\in\hh$ such that, for some $q\leq n$,
\begin{align*}
&\Delta_{p_1^{(\ell)}}{\not|}\ \mathrm{primal}(P)(t)\mathrm{\ for\ }\ell\in\{1,2,\ldots,q\}\\
&\Delta_{p_1^{(\ell)}}\,|\,\mathrm{primal}(P)(t)\mathrm{\ for\ }\ell\in\{q+1,q+2,\ldots,n\}\,.
\end{align*}
Lemma~\ref{lemma:commonsol} applies to $P(t)$ at $p_1^{(1)}$, so that 
\[P(t)=(t-h^{(1)})\cdot P^{(1)}(t)\]
for some $h^{(1)}\in(\rr+\epsilon\im(\hh))\cup\Q_{\D\hh}$ with $h_1^{(1)}=p_1^{(1)}$ and for some $P^{(1)}(t)\in\D\hh[t]$ with
\[\mathrm{primal}(P^{(1)})(t)=\left(t-p_1^{(2)}\right)\cdot \ \dots \ \cdot \left(t-p_1^{(n)}\right)\,.\]
After $q-1$ further applications of Lemma~\ref{lemma:commonsol}, we find that
\[P(t)=\left(t-h^{(1)}\right)\cdot\left(t-h^{(2)}\right)\cdot \ \dots \ \cdot \left(t-h^{(q)}\right)\cdot P^{(q)}(t)\]
with $h^{(1)},\ldots,h^{(q)}\in(\rr+\epsilon\im(\hh))\cup\Q_{\D\hh}$ and with
\[\mathrm{primal}(P^{(q)})(t)=\left(t-p_1^{(q+1)}\right)\cdot \ \dots \ \cdot \left(t-p_1^{(n)}\right)\,.\]
If we set $R(t):=P^{(q)}(t)$, then the first and second statements are proven. 
To prove the third statement, we observe that if $R(t)$ admits a factorization of the form
\[R(t)=\left(t-h^{(q+1)}\right)\cdot\left(t-h^{(q+2)}\right)\cdot \ \dots \ \cdot \left(t-h^{(n)}\right)\quad \mathrm{with\ }h^{(q+1)},\ldots,h^{(n)}\in(\rr+\epsilon\im(\hh))\cup\Q_{\D\hh}\,,\]
then it admits infinitely many because of Lemma~\ref{lemma:commonsol2}. Furthermore, the previous equality implies that
\[\mathrm{primal}(R)(t)=\left(t-h_1^{(q+1)}\right)\cdot\left(t-h_1^{(q+2)}\right)\cdot \ \dots \ \cdot \left(t-h_1^{(n)}\right)\,.\]
As a consequence, the zero set of $\mathrm{primal}(R)(t)$ intersects $\s_{h_1^{(\ell)}}\cap\hh$ for all $\ell\in\{q+1,q+2,\ldots,n\}$. Since $R(t)$ is $\Delta$-covered by construction, it follows that $\Delta_{h_1^{(\ell)}}\,|\,\mathrm{primal}(R)(t)$ for all $\ell\in\{q+1,q+2,\ldots,n\}$.
\end{proof}

\begin{example}
The motion polynomial
\[P(t)=t^3+t^2(i+j+\epsilon i)-t\epsilon(1+k)\]
has $\mathrm{primal}(P)(t)=t^3+t^2(i+j)=(t+i+j)\cdot t^2$ and $N(P)(t)=(t^2+2) \cdot t^4$. It factorizes as
\[P(t)=(t+i+j)\cdot R(t),\quad R(t)=t^2+t\epsilon i=(t+\epsilon a)\cdot(t+\epsilon (i-a)) \mathit{\ for\ all\ } a\in\im(\hh)\,.\]
\end{example}

\begin{example}
The motion polynomial
\[P(t)=(t^2+1)^2+(t-i)\epsilon j\]
has $\mathrm{primal}(P)(t)=(t^2+1)^2$ and $N(P)(t)=(t^2+1)^4$. It holds
\[P(t)=(t-i)\cdot R(t),\quad R(t)=(t^2+1)(t+i)+\epsilon j\]
and $R(t)$ never vanishes in $(\rr+\epsilon\im(\hh))\cup\Q_{\D\hh}$.
\end{example}

%%%%%%%%%%%%%%%%%%%%%%%%%%%%%

%% BIBLIOGRAPHY

\vfill

%%%%%%%%%%%%%%%%%%%%%%%%%%%%%

\end{document}